\documentclass[11pt]{amsart}
\usepackage{amsmath}
\usepackage{amssymb}
\usepackage{amscd}
\usepackage{enumerate}
\usepackage{verbatim}
\usepackage{color}
\usepackage{bm}
\usepackage{easybmat} %This is for making nice matrices
\usepackage{fullpage}

%\input xy
%\xyoption{all}

%%%%%%%%% Theorem-like environments %%%%%%%%%%%
%
\newtheorem{Thm}{Theorem}[section]
\newtheorem{Lemma}[Thm]{Lemma}
\newtheorem{Cor}[Thm]{Corollary}
\newtheorem{Prop}[Thm]{Proposition}
\theoremstyle{definition}
\newtheorem{Def}[Thm]{Definition}
\newtheorem{Example}[Thm]{Example}

\newtheorem{Rmk}[Thm]{Remark}

\numberwithin{equation}{section}

%       Math definitions

\def\bbz{\mathbb{Z}}
\def\bbc{\mathbb{C}}
\def\bbr{\mathbb{R}}
\def\bbz{\mathbb{Z}}
\def\bbn{\mathbb{N}}
\def\bbq{\mathbb{Q}}

\def\bbf{\mathbb{F}}

\def\lra{\longrightarrow}

\def\x{\times}
\def\bs{\backslash}

\def\aut{\mathrm{Aut}}
\def\aff{\mathrm{Aff}}

\def\fix{\mathrm{Fix}}

\def\id{\mathrm{id}}
\def\tr{\mathrm{tr\,}}

\def\R{(\mathrm{R})}
\def\Ad{\mathrm{Ad}}
\def\ad{\mathrm{ad}}
\def\sp{\mathrm{sp}}

\def\frakS{\mathfrak{S}}

\def\Sol{\mathrm{Sol}}

\def\sgn{\mathrm{sign~\!}}

\def\R{\mathrm{(R)}}

\def\frakG{\mathfrak{G}}

\def\calS{\mathcal{S}}

\def\HPer{\mathrm{HPer}}
\def\DH{\mathrm{DH}}

\def\lcm{\mathrm{lcm}}

\def\calA{\mathcal{A}}

\def\bb{|\hspace{-1pt}|}

\def\calO{\mathcal{O}}
\def\Per{\mathrm{Per}}

\def\EP{\mathrm{E}P}
\def\DA{\mathrm{DA}}
\def\DP{\mathrm{DP}}
\def\ds{\displaystyle}

%SquareBox
\def\boxit#1{\vbox{\hrule\hbox{\vrule\kern3pt
     \vbox{\kern3pt#1\kern3pt}\kern3pt\vrule}\hrule}}

%Coloring texts

\begin{document}
\title[The Nielsen numbers of iterations of maps on infra-solvmanifolds]
{The Nielsen numbers of iterations of maps on infra-solvmanifolds of type $\R$ and periodic orbits}
\author{Alexander Fel'shtyn}
\address{Instytut Matematyki, Uniwersytet Szczecinski,
ul. Wielkopolska 15, 70-451 Szczecin, Poland}
\email{fels@wmf.univ.szczecin.pl}

\author{Jong Bum Lee}
\address{Department of mathematics, Sogang University, Seoul 121-742, KOREA}
\email{jlee@sogang.ac.kr}

\thanks{The first-named  author is indebted to the Max-Planck-Institute for Mathematics(Bonn) for the support and hospitality.
The second-named author is
supported in part by Basic Science Researcher Program through the National Research Foundation of Korea(NRF) funded by the Ministry of Education (No.~\!2013R1A1A2058693)}

\subjclass[2000]{37C25, 55M20}%
\keywords{Infra-solvmanifold, Nielsen number, Nielsen zeta function, periodic orbit}

\abstract
We study the asymptotic behavior of the sequence of the Nielsen numbers $\{N(f^k)\}$, the {essential periodic orbits} of $f$ and the homotopy minimal periods of $f$ by using the Nielsen theory of maps $f$ on infra-solvmanifolds of type $\R$. We give a linear lower bound for the number of essential periodic orbits of such a map, which sharpens well-known results of Shub and Sullivan for periodic points and of Babenko and Bogaty\v{\i} for periodic orbits. We also verify that a constant multiple of infinitely many prime numbers occur as homotopy minimal periods of such a map.
\endabstract
\date{\today}

\maketitle

\tableofcontents

\section{Introduction}

Let $f:X\to X$ be a map on a finite complex $X$. A point $x\in X$ is a fixed point of $f$ if $f(x)=x$; a periodic point of $f$ with period $n$ if $f^n(x)=x$. The smallest period of $x$ is called the minimal period. We will use the following notations:
\begin{align*}
&\fix(f)=\{x\in X\mid f(x)=x\},\\
&\Per(f)=\text{the set of all minimal periods of $f$},\\
&P_n(f)=\text{the set of all periodic points of $f$ with minimal period $n$},\\
&{\HPer(f)=\bigcap_{g\simeq f}\left\{n\in\bbn\mid P_n(g)\ne\emptyset\right\}}\\
&\hspace{1.4cm}=\text{the set of all homotopy minimal periods of $f$}.
\end{align*}
Let $p:\tilde{X}\rightarrow X$ be the universal cover of $X$ and $\tilde{f}:\tilde{X}\rightarrow \tilde{X}$ a lift of $f$, {i.e.,} $p\circ\tilde{f}=f\circ p$. Two lifts $\tilde{f}$ and $\tilde{f}^\prime$ are called \emph{conjugate} if there is a $\gamma\in\Gamma\cong\pi_1(X)$ such that $\tilde{f}^\prime = \gamma\circ\tilde{f}\circ\gamma^{-1}$. The subset $p(\fix(\tilde{f}))\subset \fix(f)$ is called the \emph{fixed point class} of $f$ determined by the lifting class $[\tilde{f}]$. A fixed point class is called \emph{essential} if its index is nonzero. The number of essential fixed point classes is called the \emph{Nielsen number} of $f$, denoted by $N(f)$ \cite{Jiang}.

The Nielsen number is always finite and is a homotopy invariant lower bound for the number of fixed points of $f$. In the category of compact, connected polyhedra the Nielsen number of a map is, apart from in certain exceptional cases, equal to the least number of fixed points of maps with the same homotopy type as $f$.

From the dynamical point of view, it is natural to consider the Nielsen numbers $N(f^k)$ of all iterations of $f$ simultaneously. For example, N. Ivanov \cite{I} introduced the notion of the asymptotic Nielsen number, measuring the growth of the sequence $N(f^k)$ and found the basic relation between the topological entropy of $f$ and the asymptotic Nielsen number. Later on, it was suggested in \cite{Fel84, PilFel85, Fel88, Fel00} to arrange the Nielsen numbers $N(f^k)$ of all iterations of $f$ into the Nielsen zeta function
$$
N_f(z)=\exp\left(\sum_{k=1}^\infty\frac{N(f^k)}{k}z^k\right).
$$
The Nielsen zeta function $N_f(z)$ is a nonabelian analogue of the Lefschetz zeta function
$$
L_f(z)=\exp\left(\sum_{k=1}^\infty\frac{L(f^k)}{k}z^k\right),
$$
where
\begin{equation*}%\label{Lef}
 L(f^n) := \sum_{k=0}^{\dim X} (-1)^k\ \tr\!\left\{f_{*k}^n:H_k(X;\bbq)\to H_k(X;\bbq)\right\}
\end{equation*}
is the Lefschetz number of the iterate $f^n$ of $f$.

Nice analytical properties of $N_f(z)$ \cite{Fel00} indicate that the numbers $N(f^k)$ are closely interconnected.
The manifestation of this are Gauss congruences
\begin{align*}%\label{DN}
\sum_{d\mid k}\mu\!\left(\frac{k}{d}\right)N(f^d)\equiv0\mod{k},
\end{align*}
for any $ k>0$,  where $f$ is  a map on an infra-solvmanifold of type $\R$ \cite{FL}.

The fundamental invariants of $f$ used in the study of periodic points are the Lefschetz numbers $L(f^k)$, and their algebraic combinations, the Nielsen numbers $N(f^k)$ and the Nielsen-Jiang periodic numbers $NP_n(f)$ and $N\Phi_n(f)$.

The study of periodic points by using the Lefschetz theory has been done extensively by many authors in the literatures such as \cite{Jiang}, \cite{Dold}, \cite{BaBo}, \cite{JM}, \cite{Matsuoka}. A natural question is to know how much information we can get about the set of essential periodic points of $f$ or about the set of (homotopy) minimal periods of $f$ from the study of the sequence $\{N(f^k)\}$ of the Nielsen numbers of iterations of $f$. Even though the Lefschetz numbers $L(f^k)$ and the Nielsen numbers $N(f^k)$ are different from the nature and not equal for maps $f$ on infra-solvmanifolds of type $\R$, we can utilize the arguments employed mainly in \cite{BaBo} and \cite[Chap.~III]{JM} for the Lefschetz numbers of iterations  
and thereby will study the asymptotic behavior of the sequence $\{N(f^k)\}$, the {essential periodic orbits} of $f$ and the homotopy minimal periods of $f$ by using the Nielsen theory of maps $f$ on infra-solvmanifolds of type $\R$. We give a linear lower bound for the number of essential periodic orbits of such a map (Theorem~\ref{BaBo4.2}), which sharpens well-known results of Shub and Sullivan for periodic points and of Babenko and Bogaty\v{\i} for periodic orbits. We also verify that a constant multiple of infinitely many prime numbers occur as homotopy minimal periods of such a map (Theorem~\ref{3.2.48}).

\section{Nielsen numbers $N(f^k)$}

Let $S$ be a connected and simply connected solvable Lie group. A discrete subgroup $\Gamma$ of $S$ is a {\it lattice} of $S$ if $\Gamma\bs{S}$ is compact, and in this case, we say that the quotient space $\Gamma\bs{S}$ is a {\it special} solvmanifold. Let $\Pi\subset\aff(S)$ be a torsion-free finite extension of the lattice $\Gamma=\Pi\cap S$ of $S$. That is, $\Pi$ fits the short exact sequence
$$
\CD
1@>>>S@>>>\aff(S)@>>>\aut(S)@>>>1\\
@.@AAA@AAA@AAA\\
1@>>>\Gamma@>>>\Pi@>>>\Pi/\Gamma@>>>1
\endCD
$$
Then $\Pi$ acts freely on $S$ and the manifold $\Pi\bs{S}$ is called an {\it infra-solvmanifold}. The finite group $\Phi=\Pi/\Gamma$ is the \emph{holonomy group} of $\Pi$ or $\Pi\bs{S}$. It sits naturally in $\aut(S)$. Thus every infra-solvmanifold $\Pi\bs{S}$ is finitely covered by the special solvmanifold $\Gamma\bs{S}$. An infra-solvmanifold $M=\Pi\bs{S}$ is of type $\R$ if $S$ is of \emph{type $\R$} or \emph{completely solvable}, i.e., if $\ad\, X:\frakS\to\frakS$ has only real eigenvalues for all $X$ in the Lie algebra $\frakS$ of $S$.

Recall that a connected solvable Lie group $S$ contains a sequence of closed subgroups
$$
1=N_1\subset\cdots\subset N_k=S
$$
such that $N_i$ is normal in $N_{i+1}$ and $N_{i+1}/N_i\cong\bbr$ or $N_{i+1}/N_i\cong S^1$. If the groups $N_1,\cdots,N_k$ are normal in $S$, the group $S$ is called \emph{supersolvable}. The supersolvable Lie groups are the Lie groups of type $\R$.

\begin{Lemma}[{\cite[Lemma~4.1]{Wilking}, \cite[Proposition~2.1]{JL15}}]\label{lemma:supersolvability}
For a connected Lie group $S$, the following are equivalent:
\begin{enumerate}
\item[$(1)$] $S$ is supersolvable.
\item[$(2)$] All elements of $\Ad(S)$ have only positive eigenvalues.
\item[$(3)$] $S$ is of type $\R$.
\end{enumerate}
\end{Lemma}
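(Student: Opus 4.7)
The plan is to prove the cycle $(1) \Rightarrow (2) \Rightarrow (3) \Rightarrow (1)$, using Lie's theorem, the identity $\Ad(\exp X) = \exp(\ad X)$, and the connectedness of $S$ as the main tools. Two of the three implications are essentially formal, and the real work will be in the closure/descent argument needed for $(3) \Rightarrow (1)$.

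For $(1) \Rightarrow (2)$, the supersolvable chain $1 = N_1 \subset \cdots \subset N_k = S$ passes to a complete flag of ideals $\frakn_i$ of $\frakS$, and the normality of each $N_i$ in $S$ forces $\Ad(g)$ to preserve this flag, hence to be upper triangular in an adapted basis. The diagonal entries are the actions of $\Ad(g)$ on the one-dimensional quotients $\frakn_{i+1}/\frakn_i$; they define continuous homomorphisms $\chi_i:S \to \bbr^\ast$ with $\chi_i(e)=1$, and connectedness of $S$ forces $\chi_i(g) > 0$, so every eigenvalue of $\Ad(g)$ is positive. For $(2) \Rightarrow (3)$, fix $X \in \frakS$ and observe that the eigenvalues of $\Ad(\exp tX) = \exp(t\,\ad X)$ are $e^{t\lambda}$ as $\lambda$ runs over the eigenvalues of $\ad X$; if some $\lambda = a+ib$ had $b \neq 0$, then $e^{t\lambda} = e^{ta}(\cos tb + i\sin tb)$ would fail to be a positive real for small $t > 0$, contradicting (2). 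Hence every eigenvalue of $\ad X$ is real.

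For $(3) \Rightarrow (1)$, complete solvability together with Lie's theorem applied to the adjoint representation produces a complete flag of ideals $0 = \mathfrak{a}_0 \subset \cdots \subset \mathfrak{a}_n = \frakS$ with one-dimensional real quotients. I would exponentiate this flag to a chain of connected normal Lie subgroups $A_i \subset S$; each quotient $A_{i+1}/A_i$ is then a one-parameter group, and hence isomorphic to $\bbr$ or to $S^1$. The main obstacle will be guaranteeing closedness of each $A_i$: this is immediate when $S$ is simply connected, since $\exp:\frakS \to S$ is a diffeomorphism for completely solvable simply connected $S$ and every subalgebra integrates to a closed subgroup; in general one must lift to the universal cover $\tilde S$, produce the closed normal chain there, and push down along the covering projection, using that $\pi_1(S)$ sits as a discrete central subgroup of $\tilde S$ to verify that the images remain closed. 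Once this is done, the resulting chain of closed normal subgroups of $S$ with $\bbr$ or $S^1$ quotients witnesses supersolvability.
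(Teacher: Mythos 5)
The paper does not give its own proof of this lemma; it simply cites \cite[Lemma~4.1]{Wilking} and \cite[Proposition~2.1]{JL15}. So there is nothing internal to compare against, and I can only assess your proposal on its own terms.

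Your arguments for $(1)\Rightarrow(2)$ and $(2)\Rightarrow(3)$ are correct and complete as sketched. For $(1)\Rightarrow(2)$: normality of $N_i$ in $S$ plus connectedness of $S$ gives that each $\frakn_i$ is an $\ad(\frakS)$-invariant ideal; the one-dimensional quotients produce continuous characters $\chi_i:S\to\bbr^\ast$, and connectedness forces $\chi_i(S)\subset\bbr^+$, so the (real!) eigenvalues of $\Ad(g)$ are the $\chi_i(g)>0$. For $(2)\Rightarrow(3)$: differentiating $\Ad(\exp tX)=\exp(t\,\ad X)$ at $t=0$ is exactly the right move, and a non-real eigenvalue $a+ib$, $b\ne0$, of $\ad X$ produces a genuinely non-real eigenvalue $e^{ta}e^{itb}$ of $\Ad(\exp tX)$ for small $t>0$ with $tb\notin\pi\bbz$, contradicting (2).

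The gap is in $(3)\Rightarrow(1)$, exactly where you flagged it. In the non-simply-connected case you propose to build the flag of closed connected normal subgroups $\tilde A_i\subset\tilde S$ and then push down along $p:\tilde S\to S$, claiming that the images stay closed because $\Gamma=\pi_1(S)$ is discrete and central. That reasoning is not sufficient: the image $p(\tilde A_i)=\tilde A_i\Gamma/\Gamma$ is closed in $S$ if and only if $\tilde A_i\Gamma$ is closed in $\tilde S$, and this can fail even with $\Gamma$ discrete central (irrational winding is the standard obstruction). What saves the argument is that you are free to \emph{choose} the flag: since $\Gamma$ is central, $\Gamma\subset Z(\tilde S)=\exp\,\mathfrak z(\frakS)\cong\bbr^k$, and any vector subspace of $\mathfrak z(\frakS)$ is an ideal of $\frakS$. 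So pick generators $\gamma_1,\dots,\gamma_m$ of $\Gamma$, set $\mathfrak a_i=\bbr\text{-span}(\log\gamma_1,\dots,\log\gamma_i)$ for $i\le m$, and then extend $\mathfrak a_m$ to a full flag of ideals of $\frakS$ using the real Lie theorem. With this adapted flag, for $i\ge m$ one has $\Gamma\subset\tilde A_i$ so $\tilde A_i\Gamma=\tilde A_i$ is closed, while for $i<m$ the group $\Gamma$ is a lattice in $\tilde A_m\cong\bbr^m$ whose image in $\tilde A_m/\tilde A_i\cong\bbr^{m-i}$ is discrete, so $\tilde A_i\Gamma$ is again closed. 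Without adapting the flag to $\Gamma$ in this way, the ``push down'' step does not go through, so your proposal as written is missing the key idea that makes $(3)\Rightarrow(1)$ work beyond the simply connected case.
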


In this paper, we shall assume that $f:M\to M$ is a continuous map on an infra-solvmanifold $M=\Pi\bs{S}$ of type $\R$ with holonomy group $\Phi$. Then $f$ has an affine homotopy lift $(d,D):S\to S$, and so $f^k$ has an affine homotopy lift $(d,D)^k=(d',D^k)$ where $d'=dD(d)\cdots D^{k-1}(d)$.
By the averaging formulas for the Lefschetz and the Nielsen numbers $L(f^k)$ and $N(f^k)$ \cite[Theorem~4.2]{LL-Nagoya} and \cite{HL-Nagoya}, we have
\begin{align}
L(f^k)=\frac{1}{\#\Phi}\sum_{A\in\Phi}\det(I-A_*D_*^k),\quad
N(f^k)=\frac{1}{\#\Phi}\sum_{A\in\Phi}|\det(I-A_*D_*^k)|.\tag{AV}
\end{align}

Concerning the Nielsen numbers $N(f^k)$ of all iterates of $f$, we recall the following results.
These results about $N(f^k)$ are crucial in our discussion.

\begin{Thm}[{\cite[Theorem~11.4]{FL}}]\label{thm:Dold}
Let  $f:M\to M$ be a map on an infra-solvmanifold $M$ of type $\R$. Then
\begin{align}\label{DN}
\sum_{d\mid k}\mu\!\left(\frac{k}{d}\right)N(f^d)\equiv0\mod{k}\tag{DN}
\end{align}
for all $k>0$.
\end{Thm}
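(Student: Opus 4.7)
The plan is to start from the averaging formula~(AV):
\[
\#\Phi\cdot N(f^k)=\sum_{A\in\Phi}\bigl|\det(I-A_*D_*^k)\bigr|,
\]
and then combine the supersolvable structure of $S$ (guaranteed by Lemma~\ref{lemma:supersolvability}) with the classical M\"obius--Gauss congruence
\[
\sum_{d\mid k}\mu(k/d)\,\alpha^d\equiv0\pmod{k},\qquad\alpha\text{ an algebraic integer},
\]
to obtain~\eqref{DN}.

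My first step would be to analyze each term $\det(I-A_*D_*^k)$ using a supersolvable flag of ideals in $\frakS$. Since $\Phi$ is finite and conjugation by $D$ permutes $\Phi$, the flag can be refined so that it is preserved simultaneously by $D_*$ and by every $A_*$. The matrix $A_*D_*^k$ is then upper-triangularizable (over $\bbc$, and over $\bbr$ whenever the eigenvalues of $D_*$ are real, which is the generic type~$\R$ situation on the graded pieces), with eigenvalues of the form $\mu_i(A)\lambda_i^k$ where $\lambda_i$ are the eigenvalues of $D_*$ and $\mu_i(A)$ are roots of unity arising from $A_*$. Expanding the resulting product yields
\[
\det(I-A_*D_*^k)=\sum_{J\subseteq\{1,\ldots,n\}}(-1)^{|J|}\beta_J(A)^k,\qquad \beta_J(A):=\prod_{j\in J}\mu_j(A)\lambda_j,
\]
with each $\beta_J(A)$ an algebraic integer.

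The key step is then a sign analysis for the absolute value: using that $S$ is of type~$\R$, one shows that $\epsilon_A^{(k)}:=\sgn\det(I-A_*D_*^k)$ is eventually periodic in $k$ with bounded period (constant in $k$ when the relevant eigenvalues of $D_*$ are positive, period two otherwise). Hence $\bigl|\det(I-A_*D_*^k)\bigr|=\epsilon_A^{(k)}\det(I-A_*D_*^k)$ is a $\pm1$-linear combination of $k$-th powers of algebraic integers. Applying the M\"obius--Gauss congruence termwise to each $\beta_J(A)^d$, partitioning the divisors of $k$ by parity to absorb the dependence of $\epsilon_A^{(d)}$ on $d$, and summing over $A\in\Phi$ using the $\Phi$-equivariance of the eigenvalue data, I expect to obtain
\[
\sum_{d\mid k}\mu(k/d)\sum_{A\in\Phi}\bigl|\det(I-A_*D_*^d)\bigr|\equiv0\pmod{k\cdot\#\Phi},
\]
from which dividing by $\#\Phi$ yields~\eqref{DN}.

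I anticipate the sign analysis, together with extracting the extra factor $\#\Phi$ after the averaging, to be the main technical obstacle: controlling how the parity-dependent signs $\epsilon_A^{(d)}$ couple to the $\Phi$-action on $\{\mu_i(A)\}$ requires careful equivariant bookkeeping, and it is precisely here that the type~$\R$ hypothesis enters essentially, by forcing the eigenvalue data of $D_*$ on the supersolvable graded pieces to behave in a real, controllable way.
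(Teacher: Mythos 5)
The paper does not reprove this theorem directly; it is quoted from \cite{FL}, and the paper explains the argument in the remark that follows: the alternating sum $\sum_{d\mid k}\mu(k/d)\,N(f^d)$ is shown in \cite{FL} to equal the number of isolated (equivalently, essential) periodic points of $f$ with least period exactly $k$ (this identity reappears here as Proposition~\ref{epp}, $\#\EP_k(f)=I_k(f)$). Since those points come in full $f$-orbits of length exactly $k$, the count is automatically $\equiv 0\pmod k$. The paper also sketches a second route, a ``reproof'' via rationality of $N_f(z)$: one writes $N(f^k)=\tr M_+^k-\tr M_-^k$ for \emph{integer} matrices $M_\pm$ (formula \eqref{N2}) and invokes Zarelua's Gauss congruence $\sum_{d\mid k}\mu(k/d)\tr(M^d)\equiv 0\pmod k$. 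Your proposal follows neither route and, as written, has several genuine gaps.

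First, the eigenvalue factorization you assert for $A_*D_*^k$ is not established and is not true in the stated generality. The matrices $A_*$ and $D_*$ need not commute, and a finite group $\Phi_*$ together with $D_*$ is not in general simultaneously triangularizable, so there is no basis in which the eigenvalues of $A_*D_*^k$ factor as $\mu_i(A)\lambda_i^k$. The hypothesis of type~$\R$ controls $\Ad(S)$ and hence $D_*$, but it does not by itself give a flag invariant under all of $\Phi_*$; that would require a separate argument (and if $\Phi$ were, say, a noncyclic group acting irreducibly on some graded piece, no such flag exists).

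Second, and more fundamentally, the final congruence you aim for is too strong. Applying the M\"obius--Gauss congruence termwise to quantities of the form $\beta_J(A)^d$ gives, for each fixed $A$, a congruence $\pmod k$ in the ring of algebraic integers; summing over the $\#\Phi$ values of $A$ still only gives $\pmod k$. There is no mechanism by which ``$\Phi$-equivariance of the eigenvalue data'' would upgrade this to $\pmod{k\cdot\#\Phi}$, and without that factor you cannot divide through by $\#\Phi$ to reach \eqref{DN}. The integrality of $N(f^k)$ itself is a nontrivial consequence of the averaging formula (the individual terms $|\det(I-A_*D_*^k)|$ are not divisible by $\#\Phi$), so the division-by-$\#\Phi$ step is precisely where a direct attack on (AV) breaks down. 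Finally, the ``sign analysis'' is sketched rather than proved: the sign $\sgn\det(I-A_*D_*^d)$ depends on $d$ through each real eigenvalue $<-1$ separately, and different $A$'s can produce incompatible parities, so ``partitioning the divisors of $k$ by parity'' does not cleanly isolate a single $\pm$-signed combination of $d$-th powers. If you want an algebraic proof rather than the dynamical one, the robust route is the paper's: establish rationality of $N_f(z)$ first (Theorem~\ref{thm:DD}), deduce the integer-matrix trace formula \eqref{N2}, and then apply Zarelua's congruence.
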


{Indeed, we have shown in \cite[Theorem~11.4]{FL} that the left-hand side is non-negative because it is equal to the number of isolated periodic points of $f$ with least period $k$.
By \cite[Lemma~2.1]{PW}, the sequence $\{N_k(f)\}$ is exactly realizable.}
\smallskip

Consider the sequences of algebraic multiplicities $\{A_k(f)\}$ and Dold multiplicities $\{I_k(f)\}$ associated to the sequence $\{N(f^k)\}$:
\begin{align}\label{eq:dold}
A_k(f)=\frac{1}{k}\sum_{d\mid k}\mu\!\left(\frac{k}{d}\right)N(f^d),\quad
I_k(f)=\sum_{d\mid k}\mu\!\left(\frac{k}{d}\right)N(f^d).
\end{align}
Then $I_k(f)=kA_k(f)$ and all $A_k(f)$ are integers by Theorem~\ref{thm:Dold}. From the M\"{o}bius inversion formula, we immediately have
\begin{align}\label{eq:algebraic period}
N(f^k)=\sum_{d\mid k}\ d\!\!~A_d(f).
\end{align}

\begin{Thm}[{\cite[Theorem~4.5]{DeDu}}]\label{thm:DD}
Let  $f:M\to M$ be a map on an infra-solvmanifold $M$ of type $\R$. Then the Nielsen zeta function of $f$
$$
N_f(z)=\exp\left(\sum_{k=1}^\infty\frac{N(f^k)}{k}z^k\right)
$$
is a rational function.
\end{Thm}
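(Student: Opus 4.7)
The plan is to show that $N(f^k)$, as a function of $k$, is a finite exponential sum $N(f^k) = \sum_j c_j \nu_j^k$ with algebraic bases $\nu_j$ and coefficients $c_j$. Rationality of $N_f(z)$ then follows from the identity
$$
\sum_{k\geq1}\frac{\nu^k}{k}z^k = -\log(1-\nu z),
$$
which converts the exponential sum into a finite sum of logarithms of polynomials; exponentiating and grouping Galois-conjugate factors to obtain integer exponents yields a rational function.

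Starting from the averaging formula (AV), write
$$
N(f^k) = \frac{1}{\#\Phi}\sum_{A\in\Phi}\sigma_A(k)\det(I-A_*D_*^k),\qquad \sigma_A(k):=\sgn\det(I-A_*D_*^k).
$$
For each $A\in\Phi$ I would choose a basis of $\frakS\otimes\bbc$ in which both $A_*$ and $D_*$ are upper triangular---available, after decomposing into $A_*$-isotypic components (on which $A_*$ acts by scalars and $D_*$ by a completely solvable automorphism triangularizable by Lie's theorem). In such a basis $A_*D_*^k$ is upper triangular with diagonal entries $\alpha_i\delta_i^k$, the $\alpha_i$ being (root-of-unity) eigenvalues of $A_*$ and the $\delta_i$ eigenvalues of $D_*$, so
$$
\det(I-A_*D_*^k) = \prod_i(1-\alpha_i\delta_i^k)
$$
expands as a finite exponential sum in $k$.

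It remains to show that $\sigma_A(k)$ is also a finite exponential sum. I would prove it to be eventually periodic in $k$: eigenvalues with $|\delta_i|\neq 1$ contribute a sign to $\det(I-A_*D_*^k)$ that stabilizes for large $k$, while those with $|\delta_i|=1$ must be roots of unity---by Kronecker's theorem, applied to the integral action of a suitable iterate of $D$ on the lattice $\Gamma=\Pi\cap S$ in the special-solvmanifold cover $\Gamma\bs S\to M$ (available under the type $\R$ hypothesis via Lemma~\ref{lemma:supersolvability})---and contribute a purely periodic sign. Taking the least common multiple of these periods over the finite set $\Phi$ yields a uniform $m$; the indicator of each residue class mod $m$ is a combination of characters $k\mapsto\zeta^k$ with $\zeta^m=1$, so $\sigma_A(k)$ is itself a finite exponential sum. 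Multiplying and averaging as in (AV), we obtain $N(f^k)$ as a finite exponential sum, completing the argument. The delicate step, and main obstacle, is the simultaneous triangularization together with the rigorous periodicity of the signs coming from unit-circle eigenvalues; the rest is formal generating-function manipulation.
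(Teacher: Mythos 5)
There is a genuine gap, and the paper itself contains a counterexample to the step you flag as delicate. You claim that the eigenvalues $\delta_i$ of $D_*$ with $|\delta_i|=1$ must be roots of unity by Kronecker's theorem. This is false: Kronecker's theorem requires \emph{all} conjugates of the algebraic integer to lie on or inside the unit circle, and an integral matrix can certainly have a unit-modulus eigenvalue whose conjugates wander outside the unit disk. The paper's own Example~\ref{exa:JM}(3) exhibits exactly this: $D$ is the integral $4\times 4$ companion-type matrix with characteristic polynomial $t^4-2t^3-2t+1$, whose roots are $\alpha,\beta=1/\alpha$ (real, with $\beta>1$) and a conjugate pair $\omega,\bar\omega$ of modulus $1$ which the paper explicitly notes are \emph{not} roots of unity. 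So your argument that $\sigma_A(k)$ is eventually periodic fails as written; passing to an iterate of $D$ does not help, since $\omega^m$ is still not a root of unity for any $m$.

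The sign periodicity can be rescued, but by a different observation: since $A_*D_*^k$ is a real matrix, its non-real eigenvalues occur in conjugate pairs, and each such pair contributes $(1-\mu)(1-\bar\mu)=|1-\mu|^2\ge 0$ to $\det(I-A_*D_*^k)$, hence never affects the sign regardless of whether $\mu$ is a root of unity. Only the real eigenvalues matter, and a real eigenvalue $\lambda$ gives $\sgn(1-\lambda^k)$ equal to $+1$ (if $|\lambda|<1$), $-1$ (if $\lambda>1$), $(-1)^{k+1}$ (if $\lambda<-1$), or makes the determinant vanish (if $\lambda=\pm 1$ and $k$ has the right parity). This is essentially the mechanism behind the Dekimpe--Dugardein theorem that the paper cites: they show $N_f(z)$ equals $L_f((-1)^nz)^{\pm1}$ or a quotient of two Lefschetz zeta functions $L_{f_+}/L_f$ on a double cover, where $p,n$ count the real eigenvalues of $D_*$ with modulus $>1$, so rationality reduces to the classical rationality of $L_f$. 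Note also that the paper does not prove this statement at all --- it cites it to \cite[Theorem~4.5]{DeDu} and only sketches the resulting formulas --- so your attempt to give a from-scratch proof is a genuinely different route, but in its current form the key lemma it rests on is wrong.
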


In fact, it is well-known that
$$
L_f(z)=\prod_{k=0}^m\det(I-z\cdot {f_*}_k)^{(-1)^{k+1}}
$$
where ${f_*}_k:H_k(M;\bbq)\to H_k(M;\bbq)$. Hence $L_f(z)$ is a rational function with coefficients in $\bbq$.
In \cite[Theorem~4.5]{DeDu}, it is shown that $N_f(z)$ is either
$$
N_f(z)=L_f((-1)^nz)^{(-1)^{p+n}}
$$
or
$$
N_f(z)=\left(\frac{L_{f_+}((-1)^nz)}{L_f((-1)^nz)}\right)^{(-1)^{p+n}}
$$
where $p$ is the number of real eigenvalues of $D_*$ which are $>1$ and $n$ is the number of real eigenvalues of $D_*$ which are $<-1$. Here $f_+$ is a lift of $f$ to a certain $2$-fold covering of $M$ which has the same affine homotopy lift $(d,D)$ as $f$. Consequently, $N_f(z)$ is a rational function with coefficients in $\bbq$.

On the other hand, since $N_f(0)=1$ by definition, $z=0$ is not a zero nor a pole of the rational function $N_f(z)$.
Thus we can write
\begin{align}\label{eq:rat}
N_f(z)=\frac{u(z)}{v(z)}=\frac{\prod_i(1-\beta_iz)}{\prod_j(1-\gamma_jz)}=\prod_{i=1}^r(1-\lambda_iz)^{-\rho_i}
\end{align}
with all $\lambda_i$ distinct nonzero algebraic integers{(see for example \cite{BL} or \cite[Theorem~2.1]{BaBo})}
 and $\rho_i$ nonzero integers. Taking log on both sides of the above identity, we obtain
$$
\sum_{k=1}^\infty \frac{N(f^k)}{k}z^k=\sum_{i=1}^r-\rho_i\log(1-\lambda_iz)
=\sum_{i=1}^r\rho_i\left(\sum_{k=1}^\infty\frac{(\lambda_iz)^k}{k}\right)
=\sum_{k=1}^\infty\frac{\sum_{i=1}^r\rho_i\lambda_i^k}{k}z^k.
$$
This induces
\begin{align}\label{N_k}
N(f^k)=\sum_{i=1}^{r(f)}\rho_i\lambda_i^k.\tag{N1}
\end{align}
Note that $r(f)$ is {the number of zeros and poles of $N_f(z)$}. Since $N_f(z)$ is a homotopy invariant, so is $r(f)$.
This argument tells us that whenever we have a rational expression of $N_f(z)$, we can write down all $N(f^k)$ directly from the expression. However even though we can compute all $N(f^k)$ using the averaging formula, it can be rather complicated to write down the rational expression of $N_f(z)$. %, see \cite{Du}.

Consider another generating function associated to the sequence $\{N(f^k)\}$:
$$
S_f(z)=\frac{d}{dz}\log N_f(z).
$$
Then it is easy to see that
\begin{align}\label{eq:gen fn}
S_f(z)=\sum_{k=1}^\infty N(f^k)z^{k-1}=\sum_{k=1}^\infty \sum_{i=1}^{r(f)}\rho_i\lambda_i^kz^{k-1}=\sum_{i=1}^{r(f)}\frac{\rho_i\lambda_i}{1-\lambda_iz},
\end{align}
which is a rational function with simple poles and integral residues, and $0$ at infinity.
The rational function $S_f(z)$ can be written as $S_f(z)=u(z)/v(z)$ where the polynomials $u(z)$ and $v(z)$ are of the form
$$
u(z)=N(f)+\sum_{i=1}^s a_iz^i,\quad v(z)=1+\sum_{j=1}^tb_jz^j
$$
with $a_i$ and $b_j$ integers, see $(3)\Rightarrow(5)$, Theorem~2.1 in \cite{BaBo} or \cite[Lemma~3.1.31]{JM}. Let $\tilde{v}(z)$ be the conjugate polynomial of $v(z)$, i.e., $\tilde{v}(z)=z^{t}v(1/z)$. Then the numbers $\{\lambda_i\}$ are the roots of $\tilde{v}(z)$, and $r(f)=t$.

The following can be found in the proof of $(3)\Rightarrow(5)$, Theorem~2.1 in \cite{BaBo}.
\begin{Lemma}\label{lemma:conjugate}
If $\lambda_i$ and $\lambda_j$ are roots of the {rational} polynomial $\tilde{v}(z)$ which are algebraically conjugate $($i.e., $\lambda_i$ and $\lambda_j$ are roots of the same irreducible polynomial$)$, then $\rho_i=\rho_j$.
\end{Lemma}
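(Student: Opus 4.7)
The plan is to identify each $\rho_i$ (up to sign) as the residue of $S_f(z)$ at a simple pole, and then exploit the fact that $S_f(z)\in\bbq(z)$ to transport residues across a Galois conjugation.

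First I would rewrite (\ref{eq:gen fn}) in standard partial-fraction form. Setting $w_i=1/\lambda_i$ and using $1-\lambda_i z=-\lambda_i(z-w_i)$, formula (\ref{eq:gen fn}) becomes
$$
S_f(z)=\sum_{i=1}^{r(f)}\frac{-\rho_i}{z-w_i}.
$$
The $w_i$ are precisely the (distinct, simple) poles of $S_f$, hence the roots of the denominator polynomial $v(z)\in\bbz[z]$, and the residue of $S_f$ at $w_i$ equals $-\rho_i$. Furthermore, $\lambda_i$ and $\lambda_j$ are algebraically conjugate over $\bbq$ if and only if $w_i$ and $w_j$ are, since $w_k\mapsto 1/w_k$ is a $\bbq$-rational bijection between the roots of $\tilde v$ and those of $v$.

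Next I would invoke Galois symmetry. Suppose $\lambda_i,\lambda_j$ are roots of a common irreducible polynomial in $\bbq[z]$; choose $\sigma\in\mathrm{Gal}(\overline{\bbq}/\bbq)$ with $\sigma(\lambda_i)=\lambda_j$, equivalently $\sigma(w_i)=w_j$. Because $u(z),v(z)\in\bbq[z]$, the rational function $S_f(z)=u(z)/v(z)$ is fixed by the coefficient-wise action of $\sigma$. Applying $\sigma$ to the displayed partial-fraction decomposition produces another decomposition of the same function $S_f(z)$, in which the coefficient of $1/(z-w_j)$ is $\sigma(-\rho_i)$. By the uniqueness of the partial-fraction decomposition over $\bbc$, this coefficient must equal $-\rho_j$, so $\sigma(-\rho_i)=-\rho_j$. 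Since $\rho_i\in\bbz$ is fixed by $\sigma$, we conclude $\rho_i=\rho_j$.

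The only subtle point is the Galois-equivariance of the partial-fraction decomposition; this is however immediate from its uniqueness over $\bbc$ once one notes that $\sigma$ permutes the poles $\{w_i\}$ and fixes the rational function $u/v$ itself. I therefore do not anticipate a real obstacle in executing this plan.
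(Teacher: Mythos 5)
Your proof is correct, and it takes a slightly different route from the paper's. The paper transports the Galois symmetry through the coefficient sequence $\{N(f^k)\}$ rather than through the generating function $S_f(z)$: since each $N(f^k)\in\bbz$ is fixed by $\sigma$, applying $\sigma$ to the identity $N(f^k)=\sum_i\rho_i\lambda_i^k$ yields $\sum_i\rho_i\lambda_i^k=\sum_i\rho_{\sigma^{-1}(i)}\lambda_i^k$ for all $k$, and the invertibility of the Vandermonde matrix in the distinct $\lambda_i$ (taking $k=1,\dots,r$) then forces $\rho_i=\rho_{\sigma^{-1}(i)}$. You instead use that $S_f(z)=u(z)/v(z)$ has integral coefficients, hence is $\sigma$-invariant, and read off the same conclusion from uniqueness of the partial-fraction decomposition over $\bbc$. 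The two arguments are equivalent in content --- uniqueness of partial fractions is, under the hood, exactly the nonvanishing of a Vandermonde determinant --- but your packaging is marginally cleaner: the permutation of residues falls straight out of matching simple poles, with no explicit $r\times r$ linear system. Both versions rely on the two crucial ingredients you correctly deploy, namely that $\rho_i\in\bbz$ (so $\sigma(\rho_i)=\rho_i$) and that the Galois group acts transitively on algebraically conjugate roots.
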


\begin{proof}
Let $\Sigma=\bbq(\lambda_1,\cdots,\lambda_r)\subset\bbc$ be the field of the rational polynomial $\tilde{v}(z)$ and let $\sigma$ be an automorphism of $\Sigma$ over $\bbq$, i.e., $\sigma$ is the identity on $\bbq$. The group of all such automorphisms is called the \emph{Galois group} of $\Sigma$. Since the $\sigma(\lambda_i)$ are again the roots of $\tilde{v}(z)$, we have $\sigma(\lambda_i)=\lambda_{\sigma(i)}$. That is, $\sigma$ induces a permutation $\sigma$ on $\{1,\cdots,r\}$. Applying $\sigma$ to the sequence $\{N(f^k)\}$, we obtain
\begin{align*}
\sigma\left(N(f^k)\right)&=\sigma\left(\sum_{i=1}^{r}\rho_i\lambda_i^k\right)
=\sum_{i=1}^{r}\rho_i\sigma(\lambda_i)^k=\sum_{i=1}^{r}\rho_i\lambda_{\sigma(i)}^k
=\sum_{i=1}^{r}\rho_{\sigma^{{-}1}(i)}\lambda_i^k.
\end{align*}
Since the $N(f^k)$ are integers, $\sigma(N(f^k))=N(f^k)$ and consequently
$$
\sum_{i=1}^{r}\rho_{i}\lambda_i^k=\sum_{i=1}^{r}\rho_{\sigma^{{-}1}(i)}\lambda_i^k.
$$
As a matrix form, we can write
$$
\left[\begin{matrix}
\lambda_1&\lambda_2&\cdots&\lambda_r\\
\lambda_1^2&\lambda_2^2&\cdots&\lambda_r^2\\
\vdots&\vdots&&\vdots\\
\lambda_1^r&\lambda_2^r&\cdots&\lambda_r^r\\
\end{matrix}\right]
\left[\begin{matrix}\rho_1\\ \rho_2\\ \vdots\\ \rho_r
\end{matrix}\right]
=\left[\begin{matrix}
\lambda_1&\lambda_2&\cdots&\lambda_r\\
\lambda_1^2&\lambda_2^2&\cdots&\lambda_r^2\\
\vdots&\vdots&&\vdots\\
\lambda_1^r&\lambda_2^r&\cdots&\lambda_r^r\\
\end{matrix}\right]
\left[\begin{matrix}\rho_{\sigma^{-1}(1)}\\ \rho_{\sigma^{-1}(2)}\\ \vdots\\ \rho_{\sigma^{-1}(r)}
\end{matrix}\right].
$$
Since the $\lambda_i$ are distinct, the matrices in this equation are nonsingular (the Vandermonde determinant). Thus $\rho_i=\rho_{\sigma^{-1}(i)}$ for all $i=1,\cdots,r$. On the other hand, it is known that the Galois group acts transitively on the set of algebraically conjugate roots. Since $\lambda_i$ and $\lambda_j$ are conjugate roots of $\tilde{v}(z)$, we can choose $\sigma$ in the Galois group so that $\sigma(\lambda_i)=\lambda_j$. Hence $\sigma(i)=j$ and so $\rho_i=\rho_j$.
\end{proof}

Let $\tilde{v}(z)=\prod_{\alpha=1}^s\tilde{v}_\alpha(z)$ be the decomposition of the monic integral polynomial $\tilde{v}(z)$ into irreducible polynomials $\tilde{v}_\alpha(z)$ of degree $r_\alpha$. Of course, $r=r(f)=\sum_{\alpha=1}^sr_\alpha$ and
\begin{align*}
\tilde{v}(z)&=z^r+b_1z^{r-1}+b_2z^{r-2}+\cdots+b_{r-1}z+b_r\\
&=\prod_{\alpha=1}^s(z^{r_\alpha}+b_1^\alpha z^{r_\alpha-1}+b_2^\alpha z^{r_\alpha-2}+\cdots+b_{r_\alpha-1}^\alpha z+b_{r_\alpha}^\alpha)
=\prod_{\alpha=1}^s\tilde{v}_\alpha(z).
\end{align*}
If $\{\lambda_i^{(\alpha)}\}$ are the roots of $\tilde{v}_\alpha(z)$, then the associated $\rho$'s are the same $\rho_\alpha$. Consequently, we can rewrite \eqref{N_k} as
\begin{align*}
N(f^k)&=\sum_{\alpha=1}^s\rho_\alpha\left(\sum_{i=1}^{r_\alpha}(\lambda_i^{(\alpha)})^k\right)\\
&=\sum_{\rho_\alpha>0}\rho_\alpha^+\left(\sum_{i=1}^{r_\alpha}(\lambda_i^{(\alpha)})^k\right)
-\sum_{\rho_\alpha<0}\rho_\alpha^-\left(\sum_{i=1}^{r_\alpha}(\lambda_i^{(\alpha)})^k\right).
\end{align*}
Consider the $r_\alpha\x r_\alpha$-{integral} square matrices
$$
M_\alpha=\left[\begin{matrix}
0&0&\cdots&0&-b^\alpha_{r_\alpha}\\
1&0&\cdots&0&\hspace{10pt}-b^\alpha_{r_\alpha-1}\\
\vdots&\vdots&&\vdots&\vdots\\
0&0&\cdots&0&-b^\alpha_2\\
0&0&\cdots&1&-b^\alpha_1
\end{matrix}\right].
$$
The characteristic polynomial is $\det(zI-M_\alpha)=\tilde{v}_\alpha(z)$ and therefore $\{\lambda_i^{(\alpha)}\}$ are the eigenvalues of $M_\alpha$. This implies that $N(f^k)=\sum_{\alpha=1}^s\rho_\alpha\ \tr M_\alpha^k$. Set
$$
M_+=\bigoplus_{\rho_\alpha>0} \rho_\alpha^+ M_\alpha,\qquad
M_-=\bigoplus_{\rho_\alpha<0} \rho_\alpha^- M_\alpha.
$$
Then
\begin{align}\label{N2}
N(f^k)=\tr M_+^k-\tr M_-^k=\tr(M_+\bigoplus -M_-)^k.\tag{N2}
\end{align}

From Theorem~\ref{thm:DD}, the rationality of the zeta functions on infra-solvmanifolds of type $\R$,
it was possible to derive the identities \eqref{N2} above.
Then we can reprove the Dold-Nielsen congruences \eqref{DN} of Theorem~\ref{thm:Dold}
by using the following fact ({\cite[Theorem~1]{Zarelua}}):
%\begin{Thm}[{\cite[Theorem~1]{Zarelua}}]\label{thm:Zarelua}
The Gauss congruence for the traces of powers of integer matrices
$$
\sum_{d\mid k}\mu\!\left(\frac{k}{d}\right)\tr\!(M^d)\equiv 0\mod k
$$
holds for all integer matrices $M$ and all natural $k$.
%\end{Thm}
Recall also from \cite{Zarelua} that the following Euler congruence
$$
\tr\!(M^{p^r})\equiv \tr\!(M^{p^{r-1}})\mod p^r
$$
holds for all integer matrices $M$, all natural $r$, and all prime numbers $p$.
Furthermore, it is shown in \cite[Theorem~9]{Zarelua} that the above two assertions are equivalent.
Immediately we obtain the following Euler congruences for the Nielsen numbers,
which are equivalent to the Gauss congruences for the Nielsen numbers in Theorem~\ref{thm:Dold}.

\begin{Thm}\label{thm:EN}
Let  $f:M\to M$ be a map on an infra-solvmanifold $M$ of type $\R$. Then
\begin{align}\label{eq:EN}
N(f^{p^r})\equiv N(f^{p^{r-1}})\mod{p^r}\tag{EN}
\end{align}
for all $r>0$ and all prime numbers $p$.
\end{Thm}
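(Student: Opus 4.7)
The plan is to reduce the statement directly to the Euler congruence for traces of powers of integer matrices that the authors have already quoted from Zarelua. All the structural work has been done in the preceding pages, so the proof should be essentially a one-line application of that congruence combined with the identity (N2).

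First, I would invoke (N2), which gives
$$
N(f^k)=\tr\bigl(M_+\oplus(-M_-)\bigr)^k,
$$
where $M_+$ and $M_-$ are the block-diagonal integer matrices built from the companion matrices $M_\alpha$ (with integer entries coming from the integer coefficients of the irreducible factors $\tilde v_\alpha(z)$ of $\tilde v(z)$) with multiplicities $\rho_\alpha^\pm$. The essential point here is that $M:=M_+\oplus(-M_-)$ has integer entries, which follows from Lemma~\ref{lemma:conjugate} and the discussion leading to (N2): the exponents $\rho_\alpha$ are the same for algebraically conjugate roots, so the multiplicities distribute evenly over the irreducible factors and the direct-sum matrix is integral.

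Next, I would apply Zarelua's theorem in its Euler form to the integer matrix $M$: for every prime $p$ and every $r>0$,
$$
\tr(M^{p^r})\equiv\tr(M^{p^{r-1}})\pmod{p^r}.
$$
Substituting $N(f^{p^r})=\tr(M^{p^r})$ and $N(f^{p^{r-1}})=\tr(M^{p^{r-1}})$ yields (EN) immediately.

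There is essentially no obstacle: the entire content of the theorem has been packaged into the integrality of $M_+\oplus(-M_-)$ and the identity (N2). I might add a one-line remark that the equivalence between the Gauss congruences (DN) of Theorem~\ref{thm:Dold} and the Euler congruences (EN) already follows from \cite[Theorem~9]{Zarelua}, so that Theorem~\ref{thm:EN} can alternatively be viewed as an equivalent reformulation of Theorem~\ref{thm:Dold} rather than a new result, once the trace representation (N2) is available.
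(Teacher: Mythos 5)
Your proof is correct and follows exactly the paper's argument: the paper derives (EN) as an immediate consequence of the trace representation (N2) applied to the integer matrix $M_+\oplus(-M_-)$ together with Zarelua's Euler congruence for traces of powers of integer matrices. The closing remark about the equivalence of (DN) and (EN) via Zarelua's Theorem~9 is also made in the paper itself.
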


We remark also that the rationality of the Nielsen zeta function $N_f(z)$
on an infra-solvmanifold of type $\R$
is equivalent to the existence of a self-map $g$ of some topological space $X$ such that
$N(f^k)=L(g^k)$ for all $k\ge1$.
In addition, due to the Gauss congruences \eqref{DN} in Theorem~\ref{thm:Dold}
we can choose $X$ to be a compact polyhedron, see \cite[Theorem~2.1 and Theorem(Dold)]{BaBo}.
Thus we can say that Nielsen theory on infra-solvmanifolds of type $\R$ is simpler than
we have expected and is reduced to Lefschetz theory but on different spaces that are not necessarily closed nor aspherical manifolds.

We will show in Proposition~\ref{epp} that if $A_k(f)\ne0$ then $N(f^k)\ne0$
and hence $f$ has an essential periodic point of period $k$. In the following we investigate some other necessary conditions under which $N(f^k)\ne0$. Recall that
\begin{align*}
N(f^k)&=\text{ the number of essential fixed point classes of $f^k$.}
\end{align*}
If $\bbf$ is a fixed point class of $f^k$, then $f^k(\bbf)=\bbf$ and the \emph{length} of $\bbf$ is the smallest number $p$ for which $f^p(\bbf)=\bbf$, written $p(\bbf)$. We denote by $\langle\bbf\rangle$ the $f$-orbit of $\bbf$, i.e., $\langle\bbf\rangle=\{\bbf,f(\bbf),\cdots, f^{p-1}(\bbf)\}$ where $p=p(\bbf)$. If $\bbf$ is essential, so is every $f^i(\bbf)$ and $\langle\bbf\rangle$ is an \emph{essential} periodic orbit of $f$ with length $p(\bbf)$ and $p(\bbf)\mid k$. These are variations of Corollaries~2.3, 2.4 and 2.5 of \cite{BaBo}.

\begin{Cor}\label{2.9}
If $r(f)\ne0$, then $N(f^i)\ne0$ for some $1\le i\le r(f)$. In particular, $f$ has at least $N(f^i)$ essential periodic points of period $i$ and an essential periodic orbit with the length $p\mid i, i\leq r(f)$.
\end{Cor}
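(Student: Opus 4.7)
The plan is to prove the nonvanishing claim by contradiction, using the rational expansion \eqref{N_k} that has already been derived from the rationality of $N_f(z)$, and then to read off the remaining two assertions directly from the definition of an essential fixed point class and its length.

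First, I would assume for contradiction that $N(f^k)=0$ for every $k$ with $1\le k\le r:=r(f)$. Applying \eqref{N_k} to each such $k$ produces a homogeneous linear system in the unknowns $\rho_1,\dots,\rho_r$ whose coefficient matrix is exactly the Vandermonde-type matrix
$$
\left[\begin{matrix}
\lambda_1&\lambda_2&\cdots&\lambda_r\\
\lambda_1^2&\lambda_2^2&\cdots&\lambda_r^2\\
\vdots&\vdots&&\vdots\\
\lambda_1^r&\lambda_2^r&\cdots&\lambda_r^r
\end{matrix}\right]
$$
already exhibited in the proof of Lemma~\ref{lemma:conjugate}. Its determinant factors as $\prod_i\lambda_i\cdot\prod_{i<j}(\lambda_j-\lambda_i)$, which is nonzero because the $\lambda_i$ are pairwise distinct and each is nonzero (the latter from the observation preceding \eqref{eq:rat} that $z=0$ is neither a zero nor a pole of $N_f(z)$). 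It follows that $\rho_i=0$ for every $i$, contradicting the assumption that each $\rho_i$ is a nonzero integer. Hence some $1\le i\le r(f)$ satisfies $N(f^i)\ne 0$.

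For the remaining assertions, once such an $i$ is fixed, $N(f^i)$ is by definition the number of essential fixed point classes of $f^i$, and each such class is a nonempty subset of $\fix(f^i)$ consisting of essential periodic points of $f$ of period dividing $i$; this yields at least $N(f^i)$ essential periodic points of period $i$. Moreover, any essential fixed point class $\bbf$ of $f^i$ has a well-defined length $p=p(\bbf)$ dividing $i$, and, since essentiality is preserved by the action of $f$, its $f$-orbit $\langle\bbf\rangle$ is an essential periodic orbit of $f$ of length $p\mid i$ with $p\le i\le r(f)$.

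I do not anticipate a serious obstacle: the representation \eqref{N_k} is already available from the discussion preceding the statement, and the only computation required is the classical non-vanishing of the Vandermonde determinant. The one point that must not be overlooked is that every $\lambda_i$ is nonzero, which is precisely the remark recorded just before \eqref{eq:rat}; without this the factor $\prod_i\lambda_i$ could vanish and the linear-algebra argument would collapse.
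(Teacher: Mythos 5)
Your proof is correct, and it takes a genuinely different (though closely related) route from the paper's. The paper argues directly from the rational form $S_f(z)=u(z)/v(z)$ of the generating function in \eqref{eq:gen fn}: if $N(f)=\cdots=N(f^{r(f)})=0$, the power series $s(z)=\sum_{k\ge1}N(f^k)z^{k-1}$ vanishes to order $r(f)$ at $0$, so the identity $u(z)=v(z)s(z)$ forces the polynomial $u$, which has degree at most $r(f)-1$, to be identically zero --- a contradiction. You instead read the same hypothesis through the closed form \eqref{N_k}, $N(f^k)=\sum_{i=1}^{r}\rho_i\lambda_i^k$, as an $r\times r$ homogeneous linear system in the $\rho_i$, and invoke the nonvanishing of the Vandermonde determinant $\prod_i\lambda_i\cdot\prod_{i<j}(\lambda_j-\lambda_i)$ (valid because the $\lambda_i$ are distinct and nonzero, as recorded just before \eqref{eq:rat}) to conclude $\rho_i=0$ for all $i$, contradicting that each $\rho_i$ is a nonzero integer. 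Both arguments are driven by the same rationality input; the paper's version is phrased as a degree count on polynomials, yours as an invertibility statement in linear algebra, which is arguably more transparent and reuses the Vandermonde matrix already displayed in the proof of Lemma~\ref{lemma:conjugate}. Your handling of the ``in particular'' clause --- reading off the essential periodic points and the essential orbit $\langle\bbf\rangle$ of length $p(\bbf)\mid i$ from the paragraph preceding the corollary --- matches the intended (and unstated) argument in the paper.
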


\begin{proof}
Recall from \eqref{eq:gen fn} that
$$
S_f(z)=\sum_{i=1}^{r(f)}\frac{\rho_i\lambda_i}{1-\lambda_iz}=\sum_{k=1}^\infty N(f^k)z^{k-1}.
$$
Assume that $N(f)=\cdots =N(f^{r(f)})=0$. For simplicity, write the above identity as $S_f(z)=u(z)/v(z)=s(z)$ where $v(z)$ is a polynomial of degree $r(f)$ and $s(z)$ is the series of the right-hand side. Then $u(z)=v(z)s(z)$. A simple calculation shows that the higher order derivative of $v(z)s(z)$ up to order $r(f)-1$ at $0$ are all zero. Since $u(z)$ is a polynomial of degree $r(f)-1$, it shows that $u(z)=0$, a contradiction.
\end{proof}

Recalling the identity $N(f^k)=\sum_{i=1}^{r(f)}\rho_i\lambda_i^k$, we define
\begin{align}\label{eq:rho}
\rho(f)=\sum_{i=1}^{r(f)}\rho_i,\
M(f)=\max\left\{\sum_{\rho_i\ge0}\rho_i,-\sum_{\rho_j<0}\rho_j\right\},\
m(f)=\min\left\{\sum_{\rho_i\ge0}\rho_i,-\sum_{\rho_j<0}\rho_j\right\}.
\end{align}

\begin{Cor}
If $\rho(f)=0$ and $r(f)\ge1$, then $r(f)\ge2$ and $N(f^i)\ne0$ for some $1\le i< r(f)$. In particular, $f$ has at least $N(f^i)$ essential periodic points of period $i$ and an essential periodic orbit with the length $p\mid i, i\leq r(f)-1$.
\end{Cor}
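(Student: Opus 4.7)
My strategy is to mimic the proof of Corollary~\ref{2.9}, exploiting the additional hypothesis $\rho(f)=0$ to lower the degree of the numerator of $S_f(z)$ by exactly one. Before running the main argument I would dispose of $r(f)=1$: in that case $N(f^k)=\rho_1\lambda_1^k$ for all $k$, so $\rho(f)=\rho_1$, and since $\rho_1\ne 0$ (as a genuine residue) this would contradict $\rho(f)=0$. Hence $r(f)\ge 2$, establishing the first assertion.

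The key technical input is a sharpened degree estimate on $u(z)=v(z)S_f(z)$, where $v(z)=\prod_{i=1}^{r(f)}(1-\lambda_iz)$ has degree $r(f)$. Expanding
\[
u(z)=\sum_{i=1}^{r(f)}\rho_i\lambda_i\prod_{j\ne i}(1-\lambda_jz),
\]
one sees that the coefficient of $z^{r(f)-1}$ equals $(-1)^{r(f)-1}\bigl(\prod_j\lambda_j\bigr)\rho(f)$, which vanishes under our hypothesis. Therefore $\deg u\le r(f)-2$, one less than in the setting of Corollary~\ref{2.9}.

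With this bound in hand I would run the Corollary~\ref{2.9} argument in parallel. Assume for contradiction that $N(f)=\cdots=N(f^{r(f)-1})=0$; then the power series $s(z)=\sum_{k\ge 1}N(f^k)z^{k-1}$ has vanishing coefficients in degrees $0,1,\ldots,r(f)-2$, and since $v(0)=1$, the same is true of $u(z)=v(z)s(z)$. Combined with $\deg u\le r(f)-2$, every coefficient of $u$ must vanish, so $u\equiv 0$. This contradicts $u\ne 0$, which holds because the partial-fraction decomposition $S_f(z)=\sum_i\rho_i\lambda_i/(1-\lambda_iz)$ has a simple pole with nonzero residue $-\rho_i$ at each $1/\lambda_i$. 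Hence $N(f^i)\ne 0$ for some $1\le i\le r(f)-1$; the final clause then follows verbatim from the discussion preceding Corollary~\ref{2.9}, since each of the $N(f^i)$ essential fixed point classes of $f^i$ yields an essential periodic point of period $i$ together with an essential $f$-orbit $\langle\bbf\rangle$ whose length $p$ divides $i$.

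I do not anticipate any serious obstacle: the only new content beyond Corollary~\ref{2.9} is the one-line computation identifying the leading coefficient of $u$ with a scalar multiple of $\rho(f)$, after which the earlier argument applies with one fewer degree of freedom to exploit.
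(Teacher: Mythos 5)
Your proof is correct and follows essentially the same route as the paper: assume $N(f)=\cdots=N(f^{r(f)-1})=0$ and derive a contradiction from the identity $u(z)=v(z)s(z)$ by examining the coefficient of $z^{r(f)-1}$. You make explicit a point the paper glosses over, namely that $[z^{r(f)-1}]u=(-1)^{r(f)-1}\bigl(\prod_j\lambda_j\bigr)\rho(f)$, so the hypothesis $\rho(f)=0$ forces $\deg u\le r(f)-2$; this is exactly what powers the contradiction in the paper's phrase "the derivative of order $r(f)-1$ $\ldots$ yields $N(f^{r(f)})=0$," and spelling it out is an improvement rather than a deviation.
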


\begin{proof}
The conditions $\rho(f)=0$ and $r(f)\ge1$ immediately implies that $r(f)\ge2$. Since $r(f)\ne0$ by the previous corollary there exists $i\in\{1,\cdots,r(f)\}$ such that $N(f^i)\ne0$.
Assume $N(f)=\cdots =N(f^{r(f)-1})=0$. So, $N(f^{r(f)})\ne0$. As in the proof of the above corollary, we consider $u(z)/v(z)=s(z)$ where $u(z)$ is a polynomial of degree $r(f)-1\ge1$ and $s(z)$ is a power series starting from the nonzero term $N(f^{r(f)})z^{r(f)-1}$. The derivative of order $r(f)-1$ on both sides of the identity $u(z)=v(z)s(z)$ yields that $N(f^{r(f)})=0$, a contradiction.
\end{proof}

\begin{Cor}
If $r(f)>0$, then $N(f^i)\ne0$ for some $1\le i\le M(f)$. In particular, $f$ has at least $N(f^i)$ essential periodic points of period $i$ and an essential periodic orbit with the length $p\mid i, i\leq M(f)$.
\end{Cor}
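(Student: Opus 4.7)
The plan is to argue by contradiction, assuming $N(f^i)=0$ for every $1\le i\le M(f)$, and to exploit the matrix representation \eqref{N2} together with Newton's identities on power sums.

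Let $a=\sum_{\rho_i>0}\rho_i$ and $b=-\sum_{\rho_j<0}\rho_j$, so that $M_+$ is an integer matrix of size $a\times a$, $M_-$ is an integer matrix of size $b\times b$, and $M(f)=\max(a,b)$. Since every $\rho_i$ is a nonzero integer, the hypothesis $r(f)>0$ yields $a+b\ge r(f)>0$. Assume without loss of generality that $a\ge b$, so $M(f)=a$. Under the contradiction hypothesis, \eqref{N2} gives $\tr M_+^i=\tr M_-^i$ for $1\le i\le a$. Pad $M_-$ with $a-b$ additional zero eigenvalues to obtain a multiset of size $a$ whose Newton power sums are unchanged; then $M_+$ and the padded $M_-$ have matching power sums $p_1,\ldots,p_a$. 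Newton's identities then force their elementary symmetric polynomials $e_1,\ldots,e_a$, and hence their characteristic polynomials, to coincide, so $M_+$ and the padded $M_-$ have the same multiset of eigenvalues.

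At this point I would invoke two features of the decomposition $N(f^k)=\sum_{i=1}^{r(f)}\rho_i\lambda_i^k$ to close the contradiction. First, the $\lambda_i$ are nonzero, so neither $M_+$ nor $M_-$ has $0$ as an eigenvalue; thus if $a>b$, the eigenvalue $0$ appearing with multiplicity $a-b$ in the padding contradicts $M_+$ having only nonzero eigenvalues. Second, the $\lambda_i$ are distinct and each is attached to a unique $\rho_i$, so the eigenvalue sets of $M_+$ (the $\lambda_i$ with $\rho_i>0$) and of $M_-$ (the $\lambda_j$ with $\rho_j<0$) are disjoint. Hence the case $a=b$ would force both multisets to be empty, i.e.\ $a=b=0$, contradicting $a+b>0$. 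Either way we reach a contradiction, so $N(f^i)\ne 0$ for some $1\le i\le M(f)$.

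For the remaining assertions about essential periodic points and orbits I would argue exactly as in Corollary~\ref{2.9}: a nonzero $N(f^i)$ produces at least $N(f^i)$ essential fixed point classes of $f^i$, each of which is an essential periodic point of $f$ of period $i$, and these classes group into essential $f$-orbits whose length $p$ divides $i$. The main obstacle is conceptual rather than computational, namely recognizing that the disjointness plus nonzeroness of the $\lambda_i$ eigenvalues of $M_\pm$ is exactly the combinatorial input that turns Newton's identities into a contradiction; once that is articulated, the rest is a one-line application of Newton/Vandermonde and parallels the proofs of the two preceding corollaries, with $r(f)$ replaced by the sharper bound $M(f)$.
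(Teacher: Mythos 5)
Your proof is correct and follows essentially the same route as the paper: both assume $N(f^i)=0$ for $i\le M(f)$, conclude $\tr M_+^i=\tr M_-^i$, pad the smaller matrix with zeros, deduce that the eigenvalue multisets coincide (you via Newton's identities, the paper by citing a Bourbaki corollary which says the same thing), and then derive a contradiction from the fact that the $\lambda_i$ are nonzero and the spectra of $M_+$ and $M_-$ come from disjoint sets of $\lambda_i$'s. Your phrasing of the equal-size case is actually a touch cleaner than the paper's (which detours through ``$N_f(z)\equiv1$'' rather than observing directly that equal disjoint sets must both be empty), but the underlying idea is identical.
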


\begin{proof}
Assume that $N(f^k)=0$ for all $k=1,\cdots, M(f)$. From \eqref{N2}, we have $J_k:=\tr M_+^k=\tr M_-^k$. For simplicity, suppose
$$
m(f)=\sum_{\rho_i>0}\rho_i\le-\sum_{\rho_j<0}\rho_j=M(f).
$$
Then the matrix $M_+$ has size $m(f)$ and $M_-$ has size $M(f)$. We write the eigenvalues of $M_+$ and $M_-$ respectively as
$$
\mu_1,\cdots,\mu_{m(f)};\ \tilde\mu_1,\cdots,\tilde\mu_{M(f)}.
$$
Of course, $\{\mu_1,\cdots,\mu_{m(f)},\tilde\mu_1,\cdots,\tilde\mu_{M(f)}\}=\{\lambda_1,\cdots,\lambda_r\}$ as a set. Now the identities $\tr M_+^k=\tr M_-^k$ yield the $M(f)$ equations
$$
\mu_1^k+\cdots+\mu_{m(f)}^k+0+\cdots+0=\tilde\mu_1^k+\cdots+\tilde\mu_{M(f)}^k
$$
where $\mu_j=0$ when $m(f)<j\le M(f)$. By \cite[p.72, Corollary]{Bour}, there exists a permutation $\sigma$ on $\{1,\cdots,M(f)\}$ such that $\mu_i=\tilde\mu_{\sigma(i)}$. If $m(f)<M(f)$ then $0=\mu_{M(f)}=\tilde\mu_{\sigma(M(f))}=\lambda_j$ for some $j$, a contradiction. Hence $m(f)=M(f)$ and the $\lambda_i$'s associated to $\rho_i>0$ and the $\lambda_j$'s associated with $\rho_j<0$ are the same. This implies that the rational function $N_f(z)$ has the same poles and zeros of equal multiplicity and hence $N_f(z)\equiv1$, contradicting that $r(f)>0$.
\end{proof}

\section{Radius of convergence of $N_f(z)$}

From the Cauchy-Hadamard formula, we can see that the radii $R$ of convergence of the infinite series $N_f(z)$ and $S_f(z)$ are the same and given by
$$
\frac{1}{R}=\limsup_{k\to\infty}\left(\frac{N(f^k)}{k}\right)^{1/k}
=\limsup_{k\to\infty}N(f^k)^{1/k}.
$$

We will understand the radius $R$ of convergence from the identity $N(f^k)=\sum_{i=1}^{r(f)}\rho_i\lambda_i^k$. Recall that the $\lambda_i^{-1}$ are the poles or the zeros of the rational function $N_f(z)$.

\begin{Def}\label{def:lambda}
We define
$$
\lambda(f):=\max\{|\lambda_i|\mid i=1,\cdots,r(f)\}.
$$
If $r(f)=0$, i.e., if $N(f^k)=0$ for all $k>0$, then $N_f(z)\equiv1$ and $1/R=0$. In this case, we define customarily $\lambda(f)=0$.
When $r(f)\ne0$, we define
$$
n(f) := \#\{i\mid |\lambda_i|=\lambda(f)\}.
$$
\end{Def}

We shall assume now that $r(f)\ne0$ or $\lambda(f)>0$.
First we can observe easily the following:
\begin{enumerate}
\item $\limsup z_k^{1/k}=\limsup (r_ke^{i\theta_k})^{1/k}=\limsup r_k^{1/k}e^{i\theta_k/k}=\limsup r_k^{1/k}$.
\item $\limsup (\lambda^k)^{1/k}=|\lambda|$ by taking $z_k=\lambda^k$ in (1).
\item When $\lim z_k=0$ in (1), $\limsup z_k^{1/k}=0$.
\item $\lim(z_k+\rho)^{1/k}=\lim z_k^{1/k}$ when $\lim z_k=\infty$. For, in this case (1) induces
      $$\lim\left(\frac{z_k+\rho}{z_k}\right)^{1/k}=1.$$
\end{enumerate}
Assume $|\lambda_j|\ne\lambda(f)$ for some $j$; then we have
$$
\frac{N(f^k)}{\lambda_j^k}=\sum_{i\ne j}\rho_i\left(\frac{\lambda_i}{\lambda_j}\right)^k+\rho_j,\quad
\lim\sum_{i\ne j}\rho_i\left(\frac{\lambda_i}{\lambda_j}\right)^k=\infty.
$$
It follows from the above observations that $1/R=\limsup(\sum_{i\ne j}\rho_i\lambda_i^k)^{1/k}$. Consequently, we may assume that $N(f^k)=\sum_j\rho_j\lambda_j^k$ with all $|\lambda_j|=\lambda(f)$ and then we have
$$
\frac{1}{R}=\limsup \left(\sum_{|\lambda_j|=\lambda(f)} \rho_j\lambda_j^k\right)^{1/k}.
$$
Remark that if $\lambda(f)<1$ then $N(f^k)=\sum_{|\lambda_j|=\lambda(f)} \rho_j\lambda_j^k\to 0$ and so the sequence of integers are eventually zero, i.e., $N(f^k)=0$ for all $k$ sufficiently large. This shows that $1/R=0$ and furthermore, $N_f(z)$ is the exponential of a polynomial. Hence the rational function $N_f(z)$ has no poles and zeros. This forces $N_f(z)\equiv1$; hence $\lambda(f)=0$. If $\lambda(f)>1$, then $N(f^k)\to\infty$ and by L'H\^{o}pital's rule we obtain
\begin{align*}
\limsup_{k\to\infty}\frac{\log N(f^k)}{k}&=\limsup_{k\to\infty}\frac{\log\left(\sum_j\rho_j\lambda_j^k\right)}{k}
=\log\lambda(f)\ \Rightarrow\ \frac{1}{R}=\lambda(f).
\end{align*}
If $\lambda(f)=1$, then $N(f^k)\le \sum_j|\rho_j|<\infty$ is a bounded sequence and so it has a convergent subsequence. If $\limsup N(f^k)=0$, then $N(f^k)=0$ for all $k$ sufficiently large and so by the same reason as above, $\lambda(f)=0$, a contradiction. Hence $\limsup N(f^k)$ is a finite nonzero integer and so $1/R=1=\lambda(f)$.

Summing up, we have obtained that
\begin{Thm}\label{thm:Radius1}
Let $f$ be a map on an infra-solvmanifold of type $\R$. Let $R$ denote the radius of convergence of the Nielsen zeta function $N_f(z)$ of $f$. Then $\lambda(f)=0$ or $\lambda(f)\ge1$, and
\begin{align}\label{R1}
\frac{1}{R}=\lambda(f).\tag{R1}
\end{align}
In particular,
\begin{enumerate}
\item[$(1)$] $\lambda(f)=0$, which occurs if and only if $N_f(z)\equiv1$;
\item[$(2)$] if $\lambda(f)=1$, then $N(f^k)$ is a bounded sequence;
\item[$(3)$] $\lambda(f)>1$ if and only if $N(f^k)$ is an unbounded sequence.
\end{enumerate}
\end{Thm}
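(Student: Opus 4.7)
The plan is to apply the Cauchy--Hadamard formula $1/R = \limsup_{k\to\infty} N(f^k)^{1/k}$ to the closed-form expression $N(f^k)=\sum_{i=1}^{r(f)} \rho_i \lambda_i^k$ from \eqref{N_k}, which is available because $N_f(z)$ is rational by Theorem~\ref{thm:DD}. The trivial case $r(f)=0$ is handled by the convention in Definition~\ref{def:lambda}: $N_f(z) \equiv 1$, so $R=\infty$ and $\lambda(f)=0$, giving $1/R=\lambda(f)=0$ and settling both directions of (1).

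Assume $r(f)\ne 0$, so $\lambda(f)>0$. First I observe that the eigenvalues $\lambda_i$ with $|\lambda_i|<\lambda(f)$ contribute $o(\lambda(f)^k)$ terms and are therefore invisible in the $\limsup$; the analysis reduces to the dominant piece $N(f^k) = \sum_{|\lambda_j|=\lambda(f)}\rho_j\lambda_j^k+o(\lambda(f)^k)$.

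I then split into three sub-cases according to the size of $\lambda(f)$. If $0<\lambda(f)<1$, the dominant sum tends to $0$, so the nonnegative integers $N(f^k)$ vanish for all large $k$; then $S_f(z) = \sum_{k\ge1} N(f^k)z^{k-1}$ is a polynomial, hence $\log N_f(z)$ is a polynomial, but $N_f(z)$ is rational and the exponential of a non-constant polynomial is transcendental, so this polynomial is identically $0$, giving $N_f(z)\equiv1$ and contradicting $r(f)\ne0$. Hence $0<\lambda(f)<1$ is impossible and $\lambda(f)\ge1$. If $\lambda(f)=1$, the dominant sum is bounded by $\sum_{|\lambda_j|=\lambda(f)}|\rho_j|$, which establishes (2). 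If $\lambda(f)>1$, the crude estimate $|N(f^k)|\le C\lambda(f)^k$ yields the upper bound $\limsup N(f^k)^{1/k}\le\lambda(f)$, so in particular $N(f^k)$ is unbounded once the matching lower bound is in hand; combined with (1) and (2), this gives the biconditional (3).

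The matching lower bound is obtained uniformly for $\lambda(f)\ge1$ by passing to the rational function $S_f(z)=\sum_i \rho_i\lambda_i/(1-\lambda_iz)$, whose poles are precisely the $\lambda_i^{-1}$ with nonzero residues $-\rho_i$; the radius of convergence of its Taylor expansion at $0$ equals the distance from $0$ to the nearest pole, namely $1/\lambda(f)$. Since the $N(f^k)$ are exactly its Taylor coefficients, Cauchy--Hadamard yields $1/R=\lambda(f)$ and \eqref{R1} follows. The step that I expect to require the most care is ruling out $0<\lambda(f)<1$, since the argument relies in an essential way on both the integrality of the $N(f^k)$ and the rationality of $N_f(z)$; both facts depend on the infra-solvmanifold-of-type-$\R$ hypothesis through the averaging formula (AV) and Theorem~\ref{thm:DD}, and neither can be dropped.
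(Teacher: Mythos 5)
Your proof is correct, and it is worth noting where you part ways with the paper.  Both arguments begin from the rational expression $N(f^k)=\sum_i\rho_i\lambda_i^k$, dispose of $r(f)=0$ by convention, and rule out $0<\lambda(f)<1$ by the same reasoning (nonnegative integers tending to $0$ are eventually zero, whence $S_f$ is a polynomial, $N_f$ is the exponential of a polynomial, and rationality forces $N_f\equiv1$, contradicting $r(f)\ne0$).  Where you differ is in establishing \eqref{R1}: the paper proceeds by a three-way case split on $\lambda(f)$, invoking in the case $\lambda(f)>1$ the claim that $N(f^k)\to\infty$ together with an informal L'H\^opital argument for $\limsup\log N(f^k)/k$, and handling $\lambda(f)=1$ separately via boundedness of the sequence.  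You instead observe that $S_f(z)=\sum_i\rho_i\lambda_i/(1-\lambda_iz)$ is a rational function with simple poles exactly at the $1/\lambda_i$ (with nonzero residues $-\rho_i$, so there is no cancellation), so the radius of convergence of its Taylor series at $0$ is the distance to the nearest pole, $1/\lambda(f)$; combined with Cauchy--Hadamard this gives $1/R=\lambda(f)$ in one stroke, uniformly over $\lambda(f)>0$.  This is cleaner and avoids the delicate point that $N(f^k)\to\infty$ need not literally hold when $\lambda(f)>1$ (the dominant term $\Gamma_k/\lambda(f)^k$ can come arbitrarily close to zero along subsequences).  Interestingly, the paper records precisely your pole-distance observation in the Remark that follows the theorem, but does not use it inside the proof itself.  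Your derivations of the three clauses (1)--(3) from \eqref{R1} are all sound.
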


\begin{Rmk}
{In this paper, the number $\lambda(f)$ will play a similar role as the ``essential spectral radius" in \cite{JM} or the ``reduced spectral radius" in \cite{BaBo}.  Theorem~\ref{thm:Radius1} below shows that {$1/\lambda(f)$} is the ``radius" of the Nielsen zeta function $N_f(z)$. Note also that $\lambda(f)$ is a homotopy invariant.}
\end{Rmk}

By Theorem~\ref{thm:Radius1}, we see that the sequence $N(f^k)$ is either bounded or exponentially unbounded.

\begin{Rmk}
Recall from \eqref{eq:rat} and \eqref{eq:gen fn} that
\begin{align*}
&S_f(z)=\sum_{i=1}^{r(f)}\frac{\rho_i\lambda_i}{1-\lambda_iz},\\
&N_f(z)=\prod_{i=1}^{r(f)}(1-\lambda_iz)^{-\rho_i}=\frac{\prod_{\rho_j<0}(1-\lambda_jz)^{-\rho_j}}{\prod_{\rho_i>0}(1-\lambda_iz)^{\rho_i}}.
\end{align*}
These show that all of the $1/\lambda_i$ are the poles of $S_f(z)$, whereas the $1/\lambda_i$ with corresponding $\rho_i>0$ are the poles of $N_f(z)$. The radius of convergence of a power series centered at a point $a$ is equal to the distance from $a$ to the nearest point where the power series cannot be defined in a way that makes it holomorphic. Hence the radius of convergence of $S_f(z)$ is {$1/\lambda(f)$} and the radius of convergence of $N_f(z)$ is {$1/\max\{|\lambda_i|\mid \rho_i>0\}$}. In particular, we have shown that
$$
\lambda(f)=\max\{|\lambda_i|\mid i=1,\cdots,r(f)\}=\max\{|\lambda_i|\mid \rho_i>0\}.
$$
{Notice this identity in Example~\ref{Klein}.}
\end{Rmk}

On the other hand, we can understand the radius $R$ of convergence using the averaging formula. Compare our result with \cite[Theorem~7.10]{FL}.
Let $\{\mu_1,\cdots,\mu_m\}$ be the eigenvalues of $D_*$, counted with multiplicities, where $m$ is the dimension of the manifold $M$. We denote by $\sp(A)$ the \emph{spectral radius} of the matrix $A$ which is the largest modulus of an eigenvalue of $A$. From the definition, we have
\begin{align*}
&\sp(D_*)=\max\{|\mu_j|\mid i=1,\cdots,m\},\\
&\sp\left(\bigwedge D_*\right)=\begin{cases}
\sum_{|\mu|>1}|\mu|&\text{when $\sp(D_*)>1$;}\\
1&\text{when $\sp(D_*)\le1$.}
\end{cases}
\end{align*}
Note $|\det(I-D_*^k)|=\prod_{j=1}^m|1-\mu_j^k|$. If some $\mu_j=1$, then $\det(I-D_*^k)=0$ for all $k>0$ and hence $\limsup|\det(I-D_*^k)|^{1/k}=0$. Assume now all $\mu_j\ne1$; then $\det(I-D_*^k)\ne0$ for all $k>0$. Remark further from \cite[Theorem~4.1]{FL} that
\begin{align*}
\log\left(\limsup_{k\to\infty}|\det(I-D_*^k)|^{1/k}\right)
&=\limsup_{k\to\infty}\sum_{j=1}^m\frac{\log|1-\mu_j^k|}{k}\\
&=\begin{cases}
\sum_{|\mu|>1}\log|\mu|&\text{when $\sp(D_*)>1$}\\
0&\text{when $\sp(D_*)\le1$.}
\end{cases}
\end{align*}

Now we ascertain that if $D_*$ has no eigenvalue $1$, then
\begin{align}\label{R2}
\frac{1}{R}=\limsup_{k\to\infty}|\det(I-D_*^k)|^{1/k}=\sp\left(\bigwedge D_*\right).\tag{R2}
\end{align}
From the averaging formula, we have $N(f^k)\ge |\det(I-D_*^k)|/|\Phi|$. This induces
\begin{align*}
\frac{1}{R}=\limsup_{k\to\infty}N_k^{1/k}&\ge\limsup_{k\to\infty}\left(\frac{|\det(I-D_*^k)|}{|\Phi|}\right)^{1/k}\\
&=\limsup_{k\to\infty}|\det(I-D_*^k)|^{1/k}.
\end{align*}
Furthermore, for any $A\in\Phi$, we obtain (see the proof of \cite[Theorem~4.3]{FL})
$$
|\det(I-A_*D_*^k)|\le\prod_{j=1}^m(1+|\mu_j|^k)
$$
and hence from the averaging formula
$$
N(f^k)\le \prod_{j=1}^m(1+|\mu_j|^k)\ \Rightarrow
\frac{1}{R}\le\prod_{|\mu|>1}|\mu|.
$$
This finishes the proof of our assertion.

\bigskip

Following from \eqref{R1} and \eqref{R2}, we immediately have:
\begin{Thm}\label{thm:Radius2}
Let $f$ be a map on an infra-solvmanifold of type $\R$ with an affine homotopy lift $(d,D)$. Let $R$ denote the radius of convergence of the Nielsen zeta function of $f$. If $D_*$ has no eigenvalue $1$, then
$$
\frac{1}{R}=\sp\left(\bigwedge D_*\right)=\lambda(f).
$$
\end{Thm}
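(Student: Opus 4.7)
The plan is straightforward: combine the two characterizations of $1/R$ already derived in this section. By the Cauchy--Hadamard formula, $1/R = \limsup_{k\to\infty}N(f^k)^{1/k}$, so the theorem reduces to recognizing that the formulas \eqref{R1} and \eqref{R2} refer to the same quantity once the hypothesis on $D_*$ is in force.

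The first ingredient is Theorem~\ref{thm:Radius1}, which yields $1/R = \lambda(f)$ via \eqref{R1}. This rests on the rational factorization $N_f(z) = \prod_i(1-\lambda_iz)^{-\rho_i}$ furnished by Theorem~\ref{thm:DD}, followed by the case analysis on whether $\lambda(f)$ is less than, equal to, or greater than $1$. The second ingredient is the identity \eqref{R2}, which under the standing assumption that $D_*$ has no eigenvalue $1$ reads $1/R = \limsup|\det(I-D_*^k)|^{1/k} = \sp(\bigwedge D_*)$. Its proof has two halves that I would simply cite: the lower bound on $1/R$ comes from the averaging formula $N(f^k) \ge |\det(I-D_*^k)|/|\Phi|$, while the upper bound uses the pointwise estimate $|\det(I-A_*D_*^k)| \le \prod_j(1+|\mu_j|^k)$ for each $A\in\Phi$, again inserted into the averaging formula. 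The logarithmic computation from \cite[Theorem~4.1]{FL} then identifies $\limsup|\det(I-D_*^k)|^{1/k}$ with $\sp(\bigwedge D_*)$ under the no-eigenvalue-$1$ condition.

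Chaining \eqref{R1} and \eqref{R2} produces the string of equalities $1/R = \sp(\bigwedge D_*) = \lambda(f)$, which is exactly the theorem. There is essentially no obstacle, since both halves have already been assembled; the only place the hypothesis enters is in \eqref{R2}, where it is needed to keep $\det(I-D_*^k)$ nonzero for every $k\ge1$ and hence to make the middle quantity meaningful. Without it, every such determinant vanishes and the middle expression collapses to $0$, while $\lambda(f)$ can remain positive, which explains why the assumption on the spectrum of $D_*$ cannot be removed.
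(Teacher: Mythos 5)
Your proposal is correct and follows exactly the route the paper itself takes: the paper establishes \eqref{R1} (i.e.\ $1/R=\lambda(f)$) and \eqref{R2} (i.e.\ $1/R=\limsup|\det(I-D_*^k)|^{1/k}=\sp(\bigwedge D_*)$ under the no-eigenvalue-$1$ hypothesis) in the preceding paragraphs, then states Theorem~\ref{thm:Radius2} as an immediate consequence. Your accounting of how the two halves of \eqref{R2} are proved (lower bound from the averaging formula, upper bound from the pointwise estimate $|\det(I-A_*D_*^k)|\le\prod_j(1+|\mu_j|^k)$) and of where the hypothesis enters matches the paper's argument.
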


We recall that the asymptotic Nielsen number of $f$ is defined to be
$$
N^\infty(f):=\max\left\{1,\limsup_{k\to\infty}N(f^k)^{1/k}\right\}.
$$
We also recall that the most widely used measure for the complexity of a dynamical system is the topological entropy $h(f)$. A basic relation between these two numbers is $h(f)\ge \log N^\infty(f)$, which was found by Ivanov in \cite{I}. There is a conjectural inequality $h(f)\ge \log(\sp(f))$ raised by Shub \cite{Shub}. This conjecture was proven for all maps on infra-solvmanifolds of type $\R$, see \cite{mp,mp-a} and \cite{FL}. Now we can state about relations between $N^\infty(f)$, $\lambda(f)$ and $h(f)$.

\begin{Cor}
Let $f$ be a map on an infra-solvmanifold of type $\R$ with an affine homotopy lift $(d,D)$. If $D_*$ has no eigenvalue $1$, then
$$
N^\infty(f)=\lambda(f),\quad
h(f)\ge\log\lambda(f).
$$
\end{Cor}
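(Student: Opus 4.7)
The strategy is simply to combine Theorem~\ref{thm:Radius2} with the definition of $N^\infty(f)$ and with Ivanov's inequality $h(f)\ge\log N^\infty(f)$ recalled immediately above the Corollary. The only nontrivial bookkeeping is to confirm that the ``$\max$'' in the definition of $N^\infty(f)$ is attained by $\limsup N(f^k)^{1/k}$ rather than by $1$; once that is settled, everything is essentially a substitution.

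First I would apply Theorem~\ref{thm:Radius2} to record that, under the standing hypothesis ``$D_*$ has no eigenvalue $1$'',
$$
\limsup_{k\to\infty}N(f^k)^{1/k}=\frac{1}{R}=\sp\!\left(\bigwedge D_*\right)=\lambda(f).
$$
Next I would invoke the explicit case-distinction for $\sp(\bigwedge D_*)$ displayed just before Theorem~\ref{thm:Radius2}: the value equals $1$ when $\sp(D_*)\le 1$, and equals $\sum_{|\mu|>1}|\mu|>1$ when $\sp(D_*)>1$. In either case $\sp(\bigwedge D_*)\ge 1$, so $\lambda(f)\ge 1$.

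With $\lambda(f)\ge 1$ in hand, the definition of the asymptotic Nielsen number immediately gives
$$
N^\infty(f)=\max\!\left\{1,\ \limsup_{k\to\infty}N(f^k)^{1/k}\right\}=\max\{1,\lambda(f)\}=\lambda(f),
$$
which is the first equality. The second, $h(f)\ge\log\lambda(f)$, then follows by plugging $N^\infty(f)=\lambda(f)$ into Ivanov's inequality $h(f)\ge\log N^\infty(f)$.

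I do not anticipate a real obstacle: once Theorem~\ref{thm:Radius2} is available, the whole Corollary is a formal consequence. The only point that warrants double-checking is the bound $\sp(\bigwedge D_*)\ge 1$, and that is an immediate inspection of the piecewise formula already in the paper. (As a sanity check, one may alternatively derive $\lambda(f)\ge 1$ from the averaging formula \eqref{AV}: the $A=I$ term gives $\#\Phi\cdot N(f^k)\ge|\det(I-D_*^k)|>0$, so the integer $N(f^k)$ is at least $1$ for every $k$, forcing $\limsup N(f^k)^{1/k}\ge 1$.)
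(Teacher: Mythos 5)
Your proof is correct and follows essentially the same route as the paper: apply Theorem~\ref{thm:Radius2} to identify $\limsup_k N(f^k)^{1/k}$ with $\sp(\bigwedge D_*)=\lambda(f)$, note that this quantity is $\ge1$, and invoke Ivanov's inequality. The paper simply cites \cite[Theorem~4.3]{FL} directly for $N^\infty(f)=\sp(\bigwedge D_*)$ instead of re-deriving the lower bound $\ge1$ from the piecewise formula, but the content is the same.

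One small caveat about your parenthetical ``sanity check'': the claim that $|\det(I-D_*^k)|>0$ for \emph{every} $k$ does not follow from $D_*$ having no eigenvalue~$1$. If $D_*$ has an eigenvalue $\mu\ne1$ that is nevertheless a root of unity (e.g.\ $\mu=-1$ or a primitive cube root), then $\det(I-D_*^k)=0$ for some $k>1$, so this alternative argument does not go through as stated. This does not affect your main argument, which correctly relies on the explicit formula for $\sp(\bigwedge D_*)$ given just before Theorem~\ref{thm:Radius2}.
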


\begin{proof}
From \cite[Theorem~4.3]{FL} and Theorem~\ref{thm:Radius2}, we have $N^\infty(f)=\sp(\bigwedge D_*)=\lambda(f)$. Hence by Ivanov's inequality, we obtain that $h(f)\ge \log N^\infty(f)=\log\lambda(f)$.
\end{proof}

The following example shows that the assumption in Theorem~\ref{thm:Radius2} and its Corollary that $1$ is not in the spectrum of $D_*$ is essential.
\begin{Example}\label{Klein}
Let $f:M\to M$ be a map of type $(r,\ell,q)$ on the Klein bottle $M$ induced by an affine map $(d,D):\bbr^2\to\bbr^2$. Recall from \cite[Theorem~2.3]{KLY} and its proof that $r$ is odd and $q=0$, and
\begin{align*}
(d,D)&=\begin{cases}
\left(-\frac{1}{2}\left[\begin{matrix}*\\\ell\end{matrix}\right],
\left[\begin{matrix}r&0\\0&q\end{matrix}\right]\right)&\text{when $r$ is odd;}\\
\left(\left[\begin{matrix}*\\ *\end{matrix}\right],
\left[\begin{matrix}r&0\\2\ell&0\end{matrix}\right]\right)&\text{when $r$ is even and $q=0$,}
\end{cases}\\
N(f^k)&=\begin{cases}
|q^k(1-r^k)|=\begin{cases}
q^k(r^k-1)&\text{when $r$ is odd and $qr>0$}\\
(-1)^kq^k(r^k-1)&\text{when $r$ is odd and $qr<0$}\\
\end{cases}\\
|1-r^k|=\begin{cases}1&\text{when $q=r=0$}\\
r^k-1&\text{when $r>0$ and $q=0$}\\
(-1)^k(r^k-1)&\text{when $r<0$ and $q=0$.}
\end{cases}
\end{cases}
\end{align*}
A simple calculation shows that
\begin{center}
\begin{tabular}{|c|c|c|c|c|c|}
\hline%
$q,r$ &$N(f^k)$&$N_f(z)$&$S_f(z)$&$(\lambda(f),\sp(\bigwedge D_*))$ \\%
\hline\hline%
$r=1$&$0$&$1$&$0$&$(0,\max\{1,|q|\})$\\%
\hline
$r$ \text{odd}, $qr>0$&$(qr)^k-q^k$&$\ds{\frac{1-qz}{1-qrz}}$&$\ds{\frac{qr}{1-qrz}-\frac{q}{1-qz}}$&$(qr,qr)$\\%
\hline
$r$ \text{odd}, $qr<0$&$(-qr)^k-(-q)^k$&$\ds{\frac{1+qz}{1+qrz}}$&$\ds{-\frac{qr}{1+qrz}+\frac{q}{1+qz}}$&$(-qr,-qr)$\\%
\hline
$q=0$, $r=0$&$1$&$\ds{\frac{1}{1-z}}$&$\ds{\frac{1}{1-z}}$&$(1,1)$\\%
\hline
$q=0$, $r>0$&$r^k-1$&$\ds{\frac{1+z}{1-rz}}$&$\ds{\frac{r}{1-rz}-\frac{1}{1+z}}$&$(r,r)$\\%
\hline
$q=0$, $r<0$&$(-r)^k-(-1)^k$&$\ds{\frac{1+z}{1+rz}}$&$\ds{-\frac{r}{1+rz}+\frac{1}{1+z}}$&$(-r,-r)$\\%
\hline
\end{tabular}
\end{center}

\smallskip

\noindent
These observations show that when one of the eigenvalues is $1$, the invariants $N_f(z)$, $\sp(\bigwedge D_*)$ and $\lambda(f)$ still strongly depend upon the other eigenvalue. Remark also in this example that the identity $\lambda(f)=\max\{|\lambda_i|\mid \rho_i>0\}$ holds.
\end{Example}

\section{Asymptotic behavior of the sequence $\{N(f^k)\}$}

In this section, we study the asymptotic behavior of the Nielsen numbers of iterates of maps on infra-solvmanifolds of type $\R$.

We can write $N(f^k)=\sum_{i=1}^{r(f)}\rho_i\lambda_i^k$ as $N(f^k)=\Gamma(f^k)+\Omega(f^k)$, where
\begin{align}\label{eq:lambda}
\Gamma_k=\Gamma(f^k)=\lambda(f)^k\left(\sum_{|\lambda_j|=\lambda(f)}\rho_je^{2i\pi(k\theta_j)}\right),\quad
\Omega_k=\Omega(f^k)=\sum_{|\lambda_i|<\lambda(f)}\rho_i\lambda_i^k. \tag{$\Lambda$}
\end{align}
Since $|\lambda_i|<\lambda(f)$, it follows that
$$
\frac{\Omega_k}{\lambda(f)^k}
=\sum_{|\lambda_i|<\lambda(f)}\rho_i\left(\frac{\lambda_i}{\lambda(f)}\right)^k
\to0.
$$

\begin{Thm}\label{thm:Asymptotic}
For a map $f$ of an infra-solvmanifold of type $\R$, one of the following three possibilities holds:
\begin{enumerate}
\item[$(1)$] $\lambda(f)=0$, which occurs if and only if $N_f(z)\equiv1$.
\item[$(2)$] The sequence $\{N(f^k)/\lambda(f)^k\}$ has the same limit points as a periodic sequence $\{\sum_j\alpha_j\epsilon_j^k\}$ where $\alpha_j\in\bbz,\epsilon_j\in\bbc$ and $\epsilon_j^q=1$ for some $q>0$.
\item[$(3)$] The set of limit points of the sequence $\{N(f^k)/\lambda(f)^k\}$ contains an interval.
\end{enumerate}
\end{Thm}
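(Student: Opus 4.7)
My plan is to analyze the bounded sequence $b_k := \Gamma_k/\lambda(f)^k = \sum_{j\in J}\rho_j\epsilon_j^k$, where $J = \{j : |\lambda_j| = \lambda(f)\}$ and $\epsilon_j = \lambda_j/\lambda(f) \in \bbT$. Since $\Omega_k/\lambda(f)^k \to 0$ by \eqref{eq:lambda}, the set of limit points of $\{N(f^k)/\lambda(f)^k\}$ coincides with that of $\{b_k\}$. The trichotomy then springs from the arithmetic type of the $\epsilon_j$'s: if $\lambda(f) = 0$, Theorem~\ref{thm:Radius1} gives $N_f(z) \equiv 1$, which is case~(1); if $\lambda(f) > 0$ and every $\epsilon_j$ is a root of unity, taking $q$ to be the lcm of their orders yields $b_{k+q} = b_k$, so $b_k$ itself has the required periodic form with $\alpha_j = \rho_j \in \bbz$, giving case~(2).

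For case~(3), suppose some $\epsilon_{j_0}$ is not a root of unity. I would consider the closure $G \subset \bbT^{|J|}$ of the cyclic orbit of $g_0 := (\epsilon_j)_{j\in J}$. Because $\epsilon_{j_0}$ has infinite order, the orbit is infinite and $G$ is an infinite closed monothetic subgroup; its identity component $G^0$ is therefore a subtorus of positive dimension, and $G = \bigsqcup_{i=0}^{q-1} g_0^i G^0$ with $q = [G:G^0]$ finite. Define $\phi \colon \bbT^{|J|} \to \bbc$ by $\phi(z) = \sum_j \rho_j z_j$; by Lemma~\ref{lemma:conjugate}, together with the fact that $J$ is closed under complex conjugation, $\phi$ is real-valued on $G$. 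Since the orbit is dense in $G$ and $b_k = \phi(g_0^k)$, the set of limit points of $\{b_k\}$ equals $\phi(G) = \bigcup_{i=0}^{q-1} \phi(g_0^i G^0)$, a finite union of connected---hence interval---subsets of $\bbr$.

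It then remains to show at least one of these intervals is non-degenerate, which I would do by contradiction. If each $\phi(g_0^i G^0)$ were a single point, then $h \mapsto \sum_j \rho_j \epsilon_j^i h_j$ would be constant on $G^0$ for every $i$. Expanding in characters of $G^0$ and using the elementary identity $\chi_j|_{G^0} = \chi_{j'}|_{G^0}$ iff $\epsilon_j^q = \epsilon_{j'}^q$, the Fourier coefficient attached to the character $\chi_{j_0}|_{G^0}$---non-trivial because $\epsilon_{j_0}^q \neq 1$ forces the $j_0$-th coordinate projection of $G^0$ to be all of $\bbT$---must vanish for each $i$:
$$
\sum_{j' \in S(j_0)} \rho_{j'}\,\epsilon_{j'}^i = 0, \qquad i = 0, 1, \ldots, q-1,
$$
where $S(j_0) := \{j' \in J : \epsilon_{j'}^q = \epsilon_{j_0}^q\}$. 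Since $\epsilon_{j'}/\epsilon_{j_0}$ ranges over distinct $q$-th roots of unity, $|S(j_0)| \leq q$, so the top $|S(j_0)| \times |S(j_0)|$ Vandermonde submatrix is invertible and forces $\rho_{j'} = 0$ for all $j' \in S(j_0)$, contradicting $\rho_{j'} \neq 0$. Hence some $\phi(g_0^i G^0)$ is a non-degenerate closed interval contained in the limit point set of $\{b_k\}$.

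The main obstacle I anticipate is exactly this non-degeneracy step: controlling the interplay among the monothetic group structure of $G$, the distinctness of the $\epsilon_j$'s, and the integrality of the $\rho_j$'s so that $\phi$ cannot be flattened to a finite set on all cosets simultaneously. The Vandermonde-plus-Fourier argument settles this by combining these inputs, with the bound $|S(j_0)| \leq q$ inherited from $G/G^0$ being the pivotal structural input, but neither ingredient alone would suffice.
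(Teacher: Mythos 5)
Your proposal is correct, and for case (3) it takes a genuinely different route from the paper. Both proofs begin the same way: decompose $N(f^k)=\Gamma_k+\Omega_k$, observe $\Omega_k/\lambda(f)^k\to0$, and reduce to the study of $b_k=\sum_{j\in J}\rho_j\epsilon_j^k$ with $\epsilon_j\in\bbT$; case (1) and the dichotomy ``all $\epsilon_j$ roots of unity'' versus ``some $\epsilon_{j_0}$ of infinite order'' are handled identically. Where you diverge is in case (3). The paper chooses a subset $\calS$ of indices for which the corresponding $\theta_j$'s together with $1$ are $\bbz$-linearly independent, invokes Weyl's equidistribution theorem to get density of $(k\theta_{j})_{j\in\calS}$ in the subtorus $T^{|\calS|}$, and concludes the limit set is the interval $[0,m_\calS]$; it does not explicitly argue that the resulting interval is non-degenerate. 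You instead pass to the compact monothetic group $G=\overline{\langle g_0\rangle}\subset\bbT^{|J|}$, use its structure $G=\bigsqcup_{i=0}^{q-1}g_0^iG^0$ with $G^0$ a positive-dimensional subtorus, identify the limit set with $\phi(G)$ (a finite union of compact intervals), and then rule out total degeneracy by expanding $\phi$ in characters of $G^0$ and applying a Vandermonde argument to the $q$ linear relations that would follow. This buys a clean and fully rigorous non-degeneracy step that the paper's sketch leaves implicit, at the cost of importing the structure theory of compact monothetic groups; the paper's route is shorter but relies on the reader to see why density in $T^{|\calS|}$ yields a non-degenerate interval of values of $\sum_j\rho_je^{2i\pi\xi_j}$. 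Two small points worth making explicit in a write-up: (a) realness of $\phi$ on $G$ follows by continuity from realness on the dense orbit $\{g_0^k\}$, which in turn uses Lemma~\ref{lemma:conjugate} and the closure of $J$ under complex conjugation; (b) the identity ``$\chi_j|_{G^0}=\chi_{j'}|_{G^0}$ iff $\epsilon_j^q=\epsilon_{j'}^q$'' requires the observation that $g_0^q$ topologically generates $G^0$, which you should state since it is the linchpin of the Fourier step.
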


\begin{proof}
For simplicity, we denote $\lambda(f)$ by $\lambda_0$. Recall that $\lambda_0=0$ if and only if all $N(f^k)=0$ and otherwise, $\lambda_0\ge1$. Suppose that $\lambda_0\ge1$. We may assume that
$$
\lambda_1=\lambda_0e^{2i\pi\theta_1},\cdots, \lambda_{n(f)}=\lambda_0e^{2i\pi\theta_{n(f)}}
$$
are all the $\lambda_i$ of modulus $\lambda_0$ (see Definition~\ref{def:lambda} for $n(f)$).
From \eqref{eq:lambda}, we see that  the sequence $\{N(f^k)/\lambda_0^k\}$ has the same asymptotic behavior as the sequence
$$
\left\{\frac{\Gamma_k}{\lambda_0^k}\right\}=\left\{\sum_{j=1}^{n(f)}\rho_je^{2i\pi(k\theta_j)}\right\}.
$$

We consider the continuous function $T^{n(f)}\to[0,\infty)$ defined by
$(\xi_1,\cdots,\xi_{n(f)})\mapsto \big|\sum_{j=1}^{n(f)}\rho_je^{2i\pi\xi_j}\big|$.
For any subset $\calS$ of $\{1,\cdots,n(f)\}$, we have a sub-torus
$$
T^{|\calS|}=\{(\xi_1,\cdots,\xi_{n(f)})\in T^{n(f)}\mid \xi_j=0,\ \forall j\notin\calS\}.
$$
The restriction of the above continuous function to the sub-torus $T^{|\calS|}$
is continuous and has its maximum $m_\calS$ because $T^{|\calS|}$ is compact.

Now we show that either
\begin{enumerate}
\item[(i)] $\{\sum_{j=1}^{n(f)} \rho_je^{2i\pi k\theta_j}\}$ is periodic
%where $\alpha_i\in\bbz,\epsilon_i\in\bbc$ and $\epsilon_i^q=1$ for some $q$
\end{enumerate}
or
\begin{enumerate}
\item[(ii)] there exists $\calS\subset\{1,\cdots,n(f)\}$ such that
the sequence $\{\sum_{j=1}^{n(f)} \rho_je^{2i\pi k\theta_j}\}$ is dense in $[0,m_\calS]$.
\end{enumerate}
If $\dim_\bbz\{\theta_1,\cdots,\theta_{n(f)},1\}=1$,
then all $\theta_j$ are rational $p_j/q_j$.
Every $\lambda_j/\lambda_0=e^{2i\pi\theta_j}$ is a $q_j$th root of unity, and thus all $\lambda_j/\lambda_0=e^{2i\pi\theta_j}$ are roots of unity of degree $q=\lcm(q_1,\cdots,q_{n(f)})$, and hence the sequence $\{\sum_{j=1}^{n(f)}\rho_je^{2i\pi (k\theta_j)}\}$ is periodic of period $q$. This proves (2).

Suppose $\dim_\bbz\{\theta_1,\cdots,\theta_{n(f)},1\}=s>1$.
Then there exists the smallest subset $\calS=\{j_1,\cdots,j_s\}\subset\{1,\cdots,n(f)\}$
for which $\theta_{j_1},\cdots,\theta_{j_s},1$ are linearly independent over the integers.
This means that if $\sum_{i=1}^s\ell_i\theta_{j_i}=\ell$ with $\ell_i,\ell\in\bbz$
then $\ell_{j_1}=\cdots=\ell_{j_s}=\ell=0$.
Then it follows from \cite[Theorem~6, p.~\!91]{Ch} that
the sequence $(m\theta_{j_1},\cdots,m\theta_{j_s})$ is dense in $T^{|\calS|}$.
This proves (3) with $[0,m_\calS]$.
\end{proof}

\begin{Example}\label{exa:JM}
(1) Let $f$ be the identity on $S^1$.
Then $N(f^k)=0$ for all $k>0$ and so $\lambda(f)=0$ and $N_f(z)\equiv1$.

(2) Consider the map $f$ on $T^2$ induced by the matrix
$$
D=\left[\begin{matrix}\hspace{8pt}0&\hspace{8pt}1\\-1&-1\end{matrix}\right].
$$
The characteristic polynomial of $D$ is $t^2+t+1$.
The eigenvalues of $D$ are $\omega=-1/2+\sqrt{3}/2i$ and $\bar\omega$.
Hence
$$
L(f^k)=\det(I-D^k)=(1-\omega^k)(1-\bar\omega^k)=2-(\omega^k+\bar\omega^k)
=\begin{cases}
0&\text{when $k=3\ell$}\\
3&\text{when $k\ne3\ell$.}
\end{cases}
$$
So, $N(f^k)=L(f^k)$ for all $k>0$. Therefore, we have %$L_f(z)=N_f(z)$ and
\begin{align*}
L_f(z)=N_f(z)&
=\frac{(1-\omega z)(1-\bar\omega z)}{(1-z)^2}.
\end{align*}
Consequently, we have that
\begin{align*}
%&N(f^k)=(-1)\cdot\omega^k+(-1)\cdot\bar\omega^k+2\cdot 1^k,\\
&\lambda(f)=\max\{|\omega|,|\bar\omega|,1\}=1,\\
&\left\{\frac{\Gamma(f^k)}{\lambda(f)^k}\right\}=\left\{2-(\omega^k+\bar\omega^k)\right\}\
\text{ is $3$-periodic.}
\end{align*}

(3)
Let $f$ be the map on $T^4$ induced by the matrix
$$
D=\left[\begin{matrix}\hspace{8pt}0&1&0&0\\\hspace{8pt}0&0&1&0\\
\hspace{8pt}0&0&0&1\\-1&2&0&2\end{matrix}\right].
$$
The characteristic polynomial of $D$ is $t^4-2t^3-2t+1$.
This polynomial has two complex roots $\omega,\bar\omega$ of modulus $1$ and two real roots $\alpha,\beta$
with $0<\alpha<1$ and $\beta=1/\alpha$.
Note that this is an example showing that there are algebraic integers
$\omega,\bar\omega$
of modulus $1$ which are not roots of unity.
Indeed, $\omega=\frac{1-\sqrt{3}}{2}\pm yi\in S^1$ where $y^2=\frac{\sqrt{3}}{2}$.

Since
\begin{align*}
L(f^k)&=\det(I-D^k)=(1-\alpha^k)(1-\beta^k)(1-\omega^k)(1-\bar\omega^k),\\
N(f^k)&=|\det(I-D^k)|=-(1-\alpha^k)(1-\beta^k)(1-\omega^k)(1-\bar\omega^k)\\
&=-2+\omega^k+\bar\omega^k+2\alpha^k-(\alpha\omega)^k-(\alpha\bar\omega)^k\\
&\quad+2\beta^k-(\beta\omega)^k-(\beta\bar\omega)^k-2(\alpha\beta)^k+(\alpha\beta\omega)^k+(\alpha\beta\bar\omega)^k,
\end{align*}
we have
\begin{align*}
N_f(z)&=\frac{(1-z)^2(1-\alpha\omega z)(1-\alpha\bar\omega z)(1-\beta\omega z)(1-\beta\bar\omega z)(1-\alpha\beta z)^2}{(1-\omega z)(1-\bar\omega z)(1-\alpha z)^2(1-\beta z)^2(1-\alpha\beta\omega z)(1-\alpha\beta\bar\omega z)},\\
\lambda(f)&=\max\{1,\alpha,\beta,\alpha\beta\}=\beta,\quad
\frac{\Gamma(f^k)}{\lambda(f)^k}=2-\omega^k-\bar\omega^k.
\end{align*}
This is an example of Case (3) with $[0,2]$ in Theorem~\ref{thm:Asymptotic}.
\end{Example}

In Theorem~\ref{thm:Radius2}, we showed that if $D_*$ has no eigenvalue $1$ then $\lambda(f)=\sp(\bigwedge D_*)$. In Example~\ref{Klein}, we have seen that when $D_*$ has an eigenvalue $1$, there are maps $f$ for which $\lambda(f)\ne\sp(\bigwedge D_*)$ with $\lambda(f)=0$, and $\lambda(f)=\sp(\bigwedge D_*)$ with $\lambda(f)\ge1$. In fact, we prove in the following that the latter case is always true.

\begin{Lemma}\label{lemma:Radius3}
Let $f$ be a map on an infra-solvmanifold of type $\R$ with an affine homotopy lift $(d,D)$. If $\lambda(f)\ge1$, then $\lambda(f)=\sp(\bigwedge D_*)$.
\end{Lemma}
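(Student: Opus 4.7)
The strategy is to reduce, via Theorem~\ref{thm:Radius1}, to showing $1/R=\sp(\bigwedge D_*)$ and then bound $R$ from both sides using the averaging formula~(AV). The upper bound $1/R\le\sp(\bigwedge D_*)$ is the unconditional estimate from the derivation of~\eqref{R2}: from $|\det(I-A_*D_*^k)|\le\prod_j(1+|\mu_j|^k)$ one obtains $N(f^k)\le\prod_j(1+|\mu_j|^k)$, whence $\lambda(f)\le\max(1,\prod_{|\mu_j|>1}|\mu_j|)=\sp(\bigwedge D_*)$. If $\sp(\bigwedge D_*)=1$, the hypothesis $\lambda(f)\ge1$ already forces equality, so the remaining work is in the case $\sp(\bigwedge D_*)>1$.

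For the matching lower bound, the plan is to estimate a single summand $|\det(I-A_{0*}D_*^k)|$ of~(AV) along a carefully chosen arithmetic progression. Write $\phi_\Phi:\Phi\to\Phi$ for the homomorphism determined by $DA=\phi_\Phi(A)D$. Since $\Phi$ is finite, there is a stable subgroup $\Phi_\infty\subseteq\Phi$ on which $\phi_\Phi$ restricts to an automorphism; picking $e\ge1$ with $\phi_\Phi^e|_{\Phi_\infty}=\mathrm{id}$ then makes $D_*^e$ commute with $A_*$ for every $A\in\Phi_\infty$. Along the subsequence $k=en$ one can put $A_*$ and $D_*^e$ into a common invariant form, giving
\[
|\det(I-A_*D_*^{en})|=\prod_{i=1}^m|1-\alpha_{i,A}\,\mu_i^{en}|,
\]
where the $\mu_i$ are the eigenvalues of $D_*$ (with multiplicity) and the $\alpha_{i,A}$ are roots of unity. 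The factors with $|\mu_i|>1$ grow like $|\mu_i|^{en}$, those with $|\mu_i|<1$ tend to $1$, and those with $|\mu_i|=1$ are bounded in $[0,2]$.

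The hypothesis $\lambda(f)\ge1$ enters through Theorem~\ref{thm:Radius1}(1): it forces $N_f(z)\not\equiv1$, so some summand $|\det(I-A_{0*}D_*^{k_0})|\ne0$, which after possibly replacing $k_0$ by a multiple may be arranged with $A_0\in\Phi_\infty$. Intersecting the residue class $k\equiv k_0\pmod{L}$—where $L$ is the l.c.m.\ of the orders of the root-of-unity eigenvalues of $D_*$—with the multiples of $e$ yields an infinite arithmetic progression on which each root-of-unity factor with $|\mu_i|=1$ is a fixed nonzero constant, while for the algebraic non-roots-of-unity $\mu_i$ with $|\mu_i|=1$ the $k$-th root of the corresponding factor tends to $1$ by Baker-type Diophantine estimates. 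Consequently $|\det(I-A_{0*}D_*^k)|^{1/k}\to\prod_{|\mu_i|>1}|\mu_i|=\sp(\bigwedge D_*)$ along the progression, and combining with $N(f^k)\ge|\det(I-A_{0*}D_*^k)|/|\Phi|$ gives $\lambda(f)\ge\sp(\bigwedge D_*)$.

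The main obstacle is precisely this nonvanishing step: without the hypothesis $\lambda(f)\ge1$, systematic cancellations can force every averaging summand to vanish identically, as the Klein bottle case $r=1$ in Example~\ref{Klein} shows ($N(f^k)\equiv0$ even when $\sp(\bigwedge D_*)=|q|>1$). Exploiting the hypothesis to simultaneously select an admissible $A_0$ and an admissible arithmetic progression is the crux; once that is in hand, the remaining exponential bookkeeping is routine.
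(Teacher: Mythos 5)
Your upper bound $\lambda(f)\le\sp(\bigwedge D_*)$ matches the paper and is fine, but the lower bound follows a genuinely different (and gappier) route than the paper's. The paper's proof is a clean bootstrap: from $\lambda(f)\ge1$ and Corollary~\ref{2.9} it finds a single $k$ with $N(f^k)\ne0$, hence some $B\in\Phi$ with $\det(I-B_*D_*^k)\ne0$; it then regards $\beta(d,D)^k$ (with $\beta=(b,B)\in\Pi$) as \emph{another} affine homotopy lift of $f^k$, whose linear part $B_*D_*^k$ has no eigenvalue $1$, and applies the already-established Theorem~\ref{thm:Radius2} to $f^k$ with this lift. Since $B$ has finite order, $(B_*D_*^k)^\ell=D_*^{k\ell}$ for a suitable $\ell$, so $\sp(\bigwedge B_*D_*^k)=\sp(\bigwedge D_*)^k$; combined with $\lambda(f^k)=\lambda(f)^k$ this finishes the proof. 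No Diophantine work needs to be redone and no arithmetic progression analysis is required.

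Your proposal instead tries to estimate a single averaging summand $|\det(I-A_{0*}D_*^k)|$ along an arithmetic progression, and the crux --- as you acknowledge --- is to pick $A_0$ and $k_0$ simultaneously so that (i) $\det(I-A_{0*}D_*^{k_0})\ne0$, (ii) $A_0\in\Phi_\infty$ so that $A_{0*}$ commutes with $D_*^e$ and the product decomposition $\prod_i|1-\alpha_{i,A_0}\mu_i^k|$ makes sense, and (iii) $k_0$ lies in the chosen progression so the root-of-unity factors stay a fixed nonzero constant. The step ``after possibly replacing $k_0$ by a multiple may be arranged with $A_0\in\Phi_\infty$'' is not justified: changing $k_0$ changes which elements of $\Phi$ give a nonzero determinant (indeed the set of $k$ with $N(f^k)\ne0$ can have gaps, as in Example~\ref{exa:JM}(2)), and there is no reason the nonvanishing is ever witnessed by an element of $\Phi_\infty$. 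Without that, the whole progression analysis and the invocation of Baker-type estimates for the unit-modulus non-root-of-unity eigenvalues is dangling. The paper's device --- changing the homotopy lift of $f^k$ to $\beta(d,D)^k$ rather than fixing $(d,D)$ and selecting a summand --- sidesteps exactly this selection problem, because it works with a \emph{single} nonvanishing determinant and then hands the rest off to Theorem~\ref{thm:Radius2}. If you want to keep a direct estimate, you should at least switch to the lift $\beta(d,D)^{k_0}$ of $f^{k_0}$ and iterate it, which collapses your argument into the paper's.
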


\begin{proof}
Since $\lambda(f)\ge1$, by Corollary~\ref{2.9}, $N(f^k)\ne0$ for some $k\ge1$ and then by the averaging formula, there is $B\in\Phi$ such that $\det(I-B_*D_*^k)\ne0$. Choose $\beta\in\Pi$ of the form $\beta=(b,B)$. Then $\beta(d,D)^k$ is another homotopy lift of $f^k$. We have observed above that there are numbers $\nu_1,\cdots,\nu_m$ such that $\det(I-B_*D_*^k)=\prod_{i=1}^m(1-\nu_i)$ and $\{(\mu_i^k)^\ell\}=\{\nu_i^\ell\}$ for some $\ell>0$. Since $\det(I-B_*D_*^k)\ne0$, $B_*D_*^k$ has no eigenvalue 1. Hence by Theorem~\ref{thm:Radius2}, we have $\lambda(f^k)=\sp(\bigwedge B_*D_*^k)$. Recall that $N(f^k)=\sum_{i=1}^{r(f)}\rho_i\lambda_i^k$ and $\lambda(f)=\max\{|\lambda_i|\}$. Since $\lambda(f)\ge1$, it follows that $\lambda(f^k)=\lambda(f)^k$. Observe further that $\sp(\bigwedge B_*D_*^k)=\prod_{|\mu_i|\ge1}|\nu_i|=\prod_{|\mu_i|\ge1}|\mu_i^k|=\sp(\bigwedge D_*)^k$. Consequently, we obtain the required identity $\lambda(f)=\sp(\bigwedge D_*)$.
\end{proof}

\begin{Example}
Consider the $3$-dimensional orientable flat manifold with fundamental group $\frakG_2$ generated by $\{t_1,t_2,t_3,\alpha\}$ where
\begin{align*}
&t_1=\left(\left[\begin{matrix}1\\0\\0\end{matrix}\right],I\right),\
t_2=\left(\left[\begin{matrix}0\\1\\0\end{matrix}\right],I\right),\
t_3=\left(\left[\begin{matrix}0\\0\\1\end{matrix}\right],I\right),\\
&\alpha=(a,A)=\left(\left[\begin{matrix}\frac{1}{2}\\0\\0\end{matrix}\right],
\left[\begin{matrix}1&\hspace{8pt}0&\hspace{8pt}0\\0&-1&\hspace{8pt}0\\0&\hspace{8pt}0&-1
\end{matrix}\right]\right).
\end{align*}
Thus,
$$
\frakG_2=\left\langle t_1,t_2,t_3,\alpha\mid [t_i,t_j]=1, \alpha^2=t_1, \alpha t_2\alpha^{-1}=t_2^{-1}, \alpha t_3\alpha^{-1}=t_3^{-1}\right\rangle.
$$
Let $\varphi:\frakG_2\to\frakG_2$ be any homomorphism. Every element of $\frakG_2$ is of the form $\alpha^kt_2^mt_3^n$. Thus $\varphi$ has the form
$$
\varphi(t_2)=\alpha^{k_2}t_2^{m_2}t_3^{n_2},\
\varphi(t_3)=\alpha^{k_3}t_2^{m_3}t_3^{n_3},\
\varphi(\alpha)=\alpha^{k}t_2^{m}t_3^{n}.
$$
The relations $\alpha t_2\alpha^{-1}=t_2^{-1}$ and $\alpha t_3\alpha^{-1}=t_3^{-1}$ yield that $k_2=k_3=0$, $(-1)^km_i=-m_i$ and $(-1)^kn_i=-n_i$. Hence when $k$ is even, we have $m_i=n_i=0$. Further, $\varphi(t_1)=t_1^k$.

Now we shall determine an affine map $(d,D)$ satisfying $\varphi(\beta)(d,D)=(d,D)\beta$
for all $\beta\in\frakG_2$.
\medskip

\noindent
{\sc Case $k=2\ell$.}\newline
In this case, we have $\varphi(t_2)=\varphi(t_3)=1$ and $\varphi(\alpha)=\alpha^kt_2^mt_3^n=t_1^\ell t_2^mt_3^n$, and hence we need to determine $(d,D)$ satisfying
\begin{align*}
&(d,D)=(d,D)(e_2,I)\Rightarrow D(e_2)={\bf0},\\
&(d,D)=(d,D)(e_3,I)\Rightarrow D(e_3)={\bf0},\\
&(\ell e_1+m e_2+n e_3,I)(d,D)=(d,D)(a,A)\\
&\qquad\Rightarrow \ell e_1+me_2+ne_3+d=d+D(a),\ D=DA.
\end{align*}
Hence the second and the third columns of $D$ must be ${\bf0}$ and so $D=DA$ is automatically satisfied and the first column of $D$ is $2\left[\begin{matrix}\ell&m&n\end{matrix}\right]^t$. That is,
$$
(d,D)=\left(\left[\begin{matrix}*\\{*}\\{*}\end{matrix}\right],
\left[\begin{matrix}2\ell&0&0\\2m&0&0\\2n&0&0\end{matrix}\right]\right).
$$
The eigenvalues of $D$ are $0$ (multiple) and $k=2\ell$, and $N(f^k)=|(2\ell)^k-1|$ and
$$
N_f(z)=\begin{cases}
\frac{1}{1-z}&\text{when $\ell=0$;}\\
\frac{1-z}{1-2\ell z}&\text{when $\ell\ge1$;}\\
\frac{1+z}{1+2\ell z}&\text{when $\ell\le-1$.}
\end{cases}
$$
It follows that $\lambda(f)=\max\{1,2|\ell|\}=\sp(\bigwedge D_*)$.
Moreover, the sequence $\{N(f^k)/\lambda(f)^k\}$ is asymptotically the constant sequence $\{1\}$.
In fact, if for example $\ell\ge1$ then $N(f^k)=(-1)\cdot1^k+1\cdot(2\ell)^k$, $\lambda(f)=2\ell$
and $\Gamma(f^k)=1\cdot(2\ell)^k$, hence $\Gamma(f^k)/\lambda(f)^k\equiv1$.
We then have Case (2) of Theorem~\ref{thm:Asymptotic}.
\medskip

\noindent
{\sc Case $k=2\ell+1$.}\newline
In this case, we have $\varphi(t_2)=t_2^{m_2}t_3^{n_2},\ \varphi(t_3)=t_2^{m_3}t_3^{n_3}$ and $\varphi(\alpha)=\alpha^kt_2^mt_3^n=\alpha t_1^\ell t_2^mt_3^n$, and hence we need to determine $(d,D)$ satisfying
\begin{align*}
&\varphi(t_2)(d,D)=(d,D)t_2\Rightarrow D(e_2)=m_2e_2+n_2e_3,\\
&\varphi(t_3)(d,D)=(d,D)t_3\Rightarrow D(e_3)=m_3e_2+n_3e_3,\\
&\varphi(\alpha)(d,D)=(d,D)\alpha\\
&\qquad\Rightarrow a+A(\ell e_1+me_2+ne_3)+A(d)=d+D(a),\ AD=DA.
\end{align*}
These yield
$$
(d,D)=\left(\left[\begin{matrix}*\\-\frac{m}{2}\\-\frac{n}{2}\end{matrix}\right],
\left[\begin{matrix}2\ell+1&0&0\\0&m_2&m_3\\0&n_2&n_3\end{matrix}\right]\right).
$$
Now we consider some explicit examples of such $D$. First we take $D$ to be
$$
D=\left[\begin{matrix}-1&\hspace{8pt}0&0\\\hspace{8pt}0&\hspace{8pt}d&e\\\hspace{8pt}0&-e&d\end{matrix}\right].
$$
Then $D$ has eigenvalues $-1$ and $\mu=d\pm ei$.
Clearly $N(f^k)=0$ for all even integers $k>0$.
For odd $k$, $D^k$ and $AD^k$ are respectively of the form
$$
D^k=\left[\begin{matrix}-1&\hspace{8pt}0&0\\\hspace{8pt}0&\hspace{8pt}x&y\\\hspace{8pt}0&-y&x\end{matrix}\right]\
\text{ and }\
AD^k=\left[\begin{matrix}-1&\hspace{8pt}0&\hspace{8pt}0\\\hspace{8pt}0&-x&-y\\\hspace{8pt}0&\hspace{8pt}y&-x\end{matrix}\right].
$$
Then
\begin{align*}
N(f^k)&=\frac{1}{2}\left\{|\det(I-D^k)|+|\det(I-AD^k)|\right\}\\
&=\frac{1}{2}\left\{2\left((1-x)^2+y^2\right)+2\left((1+x)^2+y^2\right)\right\}
=2(1+x^2+y^2).
\end{align*}
Here $x^2+y^2=\mu^k\bar\mu^k=|\mu|^{2k}$.
Consequently, $N(f^k)=2(1+|\mu|^{2k})$ for odd $k$. This yields that
\begin{align*}
N_f(z)&=\exp\left(\sum_{k=1}^\infty\frac{2(1+|\mu|^{2(2k-1)})}{2k-1}z^{2k-1}\right)\\
&=\exp\left(\sum_{k=1}^\infty\frac{2}{2k-1}z^{2k-1}+\sum_{k=1}^\infty\frac{2}{2k-1}(|\mu|^2z)^{2k-1}\right)\\
&=\exp\left(\log\frac{1+z}{1-z}+\log\frac{1+|\mu|^2z}{1-|\mu|^2z}\right)\\
&=\frac{(1+z)(1+|\mu|^2z)}{(1-z)(1-|\mu|^2z)}.
\end{align*}
Moreover, $\lambda(f)=\max\{1,|\mu|^2\}=\sp(\bigwedge D_*)$.
We can see also that the sequence
$\Gamma(f^k)/\lambda(f)^k$ is $(-1)^{k+1}+1$ if $|\mu|\ne1$ and $2((-1)^{k+1}+1)$ if $|\mu|=1$,
hence the sequence $\Gamma(f^k)/\lambda(f)^k$ is $2$-periodic.
We thus have Case (2) of Theorem~\ref{thm:Asymptotic}.

Secondly, we take $D$ to be
$$
D=\left[\begin{matrix}-1&0&0\\\hspace{8pt}0&d&e\\\hspace{8pt}0&e&f\end{matrix}\right].
$$
Let $D$ have eigenvalues $-1$ and $\mu_1$ and $\mu_2$.
For odd $k$, $D^k$ and $AD^k$ are respectively of the form
$$
D^k=\left[\begin{matrix}-1&0&0\\\hspace{8pt}0&x&y\\\hspace{8pt}0&y&z\end{matrix}\right]\
\text{ and }\
AD^k=\left[\begin{matrix}-1&\hspace{8pt}0&\hspace{8pt}0\\\hspace{8pt}0&-x&-y\\\hspace{8pt}0&-y&-z\end{matrix}\right].
$$
Then
\begin{align*}
N(f^k)&=\frac{1}{2}\left\{|\det(I-D^k)|+|\det(I-AD^k)|\right\}\\
&=\frac{1}{2}\left\{2\left((1-x)(1-z)-y^2\right)+2\left((1+x)(1+z)-y^2\right)\right\}
=2(1+xz-y^2).
\end{align*}
Here $xz-y^2=\mu_1^k\mu_2^k$. Hence $N(f^k)=(1+(-1)^{k+1})(1+\mu_1^k\mu_2^k)$.
This yields that
\begin{align*}
N_f(z)&=\exp\left(\sum_{k=1}^\infty\frac{2(1+(\mu_1\mu_2)^{2k-1})}{2k-1}z^{2k-1}\right)\\
&=\exp\left(\sum_{k=1}^\infty\frac{2}{2k-1}z^{2k-1}+\sum_{k=1}^\infty\frac{2}{2k-1}(\mu_1\mu_2 z)^{2k-1}\right)\\
&=\exp\left(\log\frac{1+z}{1-z}+\log\frac{1+\mu_1\mu_2 z}{1-\mu_1\mu_2 z}\right)\\
&=\frac{(1+z)(1+\mu_1\mu_2 z)}{(1-z)(1-\mu_1\mu_2 z)}.
\end{align*}
Observe also that $\lambda(f)=\max\{1, |\mu_1\mu_2|\}=\sp(\bigwedge D_*)$.
Whether $\mu_i$ are real or complex, we can see that the sequence  $N(f^k)/\lambda(f)^k$
is asymptotically periodic, and so we have Case (2) of Theorem~\ref{thm:Asymptotic}.
\end{Example}

It is important to know not only the rate of growth of the sequence $\{N(f^k)\}$ but also the frequency with which the largest Nielsen number is encountered. The following theorem shows that this sequence grows relatively dense. The following are variations of Theorem~2.7, Proposition~2.8 and Corollary~2.9 of \cite{BaBo}.

\begin{Thm}\label{thm:BaBo2.7}
Let $f:M\to M$ be a map on an infra-solvmanifold of type $\R$. If $\lambda(f)\ge1$, then there exist $\gamma>0$ and a natural number $N$ such that for any $m> N$ there is an $\ell\in\{0,1,\cdots,n(f)-1\}$ such that $N(f^{m+\ell})/\lambda(f)^{m+\ell}>\gamma$.
\end{Thm}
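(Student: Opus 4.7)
The plan is to combine the decomposition $N(f^k)=\Gamma_k+\Omega_k$ of \eqref{eq:lambda} with a Vandermonde argument on a window of length $n(f)$.

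First, I would write $\sigma_j:=\lambda_j/\lambda(f)$ for the indices $j=1,\ldots,n(f)$ with $|\lambda_j|=\lambda(f)$, so that $|\sigma_j|=1$ and the $\sigma_j$ are pairwise distinct. For every $m\ge0$ and every $\ell\in\{0,1,\ldots,n(f)-1\}$, the definition of $\Gamma_k$ yields
\begin{align*}
\frac{\Gamma(f^{m+\ell})}{\lambda(f)^{m+\ell}}=\sum_{j=1}^{n(f)}\rho_j\sigma_j^m\sigma_j^\ell.
\end{align*}
Setting $\alpha_j(m):=\rho_j\sigma_j^m$ and viewing $\vec{y}(m):=\bigl(\Gamma(f^{m+\ell})/\lambda(f)^{m+\ell}\bigr)_{\ell=0}^{n(f)-1}$ and $\vec{\alpha}(m):=(\alpha_1(m),\ldots,\alpha_{n(f)}(m))^t$, this says $\vec{y}(m)=V\vec{\alpha}(m)$, where $V=(\sigma_j^\ell)$ is the $n(f)\times n(f)$ Vandermonde matrix built from the distinct values $\sigma_1,\ldots,\sigma_{n(f)}$.

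The second step is the key quantitative input: since the $\sigma_j$ are distinct, $V$ is invertible. Hence, with $c:=1/\|V^{-1}\|_{\infty\to\infty}>0$ a constant depending only on $f$, we have $\|V\vec{x}\|_\infty\ge c\,\|\vec{x}\|_\infty$ for every $\vec{x}\in\bbc^{n(f)}$. Because $|\alpha_j(m)|=|\rho_j|$ is independent of $m$, the quantity $\|\vec{\alpha}(m)\|_\infty=\max_j|\rho_j|$ is a positive constant, call it $\rho_*$. Consequently
\begin{align*}
\max_{0\le\ell<n(f)}\left|\frac{\Gamma(f^{m+\ell})}{\lambda(f)^{m+\ell}}\right|\;=\;\|\vec{y}(m)\|_\infty\;\ge\;c\,\rho_*\qquad\text{for every }m\ge0.
\end{align*}

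The third step absorbs the lower-order term. Recall from \eqref{eq:lambda} that $\Omega_k/\lambda(f)^k\to0$ as $k\to\infty$ (using $\lambda(f)\ge1$, the ratios $(\lambda_i/\lambda(f))^k$ for $|\lambda_i|<\lambda(f)$ tend to $0$). Choose $N$ so large that $|\Omega_k/\lambda(f)^k|<c\rho_*/2$ for all $k>N$. For any $m>N$ and any $\ell\in\{0,\ldots,n(f)-1\}$, the triangle inequality gives
\begin{align*}
\left|\frac{N(f^{m+\ell})}{\lambda(f)^{m+\ell}}\right|\ge\left|\frac{\Gamma(f^{m+\ell})}{\lambda(f)^{m+\ell}}\right|-\left|\frac{\Omega(f^{m+\ell})}{\lambda(f)^{m+\ell}}\right|,
\end{align*}
so picking $\ell$ that realises the maximum in the previous display yields $|N(f^{m+\ell})/\lambda(f)^{m+\ell}|\ge c\rho_*/2$. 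Finally, because $N(f^k)\ge0$ and $\lambda(f)>0$, the absolute value equals the value itself, so $N(f^{m+\ell})/\lambda(f)^{m+\ell}>\gamma$ with $\gamma:=c\rho_*/3$ (a strict inequality after shrinking slightly).

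The only real subtlety, and what I would flag as the main obstacle, is the uniform invertibility constant for $V$: one must recognise that the Vandermonde matrix built from the $n(f)$ unit-modulus roots on the outer circle is \emph{fixed} and invertible, so the bound $c=1/\|V^{-1}\|_{\infty\to\infty}$ is a positive constant independent of $m$. Everything else is bookkeeping, but it is this step that transforms the asymptotic non-vanishing into the relative density claim of the theorem.
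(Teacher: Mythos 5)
Your proof is correct and follows essentially the same strategy as the paper: decompose $N(f^k)=\Gamma_k+\Omega_k$, observe that the vector of $n(f)$ consecutive normalized $\Gamma$-values is the image under a fixed Vandermonde matrix $V$ of a vector whose norm is bounded below uniformly in $m$ (because each $|\rho_j\sigma_j^m|=|\rho_j|$ is constant), deduce a uniform lower bound on the windowed maximum from $\|V^{-1}\|$, and then absorb $\Omega_k/\lambda(f)^k\to0$ by the triangle inequality. The only cosmetic difference is that you work with the $\ell^\infty$ operator norm while the paper uses the Euclidean norm (and bounds $\|\vec\rho\|_2\ge\sqrt{n(f)}$ from $|\rho_j|\ge1$), but this does not change the argument; also your final $\gamma=c\rho_*/3$ is a slightly more conservative choice than necessary, since $c\rho_*/2$ already gives the strict inequality.
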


\begin{proof}
As in the proof of Theorem~\ref{thm:Asymptotic}, for any $k>0$, we can write $N(f^k)=\Gamma_k+\Omega_k$ so that
$\Gamma_k/\lambda(f)^k=\sum_{j=1}^{n(f)}\rho_je^{2i\pi(k\theta_j)}$.
Consider the following $n(f)$ consecutive identities
$$
\frac{\Gamma_{k+\ell}}{\lambda(f)^{k+\ell}}=\sum_{j=1}^{n(f)}\left(\rho_je^{2i\pi (k\theta_j)}\right)e^{2i\pi (\ell\theta_j)}, \quad\text{$\ell=0,\cdots,n(f)-1$.}
$$
Let $W=W(\theta_1,\cdots,\theta_{n(f)})$ be the Vandermonde operator on $\bbc^{n(f)}$
$$
W(\theta_1,\cdots,\theta_{n(f)})
=\left[\begin{matrix}
1&1&\cdots&1\\
e^{2i\pi\theta_1}&e^{2i\pi\theta_2}&\cdots&e^{2i\pi\theta_{n(f)}}\\
e^{2i\pi(2\theta_1)}&e^{2i\pi(2\theta_2)}&\cdots&e^{2i\pi(2\theta_{n(f)})}\\
\vdots&\vdots&&\vdots\\
e^{2i\pi(n(f)-1)\theta_1}&e^{2i\pi(n(f)-1)\theta_2}&\cdots&e^{2i\pi(n(f)-1)\theta_{n(f)}}
\end{matrix}\right]
$$
and let {$2\gamma=\bb W^{-1}\bb^{-1}$.} Then the vector $\vec\rho=\left(\rho_1e^{2i\pi(k\theta_1)},\cdots,\rho_{n(f)}e^{2i\pi(k\theta_{n(f)})}\right)$ satisfies $\bb W\vec\rho\bb\ge \bb W^{-1}\bb^{-1}\bb\vec\rho\bb=2\gamma\bb\vec\rho\bb\ge2\gamma\sqrt{n(f)}$. Thus there is at least one of the coordinates of the vector $W\vec\rho$ whose modulus is $\ge2\gamma$. That is, there is an $\ell\in\{0,1,\cdots,n(f)-1\}$ such that ${|\Gamma_{k+\ell}|}/{\lambda(f)^{k+\ell}}\ge2\gamma$.
On the other hand, since $\Omega_k/\lambda(f)^k\to 0$,
we can choose $N$ so large that $m>N\Rightarrow |\Omega_m|/\lambda(f)^m<\gamma$.

In all, whenever $m>N$ there is an $\ell\in\{0,1,\cdots,n(f)-1\}$ such that
\begin{align*}
\frac{N(f^{m+\ell})}{\lambda(f)^{m+\ell}}&\ge\frac{|\Gamma_{m+\ell}|}{\lambda(f)^{m+\ell}}
-\frac{|\Omega_{m+\ell}|}{\lambda(f)^{m+\ell}}
>2\gamma-\gamma=\gamma.
\end{align*}
This finishes the proof.
\end{proof}

\begin{Cor}\label{3.2.47}
If $\lambda(f)\ge1$, then we have
$$
\limsup_{k\to\infty}\frac{N(f^k)}{\lambda(f)^k}=\limsup_{k\to\infty}\frac{\Gamma(f^k)}{\lambda(f)^k}>0,\quad
\lim_{k\to\infty}\frac{\Omega(f^k)}{\lambda(f)^k}=0.
$$
\end{Cor}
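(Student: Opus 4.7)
The plan is to derive the corollary as an immediate consequence of Theorem~\ref{thm:BaBo2.7} together with the decomposition $N(f^k)=\Gamma_k+\Omega_k$ set up in \eqref{eq:lambda}.

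First I would handle the second limit. Since $|\lambda_i|<\lambda(f)$ for each index contributing to $\Omega_k$, the ratio $\Omega_k/\lambda(f)^k$ is a finite sum of terms $\rho_i(\lambda_i/\lambda(f))^k$ each with $|\lambda_i/\lambda(f)|<1$, so it tends to $0$. This is already observed in the paragraph following \eqref{eq:lambda} and requires no further argument.

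Next I would prove the first equality. From $N(f^k)/\lambda(f)^k=\Gamma_k/\lambda(f)^k+\Omega_k/\lambda(f)^k$ and the fact that adding a null sequence does not change the $\limsup$, we get
\[
\limsup_{k\to\infty}\frac{N(f^k)}{\lambda(f)^k}=\limsup_{k\to\infty}\frac{\Gamma(f^k)}{\lambda(f)^k}.
\]

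Finally, to see that this common value is strictly positive, I would invoke Theorem~\ref{thm:BaBo2.7}: there exist $\gamma>0$ and $N\in\bbn$ such that for every $m>N$ there is some $\ell\in\{0,1,\dots,n(f)-1\}$ with $N(f^{m+\ell})/\lambda(f)^{m+\ell}>\gamma$. Consequently one can extract a subsequence $k_j\to\infty$ along which $N(f^{k_j})/\lambda(f)^{k_j}>\gamma$, which forces $\limsup_{k\to\infty}N(f^k)/\lambda(f)^k\ge\gamma>0$. There is no real obstacle here; the corollary is essentially a restatement of Theorem~\ref{thm:BaBo2.7} combined with the trivial decay of $\Omega_k/\lambda(f)^k$.
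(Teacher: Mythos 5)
Your proof is correct and follows exactly the route the paper intends: the paper states this corollary with no written proof, precisely because it is immediate from Theorem~\ref{thm:BaBo2.7} together with the already-observed decay $\Omega_k/\lambda(f)^k\to 0$ following \eqref{eq:lambda}. Nothing more needs to be said.
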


\begin{Prop}%[{\cite[Proposition~2.8]{BaBo}}]
\label{BaBo2.8}
Let $f:M\to M$ be a map on an infra-solvmanifold of type $\R$ such that $\lambda(f)>1$. Then for any $\epsilon>0$, there exists $N$ such that if $N(f^m)/\lambda(f)^m\ge\epsilon$ for $m>N$, then the Dold multiplicity $I_m(f)$ satisfies
$$
|I_m(f)|\ge\frac{\epsilon}{2}\lambda(f)^m.
$$
\end{Prop}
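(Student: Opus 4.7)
The plan is to write $I_m(f)$ as the ``leading'' term $N(f^m)$ plus a tail supported on proper divisors of $m$, and to bound the tail using the expression $N(f^k)=\sum_{i=1}^{r(f)}\rho_i\lambda_i^k$ together with the fact that every proper divisor $d$ of $m$ satisfies $d\le m/2$.

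First I would recall from \eqref{eq:dold} that
$$
I_m(f)=N(f^m)+\sum_{\substack{d\mid m\\ d<m}}\mu\!\left(\frac{m}{d}\right)N(f^d),
$$
and set $C=\sum_{i=1}^{r(f)}|\rho_i|$. Since every $|\lambda_i|\le\lambda(f)$, the identity \eqref{N_k} yields the uniform bound $|N(f^d)|\le C\lambda(f)^d$ for all $d\ge1$. Every proper divisor $d$ of $m$ satisfies $d\le m/2$, so
$$
\left|\sum_{\substack{d\mid m\\ d<m}}\mu\!\left(\frac{m}{d}\right)N(f^d)\right|
\le C\sum_{\substack{d\mid m\\ d<m}}\lambda(f)^d\le C\,\tau(m)\,\lambda(f)^{m/2},
$$
where $\tau(m)$ denotes the number of divisors of $m$.

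Next, because $\lambda(f)>1$ and $\tau(m)=o(m^\delta)$ for every $\delta>0$, the ratio $\tau(m)\lambda(f)^{-m/2}$ tends to $0$ as $m\to\infty$. Hence there exists $N=N(\epsilon)$ such that for every $m>N$,
$$
C\,\tau(m)\,\lambda(f)^{m/2}\le\frac{\epsilon}{2}\,\lambda(f)^m.
$$
Assuming the hypothesis $N(f^m)/\lambda(f)^m\ge\epsilon$ for such $m$, the reverse triangle inequality gives
$$
|I_m(f)|\ge N(f^m)-C\,\tau(m)\,\lambda(f)^{m/2}\ge \epsilon\lambda(f)^m-\frac{\epsilon}{2}\lambda(f)^m=\frac{\epsilon}{2}\lambda(f)^m,
$$
which is the desired conclusion.

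The only nontrivial point is the tail estimate: one must verify that the contribution of the proper divisors is genuinely of lower exponential order than $\lambda(f)^m$. This is the reason we need $\lambda(f)>1$ strictly (rather than $\lambda(f)\ge1$): only then does $\lambda(f)^{m/2}$ grow fast enough to absorb the divisor-count factor $\tau(m)$. All other steps are routine manipulations based on the already-established rational form \eqref{eq:rat} of $N_f(z)$ and the M\"obius-inversion identity \eqref{eq:dold}.
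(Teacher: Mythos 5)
Your proof is correct and follows essentially the same route as the paper: isolate the $d=m$ term of the M\"obius sum, bound all remaining terms by $C\lambda(f)^d$ with $d\le m/2$, use the divisor-count $\tau(m)$ to control the number of terms, and invoke $\lambda(f)>1$ to absorb $\tau(m)$ into $\lambda(f)^{m/2}$. The only cosmetic differences are your choice of $C=\sum_i|\rho_i|$ (the paper takes any $C\ge 2M(f)$, which is the same thing up to a harmless factor) and your appeal to $\tau(m)=o(m^\delta)$ where the paper cites the explicit bound $\tau(m)\le 2\sqrt{m}$.
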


\begin{proof}
From the definition of Dold multiplicity $I_k(f)$, we have
\begin{align*}
|I_k(f)|=\Big|\sum_{d\mid k}\mu\!\left(\frac{k}{d}\right)N(f^d)\Big|
\ge N(f^k)-\Big|\sum_{d\mid k, d\ne k}\mu\!\left(\frac{k}{d}\right)N(f^d)\Big|.
\end{align*}
Let $C$ be any number such that $2M(f)\le C$. Then for any $d>0$
$$
N(f^d)\le\sum_{i=1}^{r(f)}|\rho_i|\lambda(f)^d\le 2M(f)\lambda(f)^d\le C\lambda(f)^d.
$$
Thus we have
\begin{align*}
|I_k(f)|&\ge N(f^k)-C\sum_{d\mid k, d\ne k}\lambda(f)^d
\ge N(f^k)-C\tau(k)\lambda(f)^{k/2}\\
&=N(f^k)-C\frac{\tau(k)}{\lambda(f)^{k/2}}\lambda(f)^k
\end{align*}
where $\tau(k)$ is the number of divisors of $k$.
Since $\tau(k)\le2\sqrt{k}$, see \cite[Ex~3.2.17]{JM}, and since $\lambda(f)>1$, we have $\lim_{k\to\infty}\tau(k)/\lambda(f)^{k/2}=0$, and so there exists an integer $N$ such that $C\tau(k)/\lambda(f)^{k/2}<\epsilon/2$ for all $k>N$. Let $m>N$ such that $N(f^m)/\lambda(f)^m\ge\epsilon$. The above inequality induces the required inequality
\begin{align*}
|I_m(f)|&\ge \left(\frac{N(f^m)}{\lambda(f)^m}-C\frac{\tau(m)}{\lambda(f)^{m/2}}\right)\lambda(f)^m
\ge \frac{\epsilon}{2}\lambda(f)^m.\qedhere
\end{align*}
\end{proof}

Theorem~\ref{thm:BaBo2.7} and Proposition~\ref{BaBo2.8} imply immediately the following:
\begin{Cor}\label{BaBo2.9}
Let $f:M\to M$ be a map on an infra-solvmanifold of type $\R$ such that $\lambda(f)>1$.  Then there exist $\gamma>0$ and a natural number $N$ such that if $m\ge N$ then there exists $\ell$ with $0\le\ell\le n(f)-1$ such that $|I_{m+\ell}(f)|/\lambda(f)^{m+\ell}\ge \gamma/2$. In particular $I_{m+\ell}(f)\ne0$ and so $A_{m+\ell}(f)\ne0$.
\end{Cor}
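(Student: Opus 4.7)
The plan is to combine Theorem~\ref{thm:BaBo2.7} and Proposition~\ref{BaBo2.8} as indicated by the preceding sentence, essentially doing the bookkeeping required to match up the constants from the two results.

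First, I would apply Theorem~\ref{thm:BaBo2.7} to produce a constant $\gamma > 0$ and a natural number $N_1$ such that for every $m > N_1$ there exists $\ell \in \{0,1,\ldots,n(f)-1\}$ with
$$
\frac{N(f^{m+\ell})}{\lambda(f)^{m+\ell}} > \gamma.
$$
Then I would feed this specific $\gamma$ into Proposition~\ref{BaBo2.8} playing the role of $\epsilon$; this yields a second threshold $N_2$ such that whenever $k > N_2$ and $N(f^k)/\lambda(f)^k \ge \gamma$, we have $|I_k(f)| \ge (\gamma/2)\lambda(f)^k$.

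Next, I would set $N := \max\{N_1,N_2\}+1$. For any $m \ge N$, Theorem~\ref{thm:BaBo2.7} supplies an $\ell \in \{0,1,\ldots,n(f)-1\}$ such that $N(f^{m+\ell})/\lambda(f)^{m+\ell} > \gamma$, and since $m+\ell \ge m > N_2$, Proposition~\ref{BaBo2.8} applies to the index $k = m+\ell$ to give $|I_{m+\ell}(f)|/\lambda(f)^{m+\ell} \ge \gamma/2$, which is the desired inequality. Finally, since $\lambda(f) > 1$ we have $\lambda(f)^{m+\ell} > 0$, so $I_{m+\ell}(f) \ne 0$; by the identity $I_k(f) = k\, A_k(f)$ from \eqref{eq:dold} this also gives $A_{m+\ell}(f) \ne 0$.

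The only delicate point is making sure that the offset $\ell$ obtained from Theorem~\ref{thm:BaBo2.7} does not push the index $m+\ell$ below the threshold required by Proposition~\ref{BaBo2.8}; this is avoided simply by choosing $N$ to dominate both $N_1$ and $N_2$ simultaneously and by using $\ell \ge 0$. Since both ingredients have already been established, the only work here is this constant-chasing, and there is no genuine obstacle.
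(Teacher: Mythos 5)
Your proof is correct and is exactly the argument the paper intends: the paper states that Theorem~\ref{thm:BaBo2.7} and Proposition~\ref{BaBo2.8} imply the corollary immediately, and your constant-chasing (feeding the $\gamma$ from the theorem in as the $\epsilon$ of the proposition and taking $N$ to dominate both thresholds) is precisely that bookkeeping. The final deduction $I_{m+\ell}(f)\ne 0\Rightarrow A_{m+\ell}(f)\ne 0$ via $I_k(f)=kA_k(f)$ is also right.
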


\section{Essential periodic orbits}

In this section, we shall give an estimate from below the number of \emph{essential periodic orbits} of maps on infra-solvmanifolds of type $\R$.

First of all, we recall the following:
\begin{Thm}[{\cite{SS}}]
If $f:M\to M$ is a $C^1$-map on a smooth compact manifold $M$ and $\{L(f^k)\}$ is unbounded, then the set of periodic points of $f$, $\bigcup_k\fix(f^k)$, is infinite.
\end{Thm}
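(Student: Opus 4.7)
The plan is to prove the contrapositive: if $\bigcup_k \fix(f^k)$ is finite, then $\{L(f^k)\}$ is bounded. The key ingredient will be the classical Shub--Sullivan index bound for isolated periodic points of $C^1$-maps.

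First I would recall the Shub--Sullivan theorem itself: if $f$ is a $C^1$-map and $p$ is an isolated fixed point of every iterate $f^k$ of $f$ (equivalently, an isolated periodic point at which $f^k$ has $p$ as an isolated fixed point for all $k$), then the sequence of local fixed point indices $\{\ind(f^k,p)\}_{k\ge 1}$ is bounded. This is precisely the content of the paper \cite{SS} and is the main tool.

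Next I would set up the Lefschetz fixed point formula in its index form: for each $k$ such that $\fix(f^k)$ is finite,
\begin{equation*}
L(f^k) \;=\; \sum_{x \in \fix(f^k)} \ind(f^k,x).
\end{equation*}
Assume $P := \bigcup_k \fix(f^k) = \{p_1,\dots,p_N\}$ is finite. Each $p_i$ is a periodic point, so there is a minimal period $k_i$; set $K := \lcm(k_1,\dots,k_N)$. For every $k\ge 1$ the set $\fix(f^k)$ is contained in $P$, hence is finite, and moreover each $p_i$ is automatically an isolated point of $\fix(f^k)$ because $P$ itself is finite. In particular, the hypothesis of the Shub--Sullivan theorem applies to each $p_i$, and there is a uniform bound $C_i$ with $|\ind(f^k,p_i)|\le C_i$ for all $k\ge 1$.

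Combining these two ingredients, I get
\begin{equation*}
|L(f^k)| \;\le\; \sum_{i=1}^{N} |\ind(f^k,p_i)| \;\le\; \sum_{i=1}^{N} C_i,
\end{equation*}
which is independent of $k$, so $\{L(f^k)\}$ is bounded. This contradicts the hypothesis, proving the theorem. The main conceptual step, and the only non-elementary one, is the invocation of Shub--Sullivan to ensure that each local index sequence $\{\ind(f^k,p_i)\}_k$ is bounded; the rest is a clean contrapositive argument using the Lefschetz--Hopf index formula, which is available here because $M$ is a smooth compact manifold and all fixed point sets are finite under our assumption.
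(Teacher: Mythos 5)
The paper does not prove this statement; it quotes it directly from Shub and Sullivan \cite{SS}, so there is no internal proof to compare against. Your reconstruction is, in substance, the original Shub--Sullivan argument: combine boundedness of the local index sequence at an isolated fixed point of all iterates with the Lefschetz--Hopf formula and sum over the finitely many periodic points.

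One technical point in your write-up deserves tightening. You state the Shub--Sullivan index bound for a point $p$ that is a fixed point of $f$ (and hence of every $f^k$), but then apply it verbatim to each $p_i\in P$, even though $p_i$ generally has minimal period $k_i>1$ and is therefore not a fixed point of $f$ itself. The correct move is to apply the index bound to the $C^1$-map $g_i:=f^{k_i}$, for which $p_i$ is a fixed point isolated in $\fix(g_i^m)$ for every $m$ (since $P$ is finite). This yields a constant $C_i$ with $|\ind(f^{mk_i},p_i)|=|\ind(g_i^m,p_i)|\le C_i$ for all $m\ge1$. For $k$ not a multiple of $k_i$ the point $p_i$ simply does not lie in $\fix(f^k)$ and contributes nothing to $L(f^k)$. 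With this adjustment your estimate
$$|L(f^k)|\le\sum_{i:\,k_i\mid k}|\ind(f^k,p_i)|\le\sum_{i=1}^N C_i$$
holds for every $k$, and the contrapositive goes through exactly as you intended. So the argument is correct after this small repair, and it is the same route Shub and Sullivan take in \cite{SS}.
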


This theorem is not true for continuous maps. Consider the one-point compactification of the map of the complex plane $f(z)=2z^2/\bb z\bb$. This is a continuous degree two map of $S^2$ with only two periodic points. But $L(f^k)=2^{k+1}$.

However, when $M$ is an infra-solvmanifold of type $\R$, the theorem is true for all {continuous} maps $f$ on $M$. In fact, using the averaging formula (\cite[Theorem~4.3]{LL-Nagoya}, \cite{HL-Nagoya}), we obtain
\begin{align*}
|L(f^k)|\le\frac{1}{|\Phi|}\sum_{A\in\Phi}|\det(I-A_*D_*^k)|=N(f^k).
\end{align*}
If $L(f^k)$ is unbounded, then so is $N(f^k)$ and hence the number of essential fixed point classes of all $f^k$ is infinite. In fact, the inequality $|L(f)|\le N(f)$ for any map $f$ on an infra-solvmanifold was proved in \cite{Wong-crelle}.

\begin{Cor}
Let $f$ be a map on an infra-solvmanifold of type $\R$.
Suppose $\{N(f^k)\}$ is unbounded.
If every periodic point of $f$ is isolated, then the set of minimal periods of $f$ is infinite.
\end{Cor}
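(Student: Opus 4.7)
The plan is to combine the asymptotic lower bound for the Dold multiplicities $I_k(f)$ from Corollary~\ref{BaBo2.9} with the counting interpretation of $I_k(f)$ that becomes available once every periodic point of $f$ is assumed to be isolated.

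First I would observe that the unboundedness of $\{N(f^k)\}$ forces $\lambda(f)>1$, by Theorem~\ref{thm:Radius1}(3). This puts us in the hypothesis of Corollary~\ref{BaBo2.9}, which supplies constants $\gamma>0$ and $N\in\bbn$ such that for every $m\ge N$ there exists $\ell\in\{0,1,\ldots,n(f)-1\}$ with $I_{m+\ell}(f)\ne0$. Running $m$ through the arithmetic progression $N,\,N+n(f),\,N+2n(f),\ldots$ makes the corresponding blocks $[m,\,m+n(f)-1]$ pairwise disjoint, so from each block I extract one index with $I_k(f)\ne0$, producing an infinite set $\mathcal{K}\subset\bbn$ on which $I_k(f)\ne0$.

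Next I would invoke the interpretation recalled just after Theorem~\ref{thm:Dold}: for a map $f$ on an infra-solvmanifold of type $\R$, the Dold multiplicity
\[
I_k(f)=\sum_{d\mid k}\mu\!\left(\frac{k}{d}\right)N(f^d)
\]
equals the number of isolated periodic points of $f$ whose least period is exactly $k$ (this is the content of \cite[Theorem~11.4]{FL}). Under the hypothesis that every periodic point of $f$ is isolated, this count is precisely $\#P_k(f)$. Hence $I_k(f)\ne0$ implies $P_k(f)\ne\emptyset$, i.e., $k\in\Per(f)$. Combined with the previous step, $\mathcal{K}\subset\Per(f)$, so $\Per(f)$ is infinite.

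The main obstacle is the combinatorial gap between ``one nonzero $I_k$ per block of length $n(f)$'' (which is all Corollary~\ref{BaBo2.9} directly guarantees) and ``infinitely many nonzero $I_k$''; the disjoint-blocks trick above is what bridges it. A secondary subtlety is that the $k$'s produced may be composite, so one must be careful that $I_k(f)\ne0$ really witnesses a point of minimal period exactly $k$, rather than merely a periodic point of some proper divisor of $k$. This is precisely where the isolation hypothesis together with the counting statement \cite[Theorem~11.4]{FL} is essential: without isolation, $I_k(f)$ need not have such a transparent geometric meaning and the deduction would break down.
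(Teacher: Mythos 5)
Your argument is a genuinely different (and considerably heavier) route than the paper's, and it contains a gap at the key step.

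The paper's own proof is a short pigeonhole argument that uses none of the machinery you invoke: since periodic points are isolated and $M$ is compact, each $\fix(f^m)$ is finite; if $f$ had only finitely many minimal periods, then the total set of periodic points of $f$ would be finite, so $N(f^k)\le\#\fix(f^k)$ would be uniformly bounded, contradicting the assumed unboundedness of $\{N(f^k)\}$. No appeal to $\lambda(f)$, to Corollary~\ref{BaBo2.9}, or to the Dold multiplicities is required.

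The gap in your proof is the identification $I_k(f)=\#P_k(f)$ under the isolation hypothesis. The Dold multiplicity $I_k(f)$ is a homotopy invariant of $f$ (it is an integer combination of the Nielsen numbers), whereas $\#P_k(f)$ is not: one can perturb a map with isolated periodic points within its homotopy class to create additional cancelling pairs of periodic points of minimal period $k$, keeping all periodic points isolated but changing $\#P_k(f)$ while leaving $I_k(f)$ untouched. Hence the equality you assert cannot hold for an arbitrary $f$ with isolated periodic points. The informal remark after Theorem~\ref{thm:Dold}, and the precise statement in Proposition~\ref{epp} that $I_k(f)=\#\EP_k(f)$, both refer to the \emph{essential} periodic points of a specific affine representative $\bar f$ of the homotopy class (where each essential fixed point class is a singleton), not to an arbitrary $f$ with isolated periodic points. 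To conclude $k\in\Per(f)$ from $I_k(f)\ne0$ one must still argue via essential reducibility (Lemma~\ref{er}) and the homotopy correspondence of fixed point classes that every point in the relevant essential class of $f^k$ has minimal period exactly $k$; you skip this step entirely. Even with that repaired, the resulting argument would be far more elaborate than the paper's two-line contradiction, so it would not be the preferable route here.
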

\begin{proof}
By assumption, each $\fix(f^m)$ consists of isolated points
and so $\fix(f^m)$ and hence $P_m(f)$ are finite.
If, in addition, $f$ has finitely many minimal periods then $f$ must have finitely many periodic points.
This implies that $\{N(f^k)\}$ is bounded, a contradiction.
\end{proof}

Recall that any map $f$ on an infra-solvmanifold of type $\R$ is homotopic to a map $\bar{f}$ induced by an affine map $(d,D)$. By \cite[Proposition~9.3]{FL}, every essential fixed point class of $\bar{f}$ consists of a single element $x$ with index {$\sgn\det(I-df_x)$}. Hence $N(f)=N(\bar{f})$ is the number of essential fixed point classes of $\bar{f}$. It is a classical fact that a homotopy between $f$ and $\bar{f}$ induces a one-one correspondence between the fixed point classes of $f$ and those of $\bar{f}$, which is index preserving. Consequently, we obtain
$$
|L(f^k)|\le N(f^k)\le \#\fix(f^k).
$$
This induces the following conjectural inequality (see \cite{Shub,SS}) for infra-solvmanifolds of type $\R$:
$$
\limsup_{k\to\infty}\frac{1}{k}\log|L(f^k)|\le \limsup_{k\to\infty}\frac{1}{k}\log\#\fix(f^k).
$$
\bigskip

We denote by $\calO(f,k)$ the set of all essential periodic orbits of $f$ with length $\le k$.
Thus
$$
\calO(f,k)=\{\langle\bbf\rangle\mid \bbf \text{ is an essential fixed point class of $f^m$ with $m\le k$}\}.
$$

\begin{Thm}\label{BaBo4.2}
Let $f$ be a map on an infra-solvmanifold of type $\R$. Suppose that the sequence $N(f^k)$ is unbounded. Then there exists a natural number $N_0$ such that
$$
k\ge N_0\Longrightarrow \#\calO(f,k)\ge \frac{k-N_0}{r(f)}.
$$
\end{Thm}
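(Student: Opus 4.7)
The plan is to combine the ``density of nonzero algebraic multiplicities'' supplied by Corollary~\ref{BaBo2.9} with the identification of $A_j(f)\ne0$ as certifying an essential periodic orbit of minimal length \emph{exactly} $j$, and then execute a block-counting argument.

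First, since $\{N(f^k)\}$ is unbounded, Theorem~\ref{thm:Radius1} gives $\lambda(f)>1$, which activates Corollary~\ref{BaBo2.9}: there exist $\gamma>0$ and $N\in\bbn$ so that for every $m\ge N$ the window $[m,\,m+n(f)-1]$ contains some index $j$ with $A_j(f)\ne0$. I would then slice $[N,\infty)$ into pairwise disjoint blocks $B_q=[N+(q-1)n(f),\,N+qn(f)-1]$ of length $n(f)\le r(f)$; each $B_q$ must contain at least one $j_q$ with $A_{j_q}(f)\ne0$.

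The second ingredient is Proposition~\ref{epp} together with the description (from the proof of Theorem~\ref{thm:Dold}, i.e.\ \cite[Theorem~11.4]{FL}) of $I_j(f)=jA_j(f)\ge0$ as the number of isolated periodic points of the affine representative $\bar f$ with minimal period exactly $j$. This upgrades each $j_q$ with $A_{j_q}(f)\ne0$ to at least one essential periodic orbit of $f$ of minimal length precisely $j_q$. Because the $j_q$ lie in disjoint blocks they are pairwise distinct, hence so are the associated orbits, and all of them lie in $\calO(f,k)$ provided $j_q\le k$. A routine count then closes the argument: taking $N_0:=N+r(f)$, for $k\ge N_0$ the number of complete blocks $B_q$ contained in $[N,k]$ is at least
$$
\bigg\lfloor\frac{k-N+1}{n(f)}\bigg\rfloor\ \ge\ \frac{k-N+1}{r(f)}-1\ \ge\ \frac{k-N_0}{r(f)},
$$
so $\#\calO(f,k)\ge (k-N_0)/r(f)$.

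The main obstacle is the \emph{distinctness} of the orbits produced by different blocks. The whole argument collapses unless $A_{j_q}(f)\ne0$ genuinely certifies an essential orbit of minimal length exactly $j_q$ rather than merely of length dividing $j_q$, for otherwise a single short orbit could be charged to arbitrarily many blocks. This is precisely what the non-negativity $I_j(f)\ge0$ of \cite[Theorem~11.4]{FL}---equivalently, the exact realizability of the sequence $\{N(f^k)\}$ via \cite[Lemma~2.1]{PW}---is designed to guarantee; once this is in hand, the rest is bookkeeping with Corollary~\ref{BaBo2.9}.
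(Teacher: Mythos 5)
Your proposal is correct and runs along essentially the same lines as the paper's own proof: both use Corollary~\ref{BaBo2.9} to lower-bound the density of indices $m$ with $A_m(f)\neq0$, identify each such $m$ with an essential periodic orbit of minimal length exactly $m$, and then count. The only substantive presentational difference is that you appeal to Proposition~\ref{epp} (which the paper states and proves \emph{after} this theorem, via M\"{o}bius inversion of $N(f^k)=\sum_{d\mid k}\#\EP_d(f)$) to get ``$A_m(f)\neq0$ certifies an orbit of minimal period $m$,'' whereas the paper proves this inline by introducing the local multiplicities $A_m(f,\langle x\rangle)$ and computing $\calA(f,\langle x\rangle)=\{p\}$; the two are logically equivalent and your route is, if anything, a bit more streamlined, and your bookkeeping with $N_0:=N+r(f)$ is carefully correct.
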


\begin{proof}
As mentioned earlier, we may assume that every essential fixed point class $\bbf$ of any $f^k$ consists of a single element $\bbf=\{x\}$. Denote by $\fix_e(f^k)$ the set of essential fixed point (class) of $f^k$. Thus $N(f^k)=\#\fix_e(f^k)$. Recalling also that $f$ acts on the set $\fix_e(f^k)$ from the proof of \cite[Theorem~11.4]{FL}, we have
$$
\calO(f,k)=\{\langle x\rangle\mid x \text{ is a essential periodic point of $f$ with length $\le k$}\}.
$$

Observe further that if $x$ is an essential periodic point of $f$ with minimal period $p$, then $x\in\fix_e(f^{q})$ if and only if $p\mid q$. The length of the orbit $\langle x\rangle$ of $x$ is $p$, and
\begin{align*}
&\fix_e(f^k)=\bigcup_{d\mid k}\ \fix_e(f^d),\\
&\fix_e(f^d)\bigcap\fix_e(f^{d'})=\fix_e(f^{\gcd(d,d')}).
\end{align*}
Recalling that
\begin{align*}
A_m(f)&=\frac{1}{m}\sum_{k\mid m}\mu\!\left(\frac{m}{k}\right)N(f^k)
=\frac{1}{m}\sum_{k\mid m}\mu\!\left(\frac{m}{k}\right)\#\fix_e(f^k),
\end{align*}
we define $A_m(f,\langle x\rangle)$ for any $x\in\bigcup_i\fix_e(f^i)$ to be
\begin{align*}
A_m(f,\langle x\rangle)&=\frac{1}{m}\sum_{k\mid m}\mu\!\left(\frac{m}{k}\right)\#\!\left(\langle x\rangle\cap\fix_e(f^k)\right).
\end{align*}
Then we have
\begin{align*}
A_m(f)=\sum_{\substack{\langle x\rangle\\ x\in\fix_e(f^m)}} A_m(f,\langle x \rangle).
\end{align*}

We begin with new notation. For a given integer $k>0$ and $x\in\bigcup_m\fix_e(f^m)$, let
\begin{align*}
&\calA(f,k)=\left\{m\le k\mid A_m(f)\ne0\right\},\\
&\calA(f,\langle x\rangle)=\left\{m\mid A_m(f,\langle x\rangle)\ne0\right\}.
\end{align*}
Remark that if $A_m(f)\ne0$ then there exists an essential periodic point $x$ of $f$ with period $m$ such that $A_m(f,\langle x\rangle)\ne0$. Consequently, we have
$$
\calA(f,k)\subset \bigcup_{\langle x\rangle\in\calO(f,k)} \calA(f,\langle x\rangle)
$$

Since $N(f^k)$ is unbounded, we have that $\lambda(f)>1$, see the observation just above Theorem~\ref{thm:Radius1}. {By Corollary~\ref{BaBo2.9}, there is $N_0$ such that if $n\ge N_0$ then there is $i$ with $n\le i\le n+n(f)-1$} such that $A_i(f)\ne0$. This leads to the estimate
$$
\#\calA(f,k)\ge\frac{k-N_0}{n(f)}\quad \forall k\ge N_0.
$$

Assume that $x$ has minimal period $p$. Then we have
\begin{align*}
A_m(f,\langle x\rangle)
&=\frac{1}{m}\sum_{p\mid n\mid m}\mu\!\left(\frac{m}{n}\right)\#\langle x\rangle
=\frac{p}{m}\sum_{p\mid n\mid m}\mu\!\left(\frac{m}{n}\right).
\end{align*}
Thus if $m$ is not a multiple of $p$ then by definition $A_m(f,\langle x\rangle)=0$. It is clear that $A_p(f,\langle x\rangle)=\mu(1)=1$, i.e., $p\in\calA(f,\langle x\rangle)$. Because $p\mid n\mid rp\Leftrightarrow n=r'p$ with $r'\mid r$, we have $A_{rp}(f,\langle x\rangle)=1/r \sum_{p\mid n\mid rp}\mu(rp/n)=1/r\sum_{r'\mid r}\mu(r/r')$ which is $0$ when and only when $r>1$. Consequently, $\calA(f,\langle x\rangle)=\{p\}$.

In all, we obtain the required inequality
\begin{align*}
\frac{k-N_0}{r(f)}&\le \#\calA(f,k)\le\#\calO(f,k).\qedhere
\end{align*}
\end{proof}

We consider the set of periodic points of $f$ with minimal period $k$
$$
P_k(f)=\fix(f^k)-\bigcup_{d\mid k, d<k}\fix(f^d).
$$
It is clear that $\fix(f)\subset \fix(f^2)$, i.e., any fixed point class of $f$ is naturally contained in a unique fixed point class of $f^2$. It is also known that $\fix_e(f)\subset \fix_e(f^2)$. We define
$$
\EP_k(f)=\fix_e(f^k)-\bigcup_{d\mid k, d<k}\fix_e(f^d),
$$
the set of \emph{essential periodic points} of $f$ with minimal period $k$. Because
$$
\fix_e(f^k)=\coprod_{d\mid k}\EP_d(f),
$$
we have
$$
N(f^k)=\#\fix_e(f^k)=\sum_{d\mid k}\# \EP_d(f).
$$

\begin{Prop}\label{epp}
For every $k>0$, we have
\begin{align*}
\# \EP_k(f)=\sum_{d\mid k}\mu\!\left(\frac{k}{d}\right)N(f^d)=I_k(f).
\end{align*}
In particular, if $I_k(f)\ne0$ then $N(f^k)\ne0$.
\end{Prop}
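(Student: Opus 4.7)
The plan is to deduce the identity by straightforward Möbius inversion applied to the decomposition displayed just above the proposition, and then to read off the ``in particular'' clause as an immediate consequence.

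First I would take as the starting point the disjoint decomposition
$$
\fix_e(f^k)=\coprod_{d\mid k}\EP_d(f),
$$
which is valid because every essential fixed point of $f^k$ has a well-defined minimal (essential) period dividing $k$, using the inclusions $\fix_e(f^d)\subset\fix_e(f^k)$ whenever $d\mid k$ (the essentiality is preserved under passing to iterates, exactly as in the quoted fact $\fix_e(f)\subset\fix_e(f^2)$). Taking cardinalities gives the arithmetic identity
$$
N(f^k)=\sum_{d\mid k}\#\EP_d(f),
$$
an equality of two arithmetic functions on the divisor lattice of $k$.

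The main step is then a direct application of the Möbius inversion formula: from $N(f^k)=\sum_{d\mid k}\#\EP_d(f)$ one obtains
$$
\#\EP_k(f)=\sum_{d\mid k}\mu\!\left(\frac{k}{d}\right)N(f^d),
$$
and the right-hand side is by definition $I_k(f)$ (see \eqref{eq:dold}). This is the first claimed equality.

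For the ``in particular'' statement, I would simply observe that if $I_k(f)\neq0$ then $\#\EP_k(f)\neq0$, so $\EP_k(f)$ is nonempty; since $\EP_k(f)\subset\fix_e(f^k)$, it follows that $\fix_e(f^k)\neq\emptyset$ and hence $N(f^k)=\#\fix_e(f^k)\geq1>0$. There is no real obstacle here: the entire proof is bookkeeping, and the only point that needs a moment of care is justifying the disjoint-union decomposition of $\fix_e(f^k)$ into the $\EP_d(f)$'s for $d\mid k$, which rests on the standard fact that essentiality is inherited by iterates.
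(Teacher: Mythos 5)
Your proof is correct and takes essentially the same route as the paper: start from the decomposition $\fix_e(f^k)=\coprod_{d\mid k}\EP_d(f)$, take cardinalities, and apply M\"obius inversion to the resulting identity $N(f^k)=\sum_{d\mid k}\#\EP_d(f)$. The only addition is that you spell out the ``in particular'' clause explicitly, which the paper leaves as immediate.
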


\begin{proof}
We apply the M\"{o}bius inversion formula to the above identity and then we obtain $\# \EP_k(f)=\sum_{d\mid k}\mu\!\left(\frac{k}{d}\right)N(f^d)$, which is exactly $I_k(f)$ by its definition.
\end{proof}

\begin{Def}
We consider the mod $2$ reduction of the Nielsen number $N(f^k)$ of $f^k$, written $N^{(2)}(f^k)$. A positive integer $k$ is a \emph{$N^{(2)}$-period} of $f$ if $N^{(2)}(f^{k+i})=N^{(2)}(f^i)$ for all $i\ge1$. We denote the minimal $N^{(2)}$-period of $f$ by $\alpha^{(2)}(f)$.
\end{Def}

\begin{Prop}[{\cite[Proposition~1]{Matsuoka}}]
Let $p$ be a prime number and let $A$ be a square matrix with entries in the field $\bbf_p$. Then there exists $k$ with $(p,k)=1$ such that
$$
\tr A^{k+i}=\tr A^i
$$
for all $i\ge1$.
\end{Prop}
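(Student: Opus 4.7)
The plan is to pass to the algebraic closure $\overline{\bbf_p}$ and reduce the problem to an elementary statement about roots of unity in $\overline{\bbf_p}^\times$.

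First, by considering the Jordan normal form of $A$ over $\overline{\bbf_p}$, let $\lambda_1,\ldots,\lambda_m\in\overline{\bbf_p}$ denote the eigenvalues of $A$ (with multiplicity), so that $\tr A^i=\sum_{j=1}^m\lambda_j^i$ for every $i\ge1$. The key observation is that zero eigenvalues contribute nothing for positive exponents: since $0^i=0$ for $i\ge1$, we have
\[
\tr A^i=\sum_{\lambda_j\ne0}\lambda_j^i\qquad\text{for all }i\ge1.
\]
Equivalently, this reflects the Fitting decomposition $V=V_0\oplus V_1$, where the nilpotent summand $A|_{V_0}$ has $\tr(A|_{V_0})^i=0$ for every $i\ge1$.

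Next, each nonzero $\lambda_j$ lies in $\overline{\bbf_p}^\times$, hence in some finite multiplicative group $\bbf_{p^{r_j}}^\times$ of order $p^{r_j}-1$. Let $d_j$ denote the multiplicative order of $\lambda_j$; then $d_j\mid p^{r_j}-1$, so in particular $\gcd(d_j,p)=1$. Set
\[
k=\lcm\{d_j\mid \lambda_j\ne0\}.
\]
Because $k$ is a least common multiple of integers coprime to $p$, we have $\gcd(k,p)=1$. By construction $\lambda_j^k=1$ for every nonzero eigenvalue $\lambda_j$, whence $\lambda_j^{k+i}=\lambda_j^i$ for all $i\ge1$ and all nonzero $\lambda_j$. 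Combining this with the first step yields, for every $i\ge1$,
\[
\tr A^{k+i}=\sum_{\lambda_j\ne0}\lambda_j^{k+i}=\sum_{\lambda_j\ne0}\lambda_j^i=\tr A^i,
\]
which is exactly the desired conclusion.

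There is no real obstacle beyond noticing the trick that nilpotent contributions vanish from $\tr A^i$ for $i\ge1$; once one restricts to nonzero eigenvalues, the torsion property of $\overline{\bbf_p}^\times$ gives an order coprime to $p$, and the LCM of these orders provides the required $k$.
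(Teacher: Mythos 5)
Your proof is correct. Note that the paper does not actually prove this proposition; it is quoted directly from Matsuoka's paper as a black-box ingredient, so there is no in-paper argument to compare against. Your self-contained argument — triangularize over $\overline{\bbf_p}$, discard zero eigenvalues since $0^i=0$ for $i\ge1$, observe each nonzero eigenvalue has multiplicative order $d_j\mid p^{r_j}-1$ (hence coprime to $p$), and take $k=\lcm\{d_j\}$ — is sound and is essentially the standard route. Two tiny points you might make explicit: (i) $\tr A^i$ is the same whether computed over $\bbf_p$ or $\overline{\bbf_p}$, so the equality proved in the closure descends to $\bbf_p$; and (ii) if $A$ is nilpotent the indexing set is empty and $k$ should be taken as $1$, which indeed satisfies $(p,k)=1$ and makes both sides $0$ for all $i\ge1$.
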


Recalling \eqref{N2}: $N(f^k)=\tr M_+^k-\tr M_-^k=\tr(M_+\oplus -M_-)^k$, we can see easily that the minimal $N^{(2)}$-period $\alpha^{(2)}(f)$ always exists and must be an odd number.

Now we obtain a result which resembles \cite[Theorem~2]{Matsuoka}.
\begin{Thm}\label{3.3.11}
Let $f$ be a map on an infra-solvmanifold of type $\R$. Let $k>0$ be an odd number. Suppose that $\alpha^{(2)}(f)^2\mid k$ or $p\mid k$ where $p$ is a prime such that $p\equiv 2^i\mod{\alpha^{(2)}(f)}$ for some $i\ge0$. Then
$$
\#\{\langle x\rangle \mid x\in \EP_k(f)\}=\# \EP_k(f)/k
$$
is even.
\end{Thm}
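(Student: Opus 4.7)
The plan is to reduce the statement to the parity of $I_k(f)$ and then to exploit a Frobenius-type identity for $\{N(f^k)\bmod 2\}$ coming from \eqref{N2}. First, Proposition~\ref{epp} gives $\#\EP_k(f)=I_k(f)$, and since $f$ permutes $\EP_k(f)$ with every orbit of length exactly $k$, $\#\EP_k(f)/k$ is the number of essential periodic orbits of minimal period $k$; the Gauss congruences \eqref{DN} force $k\mid I_k(f)$, so with $k$ odd the whole theorem reduces to proving $I_k(f)\equiv 0\pmod 2$. Set $\alpha=\alpha^{(2)}(f)$ and use \eqref{N2} to write $N(f^d)=\tr M^d$ with $M:=M_+\oplus(-M_-)$ an integer matrix. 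The standard characteristic-$2$ Frobenius identity $\tr(A^2)=(\tr A)^2=\tr A$ for matrices over $\bbf_2$ then yields
\[
N(f^{2^jd})\equiv N(f^d)\pmod{2}\qquad(d\ge 1,\ j\ge 0),
\]
and combined with the $\alpha$-periodicity of $\{N(f^k)\bmod 2\}$ this upgrades to the key congruence $N(f^{qd})\equiv N(f^d)\pmod 2$ whenever $q\equiv 2^j\pmod\alpha$.

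For the case in which some prime $p\mid k$ satisfies $p\equiv 2^i\pmod\alpha$, I would write $k=p^rm$ with $r\ge 1$ and $\gcd(p,m)=1$, and observe that the divisors $d\mid k$ with $\mu(k/d)\ne 0$ are exactly those of the form $d=p^sm'$ with $s\in\{r-1,r\}$, $m'\mid m$ and $m/m'$ squarefree. Grouping these in pairs with opposite M\"obius signs gives
\[
I_k(f)=\sum_{m/m'\text{ sqfree}}\mu(m/m')\bigl[N(f^{p^rm'})-N(f^{p^{r-1}m'})\bigr],
\]
and the key congruence with $q=p$ makes every bracket vanish mod~$2$.

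For the case $\alpha^2\mid k$ (with $k>1$), the main observation is that every divisor $d\mid k$ with $k/d$ squarefree is automatically a multiple of $\alpha$: for each prime $q\mid\alpha$ with $v_q(\alpha)=e_q$, the hypothesis forces $v_q(k)\ge 2e_q$, so squarefreeness of $k/d$ gives $v_q(d)\ge v_q(k)-1\ge 2e_q-1\ge e_q$. By $\alpha$-periodicity, $N(f^d)\equiv N(f^\alpha)\pmod 2$ for every such $d$, hence
\[
I_k(f)\equiv N(f^\alpha)\sum_{d\mid k}\mu(k/d)\equiv 0\pmod 2,
\]
using $\sum_{e\mid k}\mu(e)=0$ for $k>1$. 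The step I expect to need the most care in is justifying cleanly the Frobenius identity $N(f^{2d})\equiv N(f^d)\pmod 2$ via \eqref{N2}; once that ``doubling dictionary'' between \eqref{N2} and characteristic~$2$ is in place, both cases close by elementary M\"obius/divisor bookkeeping.
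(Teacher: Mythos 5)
Your proof is correct and follows essentially the same route as the paper's: reduce to showing $I_k(f)$ is even (via Proposition~\ref{epp} and the Gauss congruences with $k$ odd), then exploit the $\alpha^{(2)}(f)$-periodicity of $\{N(f^m)\bmod 2\}$ together with the Frobenius trace identity over $\bbf_2$ obtained from \eqref{N2} — the paper invokes it as \cite[Proposition~5]{Browder}, you derive it from $\tr(A^2)=(\tr A)^2=\tr A$ — and close both cases by M\"obius bookkeeping. The only cosmetic difference is that the paper factors $I_k(f)=I_r(f^{p^j})-I_r(f^{p^{j-1}})$ and argues at the level of Dold multiplicities, whereas you keep the full divisor sum and pair terms directly; you also spell out the $p$-adic valuation argument for $\alpha\mid d$ in the case $\alpha^2\mid k$ and explicitly flag the (harmless) assumption $k>1$ there, both of which the paper leaves implicit.
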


\begin{proof}
By Proposition~\ref{epp}, $\# \EP_k(f)=I_k(f)$. Hence it is sufficient to show that $I_k(f)$ is even.

Let $\alpha=\alpha^{(2)}(f)$. Consider the case where $\alpha^2\mid k$. If $d\mid k$ and $\mu(k/d)\ne0$ then it follows that $\alpha\mid d$. By the definition of $\alpha$, $N^{(2)}(f^d)=N^{(2)}(f^\alpha)$. This induces that
\begin{align*}
I_k(f)&\equiv \sum_{d\mid k}\mu\!\left(\frac{k}{d}\right)N^{(2)}(f^d)
=N^{(2)}(f^\alpha)\sum_{d\mid k}\mu\!\left(\frac{k}{d}\right)=0 \mod2.
\end{align*}

Assume $p$ is a prime such that $p\mid k$ and $p\equiv2^i\!\!\mod{\alpha}$ for some $i\ge0$. Write $k=p^jr$ where $(p,r)=1$. Then
\begin{align*}
I_k(f)&
=\sum_{d\mid r}\mu\!\left(\frac{r}{d}\right)\left(\sum_{e\mid p^j}\mu\!\left(\frac{p^j}{e}\right)N((f^d)^e)\right)\\
&=\sum_{d\mid r}\mu\!\left(\frac{r}{d}\right)\left(\mu\!\left(1\right)N((f^d)^{p^j})+\mu\!\left(p\right)N((f^d)^{p^{j-1}})\right)\\
&=I_r(f^{p^j})-I_r(f^{p^{j-1}}).
\end{align*}
Since $\alpha$ is a $N^{(2)}$-period of $f$, it follows that the sequence $\{I_r(f^i)\}_i$ is $\alpha$-periodic in its $\mod2$ reduction, i.e., $I_r(f^{j+\alpha})\equiv I_r(f^i)\mod2$ for all $j\ge1$. Since $p\equiv2^i\mod{\alpha}$, we have  $I_r(f^{p^s})\equiv I_r(f^{2^{is}})\mod2$ for all $s\ge0$. Recall \cite[Proposition~5]{Browder}: For any square matrix $B$ with entries in the field $\bbf_p$ and for any $j\ge0$, we have $\tr B^{p^j}=\tr B$. Due to this result, we obtain
$$
N^{(2)}(f^{2^j})\equiv\tr(M_+\oplus -M_-)^{2^j}\equiv\tr(M_+\oplus -M_-)\equiv N^{(2)}(f) \mod2
$$
and it follows that $I_r(f^{2^{is}})\equiv I_r(f)\mod2$. Consequently, we have
$$
I_k(f)=I_r(f^{p^j})-I_r(f^{p^{j-1}})\equiv I_r(f^{2^{ij}})-I_r(f^{2^{i(j-1)}})\equiv I_r(f)-I_r(f)=0\mod2.
$$
This finishes the proof.
\end{proof}

\section{Homotopy minimal periods}

In this section, we study (homotopy) minimal periods of maps $f$ on infra-solvmanifolds of type $\R$. We want to determine $\HPer(f)$ only from the knowledge of the sequence $\{N(f^k)\}$. This approach was used in \cite{ABLSS, Hal, JL} for maps on tori, in \cite{JM1,JM2,JKM,JM,LZ-china,LZ-agt} for maps on nilmanifolds and some solvmanifolds, and in \cite{LL-JGP,LZ-jmsj} for expanding maps on infra-nilmanifolds.

\begin{Thm}\label{3.2.48}
Let $f$ be a map on an infra-solvmanifold of type $\R$ with $\lambda(f)>1$.
If the sequence $\{N(f^k)/\lambda(f)^k\}$ is asymptotically periodic,
then there exist an integer $m>0$ and an infinite sequence $\{p_i\}$ of primes
such that $\{mp_i\}\subset\Per(f)$. {Furthermore, $\{mp_i\}\subset\HPer(f)$.}
\end{Thm}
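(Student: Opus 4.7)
The plan is to combine a Möbius identity for $I_{mp}(f)$ with Dirichlet's theorem on primes in arithmetic progressions. Under the asymptotic periodicity hypothesis (case~(2) of Theorem~\ref{thm:Asymptotic}), each dominant eigenvalue has the form $\lambda_j=\lambda(f)\epsilon_j$ with $\epsilon_j^q=1$ for a common $q>0$, so
\[
P(k):=\sum_{|\lambda_j|=\lambda(f)}\rho_j\epsilon_j^k
\]
is periodic of period $q$ and satisfies $\Gamma(f^k)=\lambda(f)^kP(k)$. Corollary~\ref{3.2.47} gives $\limsup_k P(k)>0$, so I fix an integer $m>0$ with $P(m)>0$.

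For a prime $p$ coprime to $m$, partitioning the divisors of $mp$ into $\{d:d\mid m\}\cup\{pd:d\mid m\}$ and using $\mu(pn)=-\mu(n)$ when $\gcd(p,n)=1$ yields the key identity
\[
I_{mp}(f)=I_m(f^p)-I_m(f).
\]
Writing $I_m(f^p)=\sum_i\rho_i\sum_{d\mid m}\mu(m/d)\lambda_i^{pd}$, for each fixed $\lambda_i$ the $d=m$ summand dominates as $p\to\infty$ (since $\lambda(f)>1$ forces $|\lambda_i|^{pd}/|\lambda_i|^{pm}\to0$ for $d<m$), while the total contribution of the $\lambda_j$ with $|\lambda_j|<\lambda(f)$ is $o(\lambda(f)^{pm})$. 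This gives
\[
I_m(f^p)=\lambda(f)^{pm}P(pm)+o(\lambda(f)^{pm})\qquad(p\to\infty).
\]

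To force $P(pm)=P(m)>0$, I restrict to primes $p\equiv 1\pmod{q/\gcd(m,q)}$, of which Dirichlet's theorem supplies infinitely many, $p_1<p_2<\cdots$. For every sufficiently large $p_i$ in this progression, $I_m(f^{p_i})\to+\infty$ while $I_m(f)$ is a fixed integer, so $I_{mp_i}(f)\ne0$, and Proposition~\ref{epp} then gives $mp_i\in\Per(f)$. For the $\HPer$ conclusion, $N(g^k)=N(f^k)$ for any $g\simeq f$ implies $I_{mp_i}(g)=I_{mp_i}(f)\ne0$, so $g^{mp_i}$ has an essential fixed point class $\bbf$ of length exactly $mp_i$. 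A standard boosting argument completes the proof: if some $x\in\bbf$ had minimal period $e<mp_i$, the fixed point class of $g^e$ containing $x$ would sit inside $\bbf$, forcing $g^e(\bbf)=\bbf$ and contradicting the minimality of the length of $\bbf$, so $\bbf$ must contain a point of minimal period $mp_i$ and $mp_i\in\HPer(f)$. The main technical step is the asymptotic estimate for $I_m(f^p)$, which hinges on uniformly controlling the subdominant contributions; the Möbius bookkeeping, Dirichlet input, and Nielsen-theoretic boosting step are then routine.
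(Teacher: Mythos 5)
Your proof is correct and reaches the same conclusion, but it takes a genuinely different route from the paper's. The paper's strategy is to pass to $h=f^m$ (with $m\le q$ chosen so that $\tilde N(f^m)>0$), invoke Corollary~\ref{3.2.47} and Proposition~\ref{BaBo2.8} to produce $\gamma>0$ with $|I_{1+\ell q}(h)|\ge(\gamma/2)\lambda(h)^{1+\ell q}$ for $\ell$ large, pick primes $p=1+\ell q$ via Dirichlet, conclude that $h$ has an essential periodic point of minimal period $p$, and then unwind: such a point has $f$-minimal period $m_ip$ for some $m_i\mid m$, which forces a subsequence argument to stabilize the multiplier $m_0$. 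You instead work with $I_{mp}(f)$ directly, using the M\"obius splitting $I_{mp}(f)=I_m(f^p)-I_m(f)$ (valid for $\gcd(m,p)=1$) and a hand-rolled asymptotic $I_m(f^p)=\lambda(f)^{pm}P(pm)+o(\lambda(f)^{pm})$, together with the weaker Dirichlet condition $p\equiv1\pmod{q/\gcd(m,q)}$ (which is all that is needed to keep $P(pm)=P(m)>0$). This is tidier in two respects: the multiplier $m$ is fixed once and for all, with no subsequencing, and you bypass the step of translating ``minimal period of $h$'' into ``minimal period of $f$.'' What it costs is that you cannot reuse Proposition~\ref{BaBo2.8} off the shelf and must control the subdominant terms of $I_m(f^p)$ by hand, though the computation is routine given $\lambda(f)>1$. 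Both proofs land the $\HPer$ claim by the same boosting argument, which in both write-ups is terse; the rigorous underpinning is essential reducibility of infra-solvmanifolds of type $\R$ (Lemmas~\ref{Hal} and~\ref{er}), and it would be worth citing them explicitly at that point, as your sketch of the boosting step --- passing from ``$\bbf$ is essential and irreducible'' to ``$\bbf$ contains a point of minimal period $mp_i$ for every $g\simeq f$'' --- is exactly the content that requires those lemmas.
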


\begin{proof}
For simplicity, we denote $\tilde{N}(f^k)=\Gamma(f^k)/\lambda(f)^k$.
By Corollary~\ref{3.2.47}, $\lambda(f)\ge1$ implies that $\limsup N(f^k)/\lambda(f)^k=\limsup \tilde{N}(f^k)>0$.
We consider the condition that the sequence $\{\tilde{N}(f^k)\}$ is periodic.
By Theorem~\ref{thm:Asymptotic},
we can choose $q$ such that
the sequence $\{\tilde{N}(f^k)\}$ is $q$-periodic and nonzero.
Consequently, there exists $m$ with $1\le m\le q$ such that $\tilde{N}(f^m)\ne0$.

Let $h=f^m$. Then $\lambda(h)=\lambda(f^m)=\lambda(f)^m\ge1$.
The periodicity $\tilde{N}(f^{m+\ell q})=\tilde{N}(f^m)$ induces
$\tilde{N}(h^{1+\ell q})=\tilde{N}(h)$ for all $\ell>0$.
By Corollary~\ref{3.2.47} again,
we can see that there exists $\gamma>0$ such that $N(h^{1+\ell q})\ge \gamma\lambda(h)^{1+\ell q}>0$ for all $\ell$ sufficiently large.
Since $\lambda(f)>1$, we have $\lambda(h)>1$ and
it follows from Proposition~\ref{BaBo2.8} that the Dold multiplicity $I_{1+\ell q}(h)$ satisfies $|I_{1+\ell q}(h)|\ge (\gamma/2)\lambda(h)^{1+\ell q}$ when $\ell$ is sufficiently large.

According to Dirichlet prime number theorem, since $(1,q)=1$, there are infinitely many primes $p$ of the form $1+\ell q$. Consider all primes $p_i$ satisfying $|I_{p_i}(h)|\ge (\gamma/2)\lambda(h)^{p_i}$. Remark that for any prime number $p$,
\begin{align*}
I_p(h)&=\sum_{d\mid p}\mu\!\left(\frac{p}{d}\right)N(h^d)=\mu(p)N(h)+\mu(1)N(h^p)=N(h^p)-N(h)\\
&=\#\fix_e(h^d)-\#\fix_e(h)=\#\!\left(\fix_e(h^p)-\fix_e(h)\right)
\end{align*}
where the last identity follows from that fact that $\fix_e(h)\subset\fix_e(h^p)$. Since $p$ is a prime, the set $\fix_e(h^p)-\fix_e(h)$ consists of essential periodic points of $h$ with minimal period $p$.

Because $|I_{p_i}(h)|>0$, each $p_i$ is the minimal period of some essential periodic point of $h$. Thus $mp_i$ is a period of $f$. This means that $m_ip_i$ is the minimal period of $f$ for some $m_i$ with $m_i\mid m$. Choose a subsequence $\{m_{i_k}\}$ of the sequence $\{m_i\}$ bounded by $m$ which is constant, say $m_0$. Consequently, the infinite sequence $\{m_0p_{i_k}\}$ consists of minimal periods of $f$, or $\{m_0p_i\}\subset\Per(f)$.

{These arguments also work for all maps homotopic to $f$. Hence $\{m_0p_i\}\subset\HPer(f)$,}
which completes the proof.

We next consider the condition that the sequence $\{\tilde{N}(f^k)\}$ contains an interval.
This means by Theorem~\ref{thm:Asymptotic} that the set of limit points
of the sequence $\{N(f^k)/\lambda(f)^k\}$ contains an interval.
\end{proof}

The following example shows that the condition $\lambda(f)>1$ in Theorem~\ref{3.2.48} is essential.
This condition is equivalent to the unboundedness of the sequence $\{N(f^k)\}$ by Theorem~\ref{thm:Radius1}.

\begin{Example}
Consider the map $f$ on $T^2$ induced by the matrix
$$
D=\left[\begin{matrix}\hspace{8pt}0&\hspace{8pt}1\\-1&-1\end{matrix}\right].
$$
We have observed in Example~\ref{exa:JM} that
\begin{align*}
&\{N(f^k)\}=\{2-(\omega^k+\bar\omega^k)\}\ \text{ is bounded where $\omega=-{1}/{2}+{\sqrt{3}i}/{2}$},\\
%&N_f(z)=\frac{(1-\omega z)(1-\bar\omega z)}{(1-z)^2}\ \text{ where $\omega=-\frac{1}{2}+\frac{\sqrt{3}}{2}i$},\\
&\lambda(f)=\max\{|\omega|,|\bar\omega|,1\}=1,\\
&\{\tilde{N}(f^k)\}=\{2-(\omega^k+\bar\omega^k)\}\ \text{ is $3$-periodic.}
\end{align*}
Observe also that since $f^3=\id$ we have $\Per(f)\subset\{1,2,3\}$.
In fact, we can see that $\Per(f)=\{1,3\}$.
\end{Example}

{In the proof of Theorem~\ref{3.2.48}, we have shown the following, which proves that the algebraic period is a homotopy minimal period when it is a prime number.}

\begin{Cor}\label{3.2.50}
Let $f$ be a map on an infra-solvmanifold of type $\R$.
For any prime $p$, if $A_p(f)\ne0$ then {$p\in\HPer(f)$}.
\end{Cor}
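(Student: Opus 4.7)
The plan is to extract from the proof of Theorem~\ref{3.2.48} the fragment that applies to a single prime exponent $p$, discarding the Dirichlet-primes and asymptotic machinery. By the definitions in \eqref{eq:dold}, the hypothesis $A_p(f)\ne 0$ is equivalent to $I_p(f)=pA_p(f)\ne 0$; because $p$ is prime the M\"obius sum collapses to $I_p(f)=N(f^p)-N(f)$, and Proposition~\ref{epp} identifies this integer with $\#\EP_p(f)$. Hence $\EP_p(f)$ is nonempty, so $f$ possesses an essential periodic point of minimal period $p$, and in particular $P_p(f)\ne\emptyset$.

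To promote this to the homotopy-invariant conclusion $p\in\HPer(f)$, I use that every $N(f^k)$---and hence every $A_k(\cdot)$---is a homotopy invariant. For any $g\simeq f$ we therefore have $A_p(g)=A_p(f)\ne 0$, and since $g$ is itself a continuous self-map of the infra-solvmanifold $M$ of type $\R$, Proposition~\ref{epp} applies verbatim to $g$ and gives $\#\EP_p(g)=I_p(g)\ne 0$, producing a genuine periodic point of $g$ of minimal period $p$. Since this holds for every $g\simeq f$, we conclude $p\in\HPer(f)$.

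The primality of $p$ enters in exactly one place: it is needed to force each element of $\fix_e(g^p)\setminus\fix_e(g)$ to have minimal period $p$ rather than $1$, as the only divisors of $p$ are $1$ and $p$ itself. This identification---the translation of ``nonvanishing Dold multiplicity'' into ``existence of a minimal-period-$p$ orbit''---is where real content sits, but it has already been carried out in the proof of Theorem~\ref{3.2.48}, so no additional obstacle remains. Consequently the argument is essentially a bookkeeping statement: the corollary is the natural ``one-prime'' distillation of the $\{m_0p_i\}\subset\HPer(f)$ conclusion.
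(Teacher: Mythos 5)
Your proposal is correct and follows essentially the same route as the paper, which itself gives no separate argument for this corollary but simply points back to the fragment of Theorem~\ref{3.2.48} that you extracted: $I_p(f)=N(f^p)-N(f)=\#\EP_p(f)$, primality forcing every element of $\fix_e(g^p)\setminus\fix_e(g)$ to have minimal period $p$, and homotopy invariance of $A_p$ to pass from $f$ to an arbitrary $g\simeq f$. One small remark worth adding: the step from ``$\EP_p(g)\ne\emptyset$'' to ``$g$ has a genuine periodic point of minimal period $p$'' tacitly uses that a fixed point class of $g$ lying inside an essential class of $g^p$ is itself essential, i.e.\ essential reducibility (Lemma~\ref{er}); the paper relies on this just as silently in Theorem~\ref{3.2.48}, so you are not worse off, but an alternative self-contained route is to note $I_p(f)>0$ (non-negativity from Theorem~\ref{thm:Dold}), hence $N(f)<N(f^p)$, and invoke Lemma~\ref{Hal} together with Lemma~\ref{er} directly.
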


\begin{Cor}\label{3.2.51}
Let $f$ be a map on an infra-solvmanifold of type $\R$ with $\lambda(f)>1$.
If the sequence $\{N(f^k)\}$ is eventually monotone increasing,
then there exists $N$ such that the set {$\HPer(f)$} contains all primes larger than $N$.
\end{Cor}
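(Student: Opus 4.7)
The plan is to reduce the statement to Corollary~\ref{3.2.50}, which says that $A_p(f)\ne 0$ for a prime $p$ implies $p\in\HPer(f)$. For a prime $p$ the M\"obius sum collapses to
$$
A_p(f)=\frac{1}{p}\sum_{d\mid p}\mu\!\left(\frac{p}{d}\right)N(f^d)=\frac{N(f^p)-N(f)}{p},
$$
so it suffices to exhibit an integer $N$ with $N(f^p)\ne N(f)$ for every prime $p>N$; then Corollary~\ref{3.2.50} takes care of the rest.

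To produce such an $N$, I would combine two ingredients. First, Theorem~\ref{thm:Radius1} asserts that $\lambda(f)>1$ is equivalent to the sequence $\{N(f^k)\}$ being unbounded. Second, by hypothesis there is some $N_0$ with $N(f^{k+1})\ge N(f^k)$ for all $k\ge N_0$. A weakly monotone increasing sequence of integers is either eventually constant (hence bounded) or diverges to $+\infty$; since unboundedness rules out the first alternative, we conclude $N(f^k)\to\infty$.

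Therefore one can pick $N\ge N_0$ large enough that $N(f^k)>N(f)$ whenever $k>N$. For each prime $p>N$ this yields $A_p(f)=(N(f^p)-N(f))/p>0$, and Corollary~\ref{3.2.50} gives $p\in\HPer(f)$, as required.

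There is no real obstacle: the argument is just a packaging of Corollary~\ref{3.2.50} together with the elementary observation about monotone unbounded sequences, the translation between $\lambda(f)>1$ and unboundedness of $\{N(f^k)\}$ being supplied by Theorem~\ref{thm:Radius1}.
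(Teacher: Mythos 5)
Your proof is correct, and it takes a genuinely more elementary route than the paper's. The paper invokes Theorem~\ref{thm:BaBo2.7} to produce $\gamma>0$ and $N$ with $N(f^{k-\ell})/\lambda(f)^{k-\ell}>\gamma$ for some $\ell<r(f)$, then uses the monotonicity to propagate this bound to $N(f^k)/\lambda(f)^k\ge\gamma/\lambda(f)^{r(f)}$, and finally applies Proposition~\ref{BaBo2.8} to deduce $I_k(f)\ne0$ for \emph{all} $k$ sufficiently large before restricting attention to primes and quoting Corollary~\ref{3.2.50}. You bypass all of that machinery: from $\lambda(f)>1$ you get unboundedness of $\{N(f^k)\}$ via Theorem~\ref{thm:Radius1}, combine it with eventual (weak) monotonicity to conclude $N(f^k)\to\infty$, and then for any large prime $p$ observe directly that $A_p(f)=(N(f^p)-N(f))/p>0$, which is exactly what Corollary~\ref{3.2.50} needs. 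Both arguments are sound. The trade-off is that the paper's heavier proof establishes the strictly stronger fact that $I_k(f)>0$ for all large $k$ (not just primes), and this stronger conclusion is explicitly reused later in the proof of Corollary~\ref{cofinite}; your streamlined argument would not suffice there. For the statement as literally stated, though, your approach is cleaner and buys the result at lower cost.
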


\begin{proof}
Since $\lambda(f)>1$, by Theorem~\ref{thm:BaBo2.7} there exist $\gamma>0$ and $N$ such that if $k>N$ then there exists $\ell=\ell(k)<r(f)$ such that $N(f^{k-\ell})/\lambda(f)^{k-\ell}>\gamma$. Then for all sufficiently large $k$, the monotonicity induces
\begin{align*}
\frac{N(f^k)}{\lambda(f)^k}\ge \frac{N(f^{k-\ell})}{\lambda(f)^k}=\frac{N(f^{k-\ell})}{\lambda(f)^{k-\ell}\lambda(f)^\ell}
\ge\frac{\gamma}{\lambda(f)^\ell}\ge\frac{\gamma}{\lambda(f)^{r(f)}}.
\end{align*}
Applying Proposition~\ref{BaBo2.8} with $\epsilon=\gamma/\lambda(f)^{r(f)}$, we see that $I_k(f)\ne0$ and so $A_k(f)\ne0$ for all $k$ sufficiently large. Now our assertion follows from Corollary~\ref{3.2.50}.
\end{proof}

{We next recall the following:}
\begin{Def}
A map $f:M\to M$ is \emph{essentially reducible} if any fixed point class of $f^k$ being contained in an essential fixed point class of $f^{kn}$ is essential, for any positive integers $k$ and $n$. The space $M$ is \emph{essentially reducible} if every map on $M$ is essentially reducible.
\end{Def}

\begin{Lemma}[{\cite[Proposition~2.2]{ABLSS}}]\label{Hal}
Let $f:M\to M$ be an essentially reducible map. If
$$
\sum_{\frac{m}{k}:\text{ {\rm prime}}} N(f^k)< N(f^m),
$$
then any map which is homotopic to $f$ has a periodic point with minimal period $m$, i.e., $m\in\HPer(f)$.
\end{Lemma}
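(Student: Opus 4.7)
The plan is to exhibit an essential fixed point class of $f^m$ that is not ``boosted'' from any lower iterate $f^{m/p}$, and then to argue that any point of such a class has minimal period exactly $m$ (for $f$, and by homotopy invariance of the whole construction, for any map homotopic to $f$). For each prime $p\mid m$, the inclusion $\fix(f^{m/p})\subset\fix(f^m)$ induces a natural ``boost'' map $B_p$ sending each fixed point class $\bbf_0$ of $f^{m/p}$ to the unique fixed point class of $f^m$ containing $\bbf_0$. The essential-reducibility hypothesis on $f$ says precisely that whenever $B_p(\bbf_0)$ is essential, so is $\bbf_0$; hence every essential class of $f^m$ lying in the image of some $B_p$ is the boost of an \emph{essential} class of $f^{m/p}$.

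Counting such boosts, the number of essential classes of $f^m$ arising as $B_p(\bbf_0)$ for some prime $p\mid m$ and some essential $\bbf_0$ is at most
$$
\sum_{p\mid m,\ p\text{ prime}} N(f^{m/p})=\sum_{m/k\text{ prime}} N(f^k),
$$
which by hypothesis is strictly less than $N(f^m)$. Therefore there exists an essential fixed point class $\bbf$ of $f^m$ which is \emph{not} in the image of any $B_p$. For any $x\in\bbf$: if $x$ had period $k<m$ with $k\mid m$, then $k$ would divide $m/p$ for some prime $p\mid m$, so $x\in\fix(f^{m/p})$ would lie in some fixed point class $\bbf_0$ of $f^{m/p}$, forcing $B_p(\bbf_0)=\bbf$ and contradicting the choice of $\bbf$. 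Thus every point of $\bbf$ has minimal period exactly $m$; since $\bbf$ is essential and hence nonempty, $f$ itself has a periodic point with minimal period $m$.

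To upgrade this to $m\in\HPer(f)$, I would observe that the Nielsen numbers are homotopy invariants, so the hypothesis holds for every $g\simeq f$. The standard bijection between fixed point classes of $f^m$ and $g^m$ induced by a homotopy preserves essentiality and, crucially, commutes with the boost maps $B_p$: both are governed by conjugacy classes of lifts to the universal cover, which are transported by the homotopy. Hence the class $\bbf_g$ of $g^m$ corresponding to $\bbf$ is still essential and still not a boost, and the period argument above reapplies \emph{verbatim} to $g$ using only the structural fact that any $x\in\fix(g^{m/p})$ sits inside a fixed point class of $g^{m/p}$ whose boost is the class of $g^m$ containing $x$. The main subtlety, which I view as the principal obstacle to get right, is exactly this last point: essential reducibility of $f$ is needed to bound the boost-type essential classes of $f^m$ by $\sum N(f^{m/p})$, but once a non-boost essential class of $g^m$ has been produced through homotopy, no reducibility assumption on $g$ is required to read off a minimal period $m$ point from it.
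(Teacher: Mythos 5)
The paper does not prove this lemma; it is quoted directly from Alsed\`a et al.\ (Proposition~2.2 of \cite{ABLSS}), so there is no in-paper proof to compare against. Your reconstruction is correct and is the standard Nielsen periodic point argument: the boost maps $B_p$ carry classes of $f^{m/p}$ into classes of $f^m$, essential reducibility guarantees that an essential class of $f^m$ in the image of some $B_p$ must be the boost of an \emph{essential} class, so the number of essential ``reducible'' classes of $f^m$ is bounded by $\sum_{m/k\ \mathrm{prime}}N(f^k)<N(f^m)$, and any leftover essential irreducible class $\bbf$ of $f^m$ is nonempty and consists only of points of minimal period exactly $m$. The passage to $\HPer(f)$ is also fine: the homotopy-induced bijections on fixed point classes commute with the boost maps and preserve the index, so the irreducible essential class transports to $g^m$ for every $g\simeq f$; equivalently one can note that essential reducibility itself is a homotopy-invariant property and run the whole argument on $g$ directly. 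One small technical caveat (not a gap): your geometric description of $B_p(\bbf_0)$ as ``the class of $f^m$ containing $\bbf_0$'' is literal only when $\bbf_0\neq\emptyset$; the boost is cleanest on Reidemeister/lift-conjugacy classes, but since every class you actually manipulate is essential and hence nonempty, your formulation is adequate here.
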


\begin{Lemma}\label{er}
Every infra-solvmanifold of type $\R$ is essentially reducible.
\end{Lemma}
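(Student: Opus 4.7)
The plan is to reduce to an affine map and close with a short eigenvalue argument. Essential reducibility is a homotopy-invariant property of $f$: it is expressed entirely in terms of lifting classes of $f^k$, their indices, and the natural iteration map $[\tilde{f}_k]\mapsto[(\tilde{f}_k)^n]$ into lifting classes of $f^{kn}$, all of which transport naturally under a homotopy. Since every map on an infra-solvmanifold $M$ of type $\R$ is homotopic to a map $\bar{f}$ induced by an affine map $(d,D):S\to S$, I may replace $f$ by such a $\bar{f}$ throughout.

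Fix positive integers $k,n$, and let $\bbf$ be a fixed point class of $\bar{f}^k$ contained in an essential fixed point class $\bbf'$ of $\bar{f}^{kn}$. Every lift of $\bar{f}^k$ has the affine form $\tilde{f}_k=(a,A_*D_*^k)$ for some $A\in\Phi$ and $a\in S$, and $(\tilde{f}_k)^n$ is a lift of $\bar{f}^{kn}$ whose class represents $\bbf'$. By \cite[Proposition~9.3]{FL} applied to $\bar{f}^{kn}$, the essential class $\bbf'=\{x\}$ consists of a single point and $\det(I-(A_*D_*^k)^n)\ne 0$. Letting $\mu_1,\dots,\mu_m$ denote the eigenvalues of $A_*D_*^k$, the eigenvalues of $(A_*D_*^k)^n$ are $\mu_1^n,\dots,\mu_m^n$, and the nonvanishing of $\prod_i(1-\mu_i^n)$ forces each $\mu_i$ to avoid being an $n$-th root of unity; in particular $\mu_i\ne 1$ for every $i$, so $\det(I-A_*D_*^k)\ne 0$. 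Hence the affine lift $\tilde{f}_k$ has a (unique) fixed point, which, together with $\bbf\subseteq\bbf'=\{x\}$, forces $\bbf=\{x\}$. Applying \cite[Proposition~9.3]{FL} once more, now to $\bar{f}^k$, the index of $\bbf$ at $x$ equals $\sgn\det(I-A_*D_*^k)=\pm 1\ne 0$, so $\bbf$ is essential.

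The heart of the argument is the trivial implication $\det(I-A^n)\ne 0\Rightarrow\det(I-A)\ne 0$, which rests on the fact that $1$ is itself an $n$-th root of unity; combined with the strong rigidity of \cite[Proposition~9.3]{FL}, which makes every essential class a single point with index $\pm 1$, this reduces essential reducibility on infra-solvmanifolds of type $\R$ to a one-line eigenvalue check. The main subtlety to watch is that the set-theoretic inclusion $\bbf\subseteq\bbf'$ alone does not immediately rule out $\bbf=\emptyset$, and an empty class is never essential; this potential obstruction is closed precisely by the nonvanishing of $\det(I-A_*D_*^k)$, which produces an honest fixed point of the affine lift that must then coincide with $x$. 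The homotopy-invariance reduction is conceptually routine but does depend on recognizing that the set-theoretic containment between classes at levels $k$ and $kn$ corresponds, at the lift level, to the algebraic iteration map on lift classes, which is what makes the property manifestly preserved under homotopy.
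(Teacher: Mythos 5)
Your proof is correct, but it takes a genuinely different route from the paper's. The paper proceeds by a covering-space descent: it takes a fully invariant finite-index subgroup $\Lambda\subset\Gamma\subset\Pi$ (from \cite[Lemma~2.1]{LL-Nagoya}), so that $\bar M=\Lambda\backslash S$ is a special solvmanifold and a regular finite cover of $M$ to which every map $f$ lifts; it then quotes Heath--Keppelmann (\cite[Corollary~4.5]{HK}) for essential reducibility on special solvmanifolds, and finally a descent lemma of Lee--Zhao (\cite[Proposition~2.4]{LZ-jmsj}) which pushes essential reducibility down from a regular cover. Your argument instead homotopes $f$ to an affine map, identifies the Nielsen class of a lift with linear part $B=A_*D_*^k$ as essential precisely when $\det(I-B)\ne 0$ (via \cite[Proposition~9.3]{FL}), and closes with the elementary implication $\det(I-B^n)\ne 0\Rightarrow\det(I-B)\ne 0$. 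What your route buys is a more self-contained, computational proof that makes the mechanism transparent (the rigidity of affine maps on type~$\R$ groups does all the work); what the paper's route buys is that it never has to justify the homotopy invariance of essential reducibility or the equivalence ``essential $\Leftrightarrow\det(I-B)\ne 0$'' from scratch, since both are subsumed in the cited results for covers. Two small cautions for your write-up: first, the ``containment'' relation between classes of $f^k$ and $f^{kn}$ should be stated as the algebraic boosting correspondence $[\tilde f_k]\mapsto[(\tilde f_k)^n]$ on lifting classes rather than as literal set inclusion (as you note, an empty class is set-theoretically contained in everything, so the boosting formulation is the right one); second, the homotopy invariance of essential reducibility, while true, deserves a sentence --- one needs the index-preserving bijections between Nielsen classes at each iterate, induced by a fixed lift $\tilde H_t$ and its powers $\tilde H_t^{\,k}$, to commute with boosting, which holds precisely because the bijections at all levels are taken coherently from the same homotopy.
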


\begin{proof}
Let $f:M\to M$ be a map on an infra-solvmanifold $M=\Pi\bs{S}$ of type $\R$. Then $\Pi$ fits a short exact sequence
$$
1\lra\Gamma\lra\Pi\lra\Phi\lra1
$$
where $\Gamma=\Pi\cap S$ and the holonomy group $\Phi$ of $\Pi$ naturally sits in $\aut(S)$. By \cite[Lemma~2.1]{LL-Nagoya}, we know that $\Pi$ has a fully invariant subgroup $\Lambda$ of finite index and $\Lambda\subset\Gamma$. Therefore $\Lambda\subset\Gamma\subset S$ and $\bar{M}=\Lambda\bs{S}$ is a special solvmanifold which covers $M$. Since $\Lambda$ is a fully invariant subgroup of $\Pi$, it follows that any map $f:M\to M$ has a lifting $\bar{f}:\bar{M}\to\bar{M}$, and $\bar{M}$ is a regular covering of $M$. By \cite[Corollary~4.5]{HK}, $\bar{f}$ is essentially reducible and then by \cite[Proposition~2.4]{LZ-jmsj}, $f$ is essentially reducible.
\end{proof}

{We can not only extend but also strengthen Corollary~\ref{3.2.51} as follows:}
\begin{Prop}\label{prime power}
Let $f$ be a map on an infra-solvmanifold of type $\R$. Suppose that the sequence $\{N(f^k)\}$ is strictly monotone increasing. Then:
\begin{enumerate}
\item[$(1)$] All primes belong to $\HPer(f)$.
\item[$(2)$] There exists $N$ such that if $p$ is a prime $>N$ then $\{p^n\mid n\in\bbn\}\subset\HPer(f)$.
\end{enumerate}
\end{Prop}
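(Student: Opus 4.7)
The plan is to reduce both assertions to the criterion in Lemma~\ref{Hal} by noting that infra-solvmanifolds of type $\R$ are essentially reducible (Lemma~\ref{er}). Lemma~\ref{Hal} says it suffices, for each target $m$, to verify the inequality
\[
\sum_{\frac{m}{k}:\,\text{prime}} N(f^k) < N(f^m).
\]
In both (1) and (2) the divisor structure of $m$ makes the left-hand sum collapse to a single term, so strict monotonicity of $\{N(f^k)\}$ handles the inequality directly.

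For part (1) fix a prime $p$. The divisors $k\mid p$ with $p/k$ prime reduce to the single choice $k=1$, so the Lemma~\ref{Hal} hypothesis becomes simply $N(f)<N(f^p)$. Strict monotonicity gives this at once, so $p\in\HPer(f)$.

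For part (2) fix a prime $p$ and $n\ge1$. Among the divisors $k$ of $p^n$, the only one with $p^n/k$ prime is $k=p^{n-1}$ (the quotient must then equal $p$). Hence Lemma~\ref{Hal} reduces to $N(f^{p^{n-1}})<N(f^{p^n})$, which is again immediate from strict monotonicity. Thus $p^n\in\HPer(f)$ for \emph{every} prime $p$ and every $n\ge1$, which is in fact stronger than the stated "$p>N$" formulation; the bound $N$ in the proposition is therefore vacuous for this argument and may be taken to be $0$.

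There is essentially no obstacle in the argument; its content lies entirely in observing that strict (as opposed to merely eventual) monotonicity lets the inequality in Lemma~\ref{Hal} succeed for \emph{every} prime $p$ uniformly, which is exactly the strengthening of Corollary~\ref{3.2.51} that underlies both conclusions. The only care needed is citing Lemma~\ref{er} to legitimize the use of Lemma~\ref{Hal} in the infra-solvmanifold setting.
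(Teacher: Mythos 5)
Your argument is correct, and it actually improves on the paper's proof of part (2). Both proofs ultimately appeal to Lemma~\ref{Hal} after invoking essential reducibility (Lemma~\ref{er}), and both verify the same inequality; the difference is in how that inequality is obtained. The paper first observes that strict monotonicity forces $\lambda(f)>1$, then borrows from the proof of Corollary~\ref{3.2.51} the existence of $N$ with $I_k(f)>0$ for $k>N$, and only then specializes to $k=p^n$ with $p>N$. You instead note that for $m=p^n$ the sum in Lemma~\ref{Hal} collapses to the single term $N(f^{p^{n-1}})$ (since the unique prime dividing $p^n$ is $p$), so the criterion is literally $N(f^{p^{n-1}})<N(f^{p^n})$, which strict monotonicity gives for \emph{every} prime $p$ and every $n\ge1$ with no growth estimate needed. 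Your observation that the threshold $N$ can be taken to be $0$ is correct: the paper's statement of part (2) is weaker than what its hypothesis allows, because the detour through $\lambda(f)>1$ and Corollary~\ref{3.2.51} is unnecessary once one sees the Lemma~\ref{Hal} sum collapse. Your proof of part (1) is likewise a direct application of Lemma~\ref{Hal}, whereas the paper reformulates $N(f^p)-N(f)=I_p(f)$ and invokes Corollary~\ref{3.2.50}; the two routes are equivalent, since that corollary is itself essentially the Lemma~\ref{Hal} criterion specialized to prime $m$.
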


\begin{proof}
Observe that for any prime $p$
$$
N(f^p)-\sum_{\frac{p}{k}:\text{ {\rm prime}}} N(f^k)=N(f^p)-N(f)=I_p(f).
$$
The strict monotonicity implies $A_p(f)=pI_p(f)>0$, and hence $p\in\HPer(f)$ by Corollary~\ref{3.2.50}.
This proves (1).

The strict monotonicity of $\{N(f^k)\}$ implies that $\lambda(f)>1$.
Under this assumption, we have shown in the proof of Corollary~\ref{3.2.51} that there exists $N$ such that $k>N\Rightarrow I_k(f)>0$. Let $p$ be a prime $>N$ and $n\in\bbn$. Then
\begin{align*}
&N(f^{p^n})-\sum_{\frac{p^n}{k}:\text{ {\rm prime}}} N(f^k)
=\sum_{i=0}^n I_{p^{i}}(f)-N(f^{p^{n-1}})=I_{p^n}(f)>0.
\end{align*}
By Lemma~\ref{Hal}, we have $p^n\in\HPer(f)$, which proves (2).
\end{proof}

For the set of algebraic periods $A(f)=\{m\in\bbn\mid A_m(f)\ne0\}$,
its lower density $\DA(f)$ was introduced in \cite[Remark~3.1.60]{JM}:
$$
\DA(f)=\liminf_{k\to\infty}\frac{\#(A(f)\cap[1,k])}{k}.
$$
We can consider as well the lower densities of $\Per(f)$ and $\HPer(f)$, see also \cite{Zhao et al}:
\begin{align*}
&\DP(f)=\liminf_{k\to\infty}\frac{\#(\Per(f)\cap[1,k])}{k},\\
&\DH(f)=\liminf_{k\to\infty}\frac{\#(\HPer(f)\cap[1,k])}{k}.
\end{align*}
Since $I_k(f)=\#\EP_k(f)$ by Proposition~\ref{epp}, it follows that $\calA(f)\subset\HPer(f)\subset\Per(f)$. Hence we have $\DA(f)\le\DH(f)\le\DP(f)$.

By Corollary~\ref{BaBo2.9}, when $\lambda(f)>1$, we have a natural number $N$ such that
if $m\ge N$ then there is $\ell$ with $0\le\ell<n(f)$ such that $A_{m+\ell}(f)\ne0$.
This shows that $\DA(f)\ge 1/n(f)$.

On the other hand, by Theorem~\ref{thm:Asymptotic}, we can obtain the following:
If $\lambda(f)=0$ then $N(f^k)=0$ and $A_k(f)=0$ for all $k$, which shows that $\DA(f)=0$.
Consider first Case (2), i.e., the sequence $\{N(f^k)/\lambda(f)^k\}$ is asymptotically
a periodic and nonzero sequence $\{\sum_{j=1}^{n(f)}\rho_je^{2i\pi(k\theta_j)}\}$ of some period $q$.
Now from the identity \eqref{eq:algebraic period} it follows that $\DA(f)\ge1/q$.
Finally consider Case (3). Then the sequence $\{N(f^k)/\lambda(f)^k\}$ asymptotically has
a subsequence $\{\sum_{j\in\calS}\rho_je^{2i\pi(k\theta_j)}\}$ where $\calS=\{j_1,\cdots,j_s\}$
and $\{\theta_{j_1},\cdots,\theta_{j_s},1\}$ is linearly independent over the integers.
Therefore by \cite[Theorem~6, p.~\!91]{Ch}, the sequence $(k\theta_{j_1},\cdots,k\theta_{j_s})$
is uniformly distributed in $T^{|\calS|}$.
From the identity \eqref{eq:algebraic period} it follows that $\DA(f)=1$
(see \cite[Remark~3.1.60]{JM}).

Theorem~\ref{3.2.48} studies the homotopy minimal periods for maps of Case (2) in Theorem~\ref{thm:Asymptotic}.
Now we can state immediately the following result for maps of Case (3) in Theorem~\ref{thm:Asymptotic}.

\begin{Cor}\label{cor:Case 3}
Let $f$ be a map on an infra-solvmanifold of type $\R$.
Suppose that the sequence $\{N(f^k)/\lambda(f)^k\}$
asymptotically contains an interval. Then $\DA(f)=\DH(f)=\DP(f)=1$.
\end{Cor}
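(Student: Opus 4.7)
The plan is to read Corollary~\ref{cor:Case 3} as a direct synthesis of the three paragraphs of discussion that precede it; the substantive technical work has already been done and what remains is to assemble it.

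First, I would record the universal chain $\DA(f)\le\DH(f)\le\DP(f)\le 1$, which holds for every $f$ on an infra-solvmanifold of type $\R$. The key inclusion is $\calA(f)\subset\HPer(f)$. This follows from Proposition~\ref{epp}: the condition $A_k(f)\ne 0$ is equivalent to $I_k(f)=\#\EP_k(f)\ne 0$, so $f$ admits an essential periodic point of minimal period $k$; since Nielsen numbers, and therefore the multiplicities $A_k$ built out of them, are homotopy invariants, the same non-emptiness holds for every $g\simeq f$, which places $k$ in $\HPer(f)$. The remaining inclusion $\HPer(f)\subset\Per(f)$ is by definition, and $\DP(f)\le 1$ is trivial.

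Second, I would invoke the hypothesis directly: the sequence $\{N(f^k)/\lambda(f)^k\}$ asymptotically containing an interval is precisely Case~(3) of Theorem~\ref{thm:Asymptotic}. In that case the paragraph immediately preceding the statement has already established $\DA(f)=1$, by applying Weyl's equidistribution theorem (as cited via \cite[Theorem~6, p.~\!91]{Ch}) to the linearly independent tuple $\{\theta_{j_1},\ldots,\theta_{j_s},1\}$ and feeding the resulting uniform distribution of $(k\theta_{j_1},\ldots,k\theta_{j_s})$ in $T^{|\calS|}$ into the M\"obius inversion identity \eqref{eq:algebraic period}, as in \cite[Remark~3.1.60]{JM}. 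This is the only analytically nontrivial input, and it is imported verbatim.

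Combining these two ingredients squeezes every density to $1$:
\[
1=\DA(f)\le\DH(f)\le\DP(f)\le 1.
\]
Because the harder of the two ingredients --- the equidistribution-based derivation of $\DA(f)=1$ --- has already been carried out in the lead-up to the statement, I do not anticipate any real obstacle; the corollary functions as a bookkeeping step that converts a density statement about algebraic periods into one about homotopy minimal periods and genuine minimal periods, using only Proposition~\ref{epp} and the homotopy invariance of $N(f^k)$.
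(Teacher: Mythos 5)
Your proposal matches the paper's approach exactly: the paper states this corollary ``immediately'' from the two facts it has just established in the preceding paragraphs, namely (a) the chain $\DA(f)\le\DH(f)\le\DP(f)\le 1$ coming from $A(f)\subset\HPer(f)\subset\Per(f)$ via Proposition~\ref{epp}, and (b) $\DA(f)=1$ in Case (3) of Theorem~\ref{thm:Asymptotic} via Weyl equidistribution and the identity \eqref{eq:algebraic period}. Your assembly of these two ingredients into the squeeze $1=\DA(f)\le\DH(f)\le\DP(f)\le 1$ is exactly the intended argument; one could remark that the inclusion $A(f)\subset\HPer(f)$ quietly relies on essential reducibility (Lemma~\ref{er}) to promote a nonempty $\EP_k$ to an actual homotopy minimal period, but the paper elides this in the same way you do.
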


\begin{Cor}\label{cofinite}
Let $f$ be a map on an infra-solvmanifold of type $\R$. Suppose that the sequence $\{N(f^k)\}$ is unbounded and eventually  monotone increasing. Then $\HPer(f)$ is cofinite and $\DA(f)=\DH(f)=\DP(f)=1$.
\end{Cor}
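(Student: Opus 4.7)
The plan is to derive the key inequality of Lemma~\ref{Hal} for all sufficiently large $m$, which together with essential reducibility (Lemma~\ref{er}) will force $m\in\HPer(f)$.

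First, I would unpack the hypotheses. By Theorem~\ref{thm:Radius1}, unboundedness of $\{N(f^k)\}$ is equivalent to $\lambda(f)>1$. Then, arguing exactly as in the proof of Corollary~\ref{3.2.51}, the eventual monotonicity combined with Theorem~\ref{thm:BaBo2.7} produces a constant $c>0$ and a threshold $N_1$ such that
$$
N(f^k)\ge c\,\lambda(f)^k \quad\text{for all }k\ge N_1.
$$
On the other hand, the identity $N(f^k)=\sum_{i=1}^{r(f)}\rho_i\lambda_i^k$ immediately supplies the trivial upper bound $N(f^k)\le C\,\lambda(f)^k$ with $C=\sum_i|\rho_i|$.

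Next, I would estimate the two sides of the inequality in Lemma~\ref{Hal}. For any $m\ge 2$ and any prime $p\mid m$ we have $m/p\le m/2$, so
$$
\sum_{\substack{p\text{ prime}\\ p\mid m}} N(f^{m/p})\le \omega(m)\,C\,\lambda(f)^{m/2},
$$
where $\omega(m)$ denotes the number of distinct prime divisors of $m$. Since $\omega(m)\le\log_2 m$ and $\lambda(f)>1$, this upper bound is $o\!\left(\lambda(f)^m\right)$. Comparing with the lower bound $N(f^m)\ge c\,\lambda(f)^m$ valid for $m\ge N_1$, there exists $N_0\ge N_1$ such that
$$
\sum_{\frac{m}{k}:\text{ prime}} N(f^k)< N(f^m)\quad\text{for every }m>N_0.
$$
By Lemma~\ref{er} every map on $M$ is essentially reducible, so Lemma~\ref{Hal} yields $m\in\HPer(f)$. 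Thus $\HPer(f)$ is cofinite, proving the first assertion.

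The density assertions then follow cheaply. Cofiniteness of $\HPer(f)$ gives $\DH(f)=1$, and since $\HPer(f)\subset\Per(f)$ we obtain $\DP(f)=1$. For the algebraic periods, Proposition~\ref{BaBo2.8} applied with $\epsilon=c$ yields $|I_k(f)|\ge (c/2)\lambda(f)^k>0$ for all $k$ sufficiently large, so $A_k(f)=I_k(f)/k\ne 0$ eventually, $A(f)$ is cofinite, and $\DA(f)=1$. The only step that requires any care is establishing the uniform lower bound $N(f^k)\ge c\,\lambda(f)^k$; once that is in hand, everything after it is a routine comparison of two exponential expressions, the factor $\omega(m)$ being swamped by $\lambda(f)^{m/2}$.
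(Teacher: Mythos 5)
Your proof is correct, and for the cofiniteness of $\HPer(f)$ it takes a genuinely different route from the paper. The paper's proof is very compact: it invokes the argument of Corollary~\ref{3.2.51} (via Proposition~\ref{BaBo2.8}) to get $I_k(f)>0$ for all $k$ large, then uses Proposition~\ref{epp} to conclude $\EP_k(f)\ne\emptyset$, and asserts $k\in\HPer(f)$ from there; the final implication is left implicit (it relies on the homotopy invariance of the fixed-point-class data together with essential reducibility). You instead verify Halpern's criterion (Lemma~\ref{Hal}) directly by the exponential comparison $\sum_{p\mid m}N(f^{m/p})\le\omega(m)\,C\,\lambda(f)^{m/2}=o(\lambda(f)^m)$ against the lower bound $N(f^m)\ge c\,\lambda(f)^m$, and then invoke essential reducibility explicitly via Lemma~\ref{er}. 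This is slightly longer but arguably more self-contained, since it makes the ``hence $k\in\HPer(f)$'' step transparent rather than implicit. Both approaches rest on the same foundation: the lower bound $N(f^k)\ge c\,\lambda(f)^k$ obtained from Theorem~\ref{thm:BaBo2.7} combined with eventual monotonicity, as in Corollary~\ref{3.2.51}. Your treatment of the density statements ($\DH(f)=\DP(f)=1$ from cofiniteness, and $\DA(f)=1$ via Proposition~\ref{BaBo2.8} applied with $\epsilon=c$) matches the paper's reasoning. One cosmetic remark: in reproducing the estimate from Corollary~\ref{3.2.51} one should keep track that Theorem~\ref{thm:BaBo2.7} produces an $\ell$ in $\{0,\dots,n(f)-1\}$ (and $n(f)\le r(f)$), but this does not affect the exponential bound you use.
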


\begin{proof}
Under the same assumption, we have shown in the proof of Corollary~\ref{3.2.51} that there exists $N$ such that if $k>N$ then $I_k(f)>0$. This means $\EP_k(f)$ is nonempty by Proposition~\ref{epp} and hence $k\in\HPer(f)$.
\end{proof}

Now we can prove the main result of \cite{LZ-jmsj}.
\begin{Cor}[{\cite[Theorem~4.6]{LL-JGP}, \cite[Theorem~3.2]{LZ-jmsj}}]
Let $f$ be an expanding map on an infra-nilmanifold. Then $\HPer(f)$ is cofinite.
\end{Cor}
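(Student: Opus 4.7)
The plan is to deduce this corollary directly from Corollary~\ref{cofinite}, which asserts that an unbounded, eventually monotone increasing sequence of Nielsen numbers on an infra-solvmanifold of type $\R$ forces $\HPer(f)$ to be cofinite. The first step is to observe that every nilpotent Lie group is of type $\R$, since $\ad X$ is nilpotent (hence has only zero as an eigenvalue, which is a fortiori real) for every element $X$ of a nilpotent Lie algebra. Consequently every infra-nilmanifold is an infra-solvmanifold of type $\R$ and all the machinery developed in the paper is available.

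Next I would invoke the expansion hypothesis to control the linearization. Fix an affine homotopy lift $(d,D)$ of $f$; expansion translates into the condition that every eigenvalue $\mu_j$ of $D_*$ satisfies $|\mu_j|>1$, so $|\det D_*|>1$ and $D_*^{-k}\to 0$ as $k\to\infty$. For each $A\in\Phi$ a direct manipulation gives
$$
|\det(I-A_*D_*^k)| = |\det D_*|^k \cdot |\det(I-A_*^{-1}D_*^{-k})|,
$$
using $|\det A_*|=1$ because $A_*$ has finite order. Since $\Phi$ is finite and $D_*^{-k}\to 0$, the last factor tends to $1$ uniformly in $A$. Substituting into the averaging formula yields the asymptotic $N(f^k)\sim |\det D_*|^k$ as $k\to\infty$, which in particular shows that $\{N(f^k)\}$ is unbounded.

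Finally, I would extract eventual strict monotonicity from the same asymptotic: $N(f^{k+1})/N(f^k)\to |\det D_*|>1$, so $N(f^{k+1})>N(f^k)$ for all sufficiently large $k$. With both hypotheses of Corollary~\ref{cofinite} verified, the cofiniteness of $\HPer(f)$ follows immediately. I do not foresee a genuine obstacle here: the only technical ingredient is the eigenvalue estimate above, which is a direct consequence of $|\mu_j|>1$ and the finiteness of the holonomy group $\Phi$, and all of the real dynamical content has already been absorbed into Corollary~\ref{cofinite}.
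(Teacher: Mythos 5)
Your argument is correct, and it takes a genuinely different (and in fact cleaner) route than the paper's. The paper deduces eventual monotonicity from the decomposition $N(f^k)=\Gamma(f^k)+\Omega(f^k)$ coming from the rational expression of $N_f(z)$, asserting that $\Omega(f^k)\to0$ while $\Gamma(f^k)\to\infty$. That literal claim is too strong: for $D_*=\mathrm{diag}(2,3)$ on $T^2$ one has $N(f^k)=6^k-3^k-2^k+1$, so $\lambda(f)=6$, $\Gamma(f^k)=6^k$ and $\Omega(f^k)=-3^k-2^k+1\to-\infty$; what actually holds is only $\Omega(f^k)/\lambda(f)^k\to0$, and extracting eventual monotonicity from that decomposition would need an extra step identifying the dominant exponential. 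You sidestep this entirely by working directly with the averaging formula: the identity
$$
|\det(I-A_*D_*^k)|=|\det A_*|\,|\det D_*|^k\,\bigl|\det\bigl(I-A_*^{-1}D_*^{-k}\bigr)\bigr|,
$$
together with $|\det A_*|=1$ and $D_*^{-k}\to0$ (uniformly over the finite holonomy group $\Phi$), gives the precise asymptotic $N(f^k)\sim|\det D_*|^k$, from which both unboundedness and eventual strict monotonicity ($N(f^{k+1})/N(f^k)\to|\det D_*|>1$) are immediate. The preliminary observation that nilpotent implies type $\R$ (since $\ad X$ is nilpotent and hence has only the real eigenvalue $0$) is correct and does need to be said, since the statement is about infra-nilmanifolds while the paper's machinery is stated for infra-solvmanifolds of type $\R$. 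In short, you reach the same endpoint, Corollary~\ref{cofinite}, but by a more elementary and more airtight estimate than the zeta-function decomposition the paper uses.
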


\begin{proof}
Since $f$ is expanding, we have that $\lambda(f)=\sp(\bigwedge D_*)>1$.
For any $k>0$, we can write as before $N(f^k)=\Gamma(f^k)+\Omega(f^k)$
so that $\Omega(f^k)\to0$ and $\Gamma(f^k)\to\infty$ as $k\to\infty$.
This implies that $N(f^k)$ is eventually monotone increasing.
The assertion follows from Corollary~\ref{cofinite}.
\end{proof}

\end{document}